\documentclass[12pt,english]{extarticle}
\usepackage[osf]{mathpazo}

\usepackage[T1]{fontenc}
\usepackage[latin9]{inputenc}
\usepackage[paperwidth=30cm,paperheight=35cm]{geometry}
\geometry{verbose,tmargin=2cm,bmargin=2cm}
\setlength{\parindent}{0bp}
\usepackage{amsmath}
\usepackage{amssymb}

\makeatletter

\providecommand{\tabularnewline}{\\}

\@ifundefined{date}{}{\date{}}
\usepackage{tikz}
\usetikzlibrary{matrix,arrows,decorations.pathmorphing}
\usetikzlibrary{shapes.geometric}
\usepackage{tikz-cd}
\usepackage{amsthm}
\usepackage{xparse,etoolbox}

\theoremstyle{plain}
\newtheorem{theorem}{Theorem}[section]
\newtheorem{lemma}[theorem]{Lemma}
\newtheorem{prop}{Proposition}[section]
\newtheorem{cor}{Corollary}
\newtheorem{conj}{Conjecture}
\theoremstyle{definition}
\newtheorem{defn}{Definition}[section]

\newtheorem{example}{Example}[section]
\theoremstyle{remark}
\newtheorem{rem}{Remark}

\usepackage{graphicx}
\usepackage{amssymb}
\usepackage{tikz-cd}
\usetikzlibrary{calc,arrows,decorations.pathreplacing}
\tikzset{mydot/.style={circle,fill,inner sep=1.5pt},
commutative diagrams/.cd,
  arrow style=tikz,
  diagrams={>={Straight Barb[scale=0.8]}},
}

\usepackage{babel}
\usepackage{hyperref}
\hypersetup{
    colorlinks,
    citecolor=blue,
    filecolor=blue,
    linkcolor=blue,
    urlcolor=blue
}
\usepackage{pgfplots}
\usetikzlibrary{decorations.markings}
\pgfplotsset{compat=1.9}

\newcommand{\blocktheorem}[1]{%
  \csletcs{old#1}{#1}
  \csletcs{endold#1}{end#1}
  \RenewDocumentEnvironment{#1}{o}
    {\par\addvspace{1.5ex}
     \noindent\begin{minipage}{\textwidth}
     \IfNoValueTF{##1}
       {\csuse{old#1}}
       {\csuse{old#1}[##1]}}
    {\csuse{endold#1}
     \end{minipage}
     \par\addvspace{1.5ex}}
}

\raggedbottom

\blocktheorem{theorem}
\blocktheorem{defn}
\blocktheorem{lemma}
\blocktheorem{rem}
\blocktheorem{cor}
\blocktheorem{prop}

\makeatletter
\newcommand*{\@old@slash}{}\let\@old@slash\slash
\def\slash{\relax\ifmmode\delimiter"502F30E\mathopen{}\else\@old@slash\fi}
\makeatother

\def\backslash{\delimiter"526E30F\mathopen{}}

\usepackage[bottom]{footmisc}

\makeatother

\usepackage{babel}
\usepackage{listings}

\begin{document}
\title{Homological Associativity of Differential Graded Algebras and Gröbner
Bases}
\author{Michael Nelson}
\maketitle
\begin{abstract}
We investigate associativity of multiplications on chain complexes
over commutative noetherian rings from two perspectives. First, we
introduce a natural associator subcomplex and show how its homology
can detect associativity. Second, we use Gröbner bases to compute
associators.
\end{abstract}

\section{Introduction}

In this paper, we study algebraic structures that we can attach to
free resolutions. Our motivation is the following: let $(R,\mathfrak{m},\Bbbk)$
be a local (or standard graded) commutative noetherian ring, let $I\subseteq\mathfrak{m}$
be an ideal of $R$, and let $F=(F,\mathrm{d})$ be the minimal free
resolution of $R\slash I$ over $R$. The usual multiplication map
$R\slash I\otimes_{R}R\slash I\to R\slash I$ can be lifted to a chain
map $\mu\colon F\otimes_{R}F\to F$ defined by $a_{1}\otimes a_{2}\mapsto a_{1}\star_{\mu}a_{2}$
where $a_{1},a_{2}\in F$ (we simplify notation to $a_{1}\star_{\mu}a_{2}=a_{1}a_{2}$
whenever $\mu$ is clear from context). Furthermore, we can choose
$\mu$ to be unital (with $1\in F_{0}=R$ being the identify element)
and strictly graded-commutative; see Definition~(\ref{defnpropertiestobesatisfied}).
In this case we call $\mu$ a multiplication on $F$, and when we
equip $F$ with this multiplication, we say $F$ is an MDG algebra
(the ``M'' in ``MDG'' stands for multiplication which we always
require to be strictly graded-commutative and unital though not necessarily
associative). It was first shown that $F$ always possesses an MDG
algebra structure by Buchsbaum and Eisenbud in \cite{BE77}, and in
that paper they posed the following question:

\hfill

\textbf{Question 1.1: }Does $F$ possess the structure of a DG algebra?
In other words, can $\mu$ be chosen such that it is also associative?

\hfill

~~~~~~One reason this question is interesting is that when we
know the answer is ``yes'', then we gain a lot of information about
the shape of $F$. For instance, Buchsbaum and Eisenbud proved that
if we further assume $R$ is a domain and we know that an associative
multiplication on $F$ exists, then one obtains important lower bounds
of the Betti numbers $\beta_{i}=\beta_{i}^{R}(R\slash I)$. In particular,
let $\boldsymbol{t}=t_{1},\dots,t_{g}$ be a maximal $R$-sequence
contained in $I$ and let $E$ be the Koszul algebra which resolves
$R\slash\boldsymbol{t}$ over $R$. Any expression of the $t_{i}$
in terms of the generators for $I$ yields a canonical comparison
map $E\to F$. Buchsbaum and Eisenbud showed that under these assumptions,
this comparison map $E\to F$ is injective, hence we get the lower
bound $\beta_{i}\geq{g \choose i}$ for each $i\leq g$. Unfortunately,
it turns out that the answer to Question 1.1 is that $F$ need not
have a DG algebra structure on it (see \cite{Avr81,Kat19,Sri92} for
counterexamples), so Buchsbaum and Eisenbud's proof of these lower
bounds would fail in these cases. Nonetheless, these lower bounds
are still conjectured to hold. It is known as the (local) Buchsbaum-Eisenbud-Horrocks
(BEH) conjecture (see \cite{Erm10,VW23,Wal17} for more on this topic): 

\hfill

\begin{conj}\label{behconjecture} (BEH Conjecture). Let $M$ be a
nonzero $R$-module of finite projective dimension. Then we have
\[
\beta_{i}(M)\geq{\mathrm{codim}\,M \choose i}
\]
for all $i,$ where $\beta_{i}(M)$ is the $i$th Betti number of
$M$ and where $\mathrm{codim}\,M=\mathrm{height}(\mathrm{Ann}\,M)$.
\end{conj}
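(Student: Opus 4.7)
Since the BEH conjecture is a long-standing open problem, rather than a complete proof I will outline how the machinery of this paper is positioned to attack it, building on Buchsbaum and Eisenbud's strategy from the associative case. By the standard reductions (localize at a prime in the support of $M$ where the codimension is achieved, replace $R$ by $R/\mathrm{Ann}\,M$, and take $I = \mathrm{Ann}\,M$), it suffices to prove $\beta_i^R(R/I) \geq \binom{g}{i}$ where $g = \mathrm{grade}\,I$. Choose a maximal $R$-sequence $\boldsymbol{t} = t_1, \dots, t_g$ contained in $I$, let $E$ be the Koszul complex resolving $R/\boldsymbol{t}$, and let $F$ be the minimal free resolution of $R/I$ equipped with an MDG multiplication $\mu$, which exists by \cite{BE77}. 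Lift each $t_i$ to an element $\tilde{t}_i \in F_1$ and define $\varphi \colon E \to F$ on a basic wedge $e_{i_1} \wedge \cdots \wedge e_{i_k}$ by the iterated product $\tilde{t}_{i_1} \star_\mu \cdots \star_\mu \tilde{t}_{i_k}$ with a fixed bracketing convention. If $\varphi$ is shown to be injective, the lower bound on Betti numbers follows by a rank count in each homological degree.

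When $\mu$ is associative, injectivity of $\varphi$ is Buchsbaum and Eisenbud's theorem. Without associativity, the discrepancy between different bracketings of $\varphi(e_{i_1} \wedge \cdots \wedge e_{i_k})$ is controlled by the associator $\alpha_\mu(a,b,c) = (ab)c - a(bc)$, and this is where the associator subcomplex introduced in the paper enters: its homology detects obstructions to coherent associativity at the chain level. The plan is to argue that the class of $\varphi(e_{i_1} \wedge \cdots \wedge e_{i_k})$ in $F$, viewed modulo associator boundaries, is well-defined and nonzero. Concretely, one would either show that the associator subcomplex is acyclic in the bidegrees touched by $\varphi$, or that its nontrivial homology classes pair trivially with cycles of $E$ coming from exterior-algebra generators, so that $\varphi$ still induces an injection on homology. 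The Gröbner basis methods developed later in the paper provide a computational engine for making such associator computations effective in concrete families of examples.

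The main obstacle is, of course, that no one has yet carried out this homological injectivity argument in the generality demanded by BEH. Strict acyclicity of the associator subcomplex would give a homotopical form of associativity strong enough to imply the conjecture, but the known counterexamples to Question 1.1 warn against expecting such acyclicity in general. My expectation is that the proposal above succeeds unconditionally only in situations where either associativity can be repaired up to homotopy (for instance via an $A_\infty$ refinement of $\mu$) or where the Gröbner basis calculations verify the vanishing of the relevant associator homology for a concrete class of ideals $I$; real progress on the full conjecture will require a genuinely new input controlling the interaction of the associator subcomplex with the comparison map $\varphi$.
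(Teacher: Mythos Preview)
The statement you were asked to prove is labeled \emph{Conjecture} in the paper, and indeed the paper does not prove it; the BEH conjecture is presented only as motivation. You correctly recognized this and offered a strategic outline rather than a proof, so in that sense your response is appropriate and there is no ``paper's own proof'' to compare against.

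That said, a few points in your outline deserve correction. First, your reduction step is not right: replacing $R$ by $R/\mathrm{Ann}\,M$ does not obviously preserve finite projective dimension, and Betti numbers are not invariant under this operation; the standard move is rather to reduce to the case where $M$ is cyclic via filtrations or to work directly with $M$. Second, your proposed definition of $\varphi$ by iterated $\mu$-products with a fixed bracketing will generally fail to be a chain map when $\mu$ is not associative: the Leibniz rule for $\mathrm{d}$ applied to $(\tilde t_{i_1}\star\cdots)\star\tilde t_{i_k}$ does not reproduce the Koszul differential unless the associators vanish. The paper's stated strategy is different in emphasis: rather than defining $\varphi$ by products and hoping associators cancel, one seeks a comparison chain map $\varphi\colon E\to F$ (or more generally $G\to F$ with $G$ a Taylor-type resolution) that happens to be multiplicative with respect to some choice of $\mu$, and then Buchsbaum--Eisenbud's injectivity argument goes through verbatim. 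Your discussion of the associator subcomplex as an obstruction theory is in the right spirit, but the paper does not claim that controlling $\mathrm{H}\langle F\rangle$ suffices for BEH; the connection remains heuristic.
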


\hfill

~~~~~~One of the starting points for this paper is based on
the observation that by slightly modifying Buchsbaum and Eisenbud's
proof one can still obtain these lower bounds even in cases where
it is known that we cannot choose $\mu$ to be associative. Indeed,
we just need to find a multiplication $\mu$ on $F$ together with
a comparison map $\varphi\colon E\to F$ such that $\varphi\colon E\to F$
is multiplicative, meaning
\[
\varphi(a_{1}a_{2})=\varphi(a_{1})\varphi(a_{2})
\]
for all $a_{1},a_{2}\in E$. The proof given by Buchsbaum and Eisenbud
which shows $\varphi\colon E\to F$ is injective would still apply
in this case. Furthermore, in their proof, Buchsbaum and Eisenbud
used a property that the Koszul algebra $E$ satisfies, namely that
every nonzero DG ideal of $E$ intersects the top degree $E_{g}$
non-trivially. However there are many other MDG algebras which satisfy
this property as well (the property being that every nonzero MDG ideal
intersect the top degree non-trivially). In particular, Taylor algebras
satisfy this property. Thus one can generalize this further by replacing
$\boldsymbol{t}$ with an ideal $J$ such that $\boldsymbol{t}\subseteq J\subseteq I$
and such that there exists a multiplication on the minimal free resolution
$G$ of $R\slash J$ over $R$ which satisfies this property. To see
that we really do gain a new perspective here, we consider Example~(\ref{examplemultiplicative})
where it is known that we cannot choose an associative multiplication
$\mu$ on $F$ yet we can find a multiplicative map $T\to F$ where
$T$ is a Taylor algebra resolution. In general, we would like to
choose a multiplication which is as associative as possible. To this
end, we pose the following question:

\hfill

\textbf{Question 1.2: }Equip $F$ with a multiplication $\mu$ giving
it the structure of an MDG algebra. How can we measure the failure
of $F$ to being associative? 

\hfill

~~~~~~We answer this question 1.2 in by studying the maximal
associative quotient of $F$. In short, in Subsection 3.1, we define
the\textbf{ }associator\textbf{ }submodule of an MDG module $X$ over
an MDG algebra $A$ to be the smallest MDG $A$-submodule containing
all ``associators'' of $X$:
\[
\langle X\rangle=\langle\{(a_{1}a_{2})x-a_{1}(a_{2}x)\mid a_{1},a_{2}\in A\text{ and }x\in X\}\rangle\subseteq X.
\]
It is clear that if $X$ is associative, then $\mathrm{H}(\langle X\rangle)=0$.
The first main result of this paper Theorem~(\ref{theoremhomologyassociator})
shows that the converse holds under certain conditions. The second
main result of this paper is Theorem~(\ref{theoremsecond}) where
we show that every multiplication on a resolution considered in \cite{BE77}
will be non-associative at a particular triple. Note that Avramov
had already shown that no associative multiplication on this resolution
exists, however it seems somewhat surprising that one cannot choose
a multiplication which can be made associative at one triple at the
possible expense of being non-associative at some other triple. The
technique we used in proving this made use of a particularly nice
MDG algebra which is described in Example~(\ref{example2}). In Subsection
4.1, we exploit a criterion for exactness. We apply this criterion
in our third main result, Theorem~(\ref{theorem3.1}) to demonstrate
associativity of exterior extensions. In the final section of this
paper, we construct the symmetric DG algebra of an $R$-complex $A$
which is centered at $R$ (meaning $A_{0}=R$ and $A_{i}=0$ for all
$i<0$), denoted by $\mathrm{S}_{R}(A)=S$. This section contains
our fourth result of the paper, namely Theorem~(\ref{theorempresentationsym}),
which says that if we fix a multiplication $\mu$ on $A$, then the
quotient $A^{\mathrm{as}}:=A\slash\langle A\rangle$ can be presented
as a quotient of $S$ by a DG $S$-ideal $\mathfrak{s}=\mathfrak{s}(\mu)$
which is constructed from $\mu$ in a natural way. In particular,
we can study MDG algebra structures on $A$ by studying certain DG
ideals of $S$. This presentation allows us to use Gröbner bases to
help calculate $A^{\mathrm{as}}$ when working over an integral domain
where we can see how associators naturally arise when performing Buchberger's
algorithm to certain set of polynomials with respect to this monomial
ordering.

\hfill

~~~~~~This paper is organized into five sections, the first
section being this introduction. In the second section, we work over
an arbitrary commutative ring $R$ and we define the category of MDG
$R$-algebras as well as the category of modules over them. Briefly,
an MDG $R$-algebra $A$ is essentially just a DG $R$-algebra except
we don't require the associative rule to hold. Similarly, an MDG $A$-module
$X$ is essentially just a DG $A$-module except we do not require
the associative rule to hold. In the third section, we introduce tools
which help us measure how far away MDG objects are from being DG objects.
In particular, we define the associator\textbf{ }of $X$ to be the
chain map $[\cdot]\colon A\otimes A\otimes X\to X$ defined on elementary
tensors by
\[
[a_{1}\otimes a_{2}\otimes x]=(a_{1}a_{2})x-a_{1}(a_{2}x)=[a_{1},a_{2},x]
\]
for all $a_{1},a_{2}\in A$ and $x\in X$, where we denote by $[\cdot,\cdot,\cdot]\colon A\times A\times X\to X$
to be the unique map corresponding to $[\cdot]$ via the universal
mapping property of tensor products. We set $\langle X\rangle$ to
be the smallest MDG $A$-submodule of $X$ which contains the image
of the associator of $X$. The quotient $X^{\mathrm{as}}:=X\slash\langle X\rangle$
is called the maximal associative quotient\textbf{ }of $X$; it plays
a role analogous to the role of the maximal abelian quotient of a
group. We study the homology of $\langle X\rangle$ as well as the
homology of $X^{\mathrm{as}}$. In this section we also define and
study the multiplicator\textbf{ }of a chain map $\varphi\colon X\to Y$,
where $X$ and $Y$ are MDG $A$-modules. This is the chain map $[\cdot]_{\varphi}\colon A\otimes X\to Y$
defined on elementary tensors by
\[
[a\otimes x]_{\varphi}=\varphi(ax)-a\varphi(x)=[a,x]
\]
for all $a\in A$ and $x\in X$, where we denote by $[\cdot,\cdot]\colon A\times X\to Y$
to be the unique map corresponding to $[\cdot]_{\varphi}$ via the
universal mapping property of tensor products. In the fourth section,
we turn our attention towards the associator functor which takes an
MDG $A$-module $X$ to the MDG $A$-module $\langle X\rangle$ and
takes an MDG $A$-module homomorphism $\varphi\colon X\to Y$ to the
restriction map $\varphi\colon\langle X\rangle\to\langle Y\rangle$.
Under certain conditions, a short exact sequence \begin{equation}\label{diagram2342d3}\begin{tikzcd} 0 \arrow[r] & X \arrow[r,"\varphi "]  & Y \arrow[r," \psi "] & Z  \arrow[r] & 0 \end{tikzcd}\end{equation}
of MDG $A$-modules induces a long exact sequence in homology: \begin{equation}\label{diagram3}\begin{tikzcd}[row sep=40]  && \cdots \arrow[r] \arrow[d, phantom, ""{coordinate, name=Z'}] & \mathrm{H}_{i+1} \langle Z   \rangle \arrow[dll,  swap, rounded corners, to path={ -- ([xshift=2ex]\tikztostart.east) |- (Z') [near end]\tikztonodes -| ([xshift=-2ex]\tikztotarget.west) -- (\tikztotarget)}] \\  & \mathrm{H}_{i} \langle X \rangle \arrow[r] & \mathrm{H}_{i} \langle Y \rangle \arrow[r] \arrow[d, phantom, ""{coordinate, name=Z}] & \mathrm{H}_{i} \langle Z \rangle \arrow[dll,  swap, rounded corners, to path={ -- ([xshift=2ex]\tikztostart.east) |- (Z) [near end]\tikztonodes -| ([xshift=-2ex]\tikztotarget.west) -- (\tikztotarget)}] \\ & \mathrm{H}_{i-1} \langle X \rangle \arrow[r] & \cdots \end{tikzcd}\end{equation}
We end this section with an application of this long exact sequence
to certain exterior extensions. In a future paper, we would like to
assign a finite number to a multiplication $\mu$ on a minimal free
resolution $F$ of a cyclic $R$-module over $R$ where $R$ is a
local noetherian ring. This quantity should measure the failure for
$\mu$ to being associative. We believe studying such exterior extensions
will help us to move closer towards that goal.

\hfill

~~~~~~In the final section of this paper, we construct the symmetric
DG algebra of an $R$-complex $A$ which is centered at $R$ (meaning
$A_{0}=R$ and $A_{i}=0$ for all $i<0$), denoted by $\mathrm{S}_{R}(A)=S$.We
will show that if we fix a multiplication $\mu$ on $A$, then the
maximal associative quotient of $A$ can be presented as a quotient
of $S$ by a DG $S$-ideal $\mathfrak{s}=\mathfrak{s}(\mu)$ which
is constructed from $\mu$ in a completely natural way. This presentation
also has interesting Gröbner basis applications in the case where
$R$ is a domain with fraction field $K$ and $F$ is an MDG $R$-algebra
centered at $R$ such that the underlying graded $R$-module of $F$
is finite and free. Indeed, suppose that
\[
F_{+}=Re_{1}+\cdots+Re_{n}
\]
where $e_{1},\dots,e_{n}$ is an ordered homogeneous basis of $F_{+}$
which is ordered in such a way that if $|e_{i'}|>|e_{i}|$, then $i'>i$,
and let $R[\boldsymbol{e}]=R[e_{1},\dots,e_{n}]$ be the free non-strict
graded-commutative $R$-algebra generated by $e_{1},\dots,e_{n}$.
We will equip $K[\boldsymbol{e}]:=K\otimes_{R}R[\boldsymbol{e}]$
with a specific monomial ordering and show how associators naturally
arise when performing Buchberger's algorithm to a certain set of polynomials
with respect to this monomial ordering. We further demonstrate in
Example~(\ref{example455}) how, with the help of a computer algebra
system like Singular, this monomial ordering can help us find associative
multiplications on minimal free resolutions. For instance, we used
Singular to find an associative multiplication on the minimal free
resolution in Example~(\ref{example4}).

\section{MDG Algebras and MDG Modules}

In this section, we define MDG algebras and MDG modules over them.
We also discuss several examples of them in a multigraded setting.

\subsection{MDG Algebras}

Let $R$ be a commutative ring and let $A=(A,\mathrm{d})$ be an $R$-complex.
We further equip $A$ with a chain map $\mu\colon A\otimes_{R}A\to A$.
We denote by $\star_{\mu}\colon A\times A\to A$ (or more simply by
$\cdot$ if context is clear) to be the unique graded $R$-bilinear
map which corresponds to $\mu$ via the universal mapping property
of tensors products. Thus we have
\[
\mu(a_{1}\otimes a_{2})=a_{1}\star_{\mu}a_{2}=a_{1}a_{2}
\]
for all $a_{1},a_{2}\in A$, where we further simplify the notation
by writing $a_{1}\star_{\mu}a_{2}=a_{1}a_{2}$ when context is clear.
In order to simplify our notation in what follows, we often refer
to the triple $(A,\mathrm{d},\mu)$ via its underlying graded $R$-module
$A$, where we think of $A$ as a graded $R$-module which is equipped
with a differential $\mathrm{d}\colon A\to A$, giving it the structure
of an $R$-complex, and which is further equipped with a chain map
$\mu\colon A\otimes_{R}A\to A$. For instance, if $\mu$ satisfies
a property (such as being associative), then we also say $A$ satisfies
that property. 

\begin{defn}\label{defnpropertiestobesatisfied} With the notation
as above, we make the following definitions:
\begin{enumerate}
\item We say $A$ is \textbf{unital} if there exists $1\in A$ such that
$1a=a=a1$ for all $a\in A$. 
\item We say $A$ is \textbf{graded-commutative }if $a_{1}a_{2}=(-1)^{|a_{1}||a_{2}|}a_{2}a_{1}$
for all homogeneous $a_{1},a_{2}\in A$. 
\item We say $A$ is \textbf{strictly graded-commutative }if it is graded-commutative
and satisfies the additional property that $a^{2}=0$ for all elements
$a\in A$ with $|a|$ odd.
\item We say $A$ is \textbf{associative} if $(a_{1}a_{2})a_{3}=a_{1}(a_{2}a_{3})$
for all for all $a_{1},a_{2},a_{3}\in A$. 
\end{enumerate}
We say $A$ is an \textbf{MDG $R$-algebra} if $A$ is unital and
strictly graded-commutative (thought not necessarily associative)
and in this case we call $\mu$ the \textbf{multiplication }of $A$
(just as we call $\mathrm{d}$ the differential of $A$). Suppose
$B$ is another MDG $R$-algebra and let $\varphi\colon A\to B$ be
a chain map.
\begin{enumerate}
\item We say $\varphi$ is \textbf{unital }if $\varphi(1)=1$.
\item We say $\varphi$ is \textbf{multiplicative }if $\varphi(a_{1}a_{2})=\varphi(a_{1})\varphi(a_{2})$
for all $a_{1},a_{2}\in A$.
\end{enumerate}
We say $\varphi\colon A\to B$ is an \textbf{MDG $R$-algebra homomorphism
}(or more simply just homomorphism if context is clear) if it is both
unital and multiplicative. \end{defn}

\begin{rem}\label{rem} Note in the literature, an \emph{associative
}MDG $R$-algebra is often called a DG $R$-algebra, thus an MDG $R$-algebra
is essentially just a ``not necessarily associative'' DG $R$-algebra.
\end{rem}

\subsubsection{Examples of Multigraded MDG Algebras}

In this subsubsection, we consider six examples of multigraded MDG
algebras. The first two examples were considered in \cite{Kat19}
and \cite{Avr81} respectively and were both shown to be examples
of minimal free resolutions which do not admit DG algebra structures
on them.

\begin{example}\label{example1} Let $R=\Bbbk[x,y,z,w]$, let $\boldsymbol{m}_{\mathrm{K}}=\boldsymbol{m}=x^{2},w^{2},xy,zw,y^{2}z^{2}$
and let $F_{\mathrm{K}}=F$ be the minimal free resolution of $R\slash\boldsymbol{m}$
over $R$. One can visualize $F$ as being supported on the $\boldsymbol{m}$-labeled
simplicial complex below:\begin{center}
\begin{tabular}{cccc}
\begin{tikzpicture}[scale=1]
\draw[fill=gray!20] (0,0) -- (3,-0.5) -- (3.2,1.2)-- (0,0); 
\draw[fill=gray!20] (0,0) -- (1.5,1.5) -- (3.2,1.2)-- (0,0); \draw[] (1.5,1.5) -- (3,-0.5); 
\draw[fill=gray!20] (3,-0.5) -- (5,0.2) -- (3.2,1.2); 
\draw[color=black!100] (1.5,1.5) -- (3,-0.5) -- (5,0.2);
\draw[color=black!100] (0,0) -- (3.2,1.2) -- (5,0.2);

\node[circle, fill=black, inner sep=1pt, label=left:$e_1 $] (a) at (0,0) {}; 
\node[circle, fill=black, inner sep=1pt, label=above:$e_2 $] (b) at (1.5,1.5) {}; 
\node[circle, fill=black, inner sep=1pt, label=below:$e_3 $] (c) at (3,-0.5) {};
\node[circle, fill=black, inner sep=1pt, label=above:$e_4 $] (d) at (3.2,1.2) {}; 
\node[circle, fill=black, inner sep=1pt, label=right:$e_5 $] (e) at (5,0.2) {};

\node[] (x) at (-1,1.3) {$ e_{12} $}; \node[] (x') at (0.8,0.6) {};
\draw[-{Straight Barb[length=3pt,width=3pt]}] (x) edge[out=0, in=130] node[below] {} (x');

\node[] (y) at (6.5,1.3) {$ e_{45} $}; \node[] (y') at (4.1,0.5) {}; 
\draw[-{Straight Barb[length=3pt,width=3pt]}] (y) edge[out=180, in=45] node[below] {} (y');

\node[] (z) at (6.5,-0.7) {$e_{35} $}; \node[] (z') at (4.1,0.05) {};
\draw[-{Straight Barb[length=3pt,width=3pt]}] (z) edge[out=180, in=-45] node[below] {} (z');

\node[] (w) at (4.5,2) {$e_{345} $}; \node[] (w') at (3.8,0) {};
\draw[-{Straight Barb[length=3pt,width=3pt]}] (w) edge[out=-120, in=90] node[below] {} (w');
\end{tikzpicture} &  &  & \begin{tikzpicture}[scale=1]

\draw[fill=gray!20] (0,0) -- (3,-0.5) -- (3.2,1.2)-- (0,0); 
\draw[fill=gray!20] (0,0) -- (1.5,1.5) -- (3.2,1.2)-- (0,0); 
\draw[] (1.5,1.5) -- (3,-0.5); \draw[fill=gray!20] (3,-0.5) -- (5,0.2) -- (3.2,1.2);
\draw[color=black!100] (1.5,1.5) -- (3,-0.5) -- (5,0.2);
\draw[color=black!100] (0,0) -- (3.2,1.2) -- (5,0.2);

\node[circle, fill=black, inner sep=1pt, label=left:$x^2 $] (a) at (0,0) {};
\node[circle, fill=black, inner sep=1pt, label=above:$w^2 $] (b) at (1.5,1.5) {};
\node[circle, fill=black, inner sep=1pt, label=below:$zw $] (c) at (3,-0.5) {};
\node[circle, fill=black, inner sep=1pt, label=above:$xy $] (d) at (3.2,1.2) {}; 
\node[circle, fill=black, inner sep=1pt, label=right:$y^2 z^2 $] (e) at (5,0.2) {};

\node[] (x) at (-1,1.3) {$ x^2 w^2 $}; \node[] (x') at (0.8,0.6) {};
\draw[-{Straight Barb[length=3pt,width=3pt]}] (x) edge[out=0, in=130] node[below] {} (x');

\node[] (y) at (6.5,1.3) {$ x y^2 z^2 $}; \node[] (y') at (4.1,0.5) {}; 
\draw[-{Straight Barb[length=3pt,width=3pt]}] (y) edge[out=180, in=45] node[below] {} (y');

\node[] (z) at (6.5,-0.7) {$ y^2 z^2 w $}; \node[] (z') at (4.1,0.05) {};
\draw[-{Straight Barb[length=3pt,width=3pt]}] (z) edge[out=180, in=-45] node[below] {} (z');

\node[] (w) at (4.5,2) {$ x y^2 z^2 w $}; \node[] (w') at (3.8,0) {}; 
\draw[-{Straight Barb[length=3pt,width=3pt]}] (w) edge[out=-120, in=90] node[below] {} (w');

\end{tikzpicture}\tabularnewline
\end{tabular}
\par\end{center}In particular, the homogeneous components of $F$ as a graded $R$-module
are given by
\begin{align*}
F_{0} & =R\\
F_{1} & =Re_{1}+Re_{2}+Re_{3}+Re_{4}+Re_{5}\\
F_{2} & =Re_{12}+Re_{13}+Re_{14}+Re_{23}+Re_{24}+Re_{34}+Re_{35}+Re_{45}\\
F_{3} & =Re_{123}+Re_{124}+Re_{134}+Re_{234}+Re_{345}\\
F_{4} & =Re_{1234},
\end{align*}
and the differential $\mathrm{d}$ of $F$ behaves just like the usual
boundary map of the simplicial complex above except some monomials
can show up as coefficients (so that the differential respects the
multidegree). For instance, we have
\[
\mathrm{d}(e_{1234})=-ye_{123}+ze_{124}-we_{134}+xe_{234}.
\]
For more details on this construction, see \cite{BPS98}. We now wish
to equip $F$ with a multigraded multiplication $\mu_{\mathrm{K}}=\mu$
giving it the structure of a multigraded MDG algebra. Since $\mu$
respects the multigrading and satisfies Leibniz rule, we are forced
to have:
\begin{alignat*}{3}
e_{1}\star e_{5} & =yz^{2}e_{14}+xe_{45} &  &  & e_{2}\star e_{45} & =-yze_{234}+we_{345}\\
e_{1}\star e_{2} & =e_{12} &  &  & e_{1}\star e_{35} & =yze_{134}-xe_{345}\\
e_{2}\star e_{5} & =y^{2}ze_{23}+we_{35} &  &  & e_{1}\star e_{23} & =e_{123}\\
 &  & \qquad\qquad &  & e_{2}\star e_{14} & =-e_{124}
\end{alignat*}
At this point however, one can conclude that $F$ is not associative
since
\begin{equation}
[e_{1},e_{5},e_{2}]:=(e_{1}\star e_{5})\star e_{2}-e_{1}\star(e_{5}\star e_{2})=-yz\mathrm{d}(e_{1234})\neq0.\label{eq:associatorsingular-1-1-1}
\end{equation}

The multiplication is not uniquely determined on all pairs $(e_{\sigma},e_{\tau})$;
for instance there are two possible ways in which $\mu$ is defined
at the pair $(e_{5},e_{12})$. We assume that $\mu$ is defined at
$(e_{5},e_{12})$ by
\[
e_{5}\star e_{12}=yz^{2}e_{124}+xyze_{234}+xwe_{345}.
\]
Finally, we would still like for $\mu$ to be as associative as possible
even though we already know it is not associative at the triple $(e_{1},e_{5},e_{2})$.
In particular, we want $\mu$ to be associative on all triples of
the form $(e_{\sigma},e_{\sigma},e_{\tau})$. It turns out this can
be done and we will assume that $\mu$ is associative on all such
triples. \end{example}

\begin{example}\label{example2} Let $R=\Bbbk[x,y,z,w]$, let $\boldsymbol{m}_{\mathrm{A}}=\boldsymbol{m}=x^{2},w^{2},zw,xy,yz$
and let $F_{\mathrm{A}}=F$ be the minimal free resolution of $R\slash\boldsymbol{m}$
over $R$. One can visualize $F$ as being supported on the $\boldsymbol{m}$-labeled
cellular complex below: \begin{center}
\begin{tabular}{cccc}
\begin{tikzpicture}[scale=1]

\draw[fill=gray!20] (0,0) -- (3,-0.5) -- (5,0.2) -- (3.2,1.2)-- (0,0); 
\draw[fill=gray!20] (0,0) -- (1.5,1.5) -- (3.2,1.2)-- (0,0); 
\draw[] (1.5,1.5) -- (3,-0.5); \draw[fill=gray!20] (3,-0.5) -- (5,0.2) -- (3.2,1.2);
\draw[color=black!100] (1.5,1.5) -- (3,-0.5) -- (5,0.2);
\draw[color=black!100] (0,0) -- (3.2,1.2) -- (5,0.2);

\node[circle, fill=black, inner sep=1pt, label=left:$\varepsilon _1 $] (a) at (0,0) {};
\node[circle, fill=black, inner sep=1pt, label=above:$\varepsilon _2 $] (b) at (1.5,1.5) {};
\node[circle, fill=black, inner sep=1pt, label=below:$\varepsilon _3 $] (c) at (3,-0.5) {};
\node[circle, fill=black, inner sep=1pt, label=above:$\varepsilon _4 $] (d) at (3.2,1.2) {}; 
\node[circle, fill=black, inner sep=1pt, label=right:$\varepsilon _5 $] (e) at (5,0.2) {};

\node[] (x) at (-1,1.3) {$ \varepsilon _{12} $}; \node[] (x') at (0.8,0.6) {};
\draw[-{Straight Barb[length=3pt,width=3pt]}] (x) edge[out=0, in=130] node[below] {} (x');

\node[] (y) at (6.5,1.3) {$ \varepsilon _{45} $}; \node[] (y') at (4.1,0.5) {}; 
\draw[-{Straight Barb[length=3pt,width=3pt]}] (y) edge[out=180, in=45] node[below] {} (y');

\node[] (z) at (6.5,-0.7) {$ \varepsilon _{35} $}; \node[] (z') at (4.1,0.05) {};
\draw[-{Straight Barb[length=3pt,width=3pt]}] (z) edge[out=180, in=-45] node[below] {} (z');

\node[] (w) at (4.5,2) {$ \varepsilon _{2345} $}; \node[] (w') at (3.2,0) {}; 
\draw[-{Straight Barb[length=3pt,width=3pt]}] (w) edge[out=-120, in=90] node[below] {} (w');

\end{tikzpicture} &  &  & \begin{tikzpicture}[scale=1]

\draw[fill=gray!20] (0,0) -- (3,-0.5) -- (5,0.2) -- (3.2,1.2)-- (0,0); 
\draw[fill=gray!20] (0,0) -- (1.5,1.5) -- (3.2,1.2)-- (0,0); 
\draw[] (1.5,1.5) -- (3,-0.5); \draw[fill=gray!20] (3,-0.5) -- (5,0.2) -- (3.2,1.2);
\draw[color=black!100] (1.5,1.5) -- (3,-0.5) -- (5,0.2);
\draw[color=black!100] (0,0) -- (3.2,1.2) -- (5,0.2);

\node[circle, fill=black, inner sep=1pt, label=left:$x^2 $] (a) at (0,0) {};
\node[circle, fill=black, inner sep=1pt, label=above:$w^2 $] (b) at (1.5,1.5) {};
\node[circle, fill=black, inner sep=1pt, label=below:$zw $] (c) at (3,-0.5) {};
\node[circle, fill=black, inner sep=1pt, label=above:$xy $] (d) at (3.2,1.2) {}; 
\node[circle, fill=black, inner sep=1pt, label=right:$yz $] (e) at (5,0.2) {};

\node[] (x) at (-1,1.3) {$ x^2 w^2 $}; \node[] (x') at (0.8,0.6) {};
\draw[-{Straight Barb[length=3pt,width=3pt]}] (x) edge[out=0, in=130] node[below] {} (x');

\node[] (y) at (6.5,1.3) {$ x y z $}; \node[] (y') at (4.1,0.5) {}; 
\draw[-{Straight Barb[length=3pt,width=3pt]}] (y) edge[out=180, in=45] node[below] {} (y');

\node[] (z) at (6.5,-0.7) {$ y z w $}; \node[] (z') at (4.1,0.05) {};
\draw[-{Straight Barb[length=3pt,width=3pt]}] (z) edge[out=180, in=-45] node[below] {} (z');

\node[] (w) at (4.5,2) {$ x y z w^2 $}; \node[] (w') at (3.2,0) {}; 
\draw[-{Straight Barb[length=3pt,width=3pt]}] (w) edge[out=-120, in=90] node[below] {} (w');

\end{tikzpicture}\tabularnewline
\end{tabular}
\par\end{center}We write down the homogeneous components of $F$ as a graded $R$-module
below:
\begin{align*}
F_{0} & =R\\
F_{1} & =R\varepsilon_{1}+R\varepsilon_{2}+R\varepsilon_{3}+R\varepsilon_{4}+R\varepsilon_{5}\\
F_{2} & =R\varepsilon_{12}+R\varepsilon_{13}+R\varepsilon_{14}+R\varepsilon_{23}+R\varepsilon_{24}+R\varepsilon_{35}+R\varepsilon_{45}\\
F_{3} & =R\varepsilon_{123}+R\varepsilon_{124}+R\varepsilon_{1345}+R\varepsilon_{2345}\\
F_{4} & =R\varepsilon_{12345}
\end{align*}
The differential $\mathrm{d}_{\mathrm{A}}=\mathrm{d}$ is defined
on the non-simplicial faces as below
\begin{align*}
\mathrm{d}(\varepsilon_{12345}) & =x\varepsilon_{2345}-z\varepsilon_{124}+w\varepsilon_{1345}-y\varepsilon_{123}\\
\mathrm{d}(\varepsilon_{1345}) & =x^{2}\varepsilon_{35}-xw\varepsilon_{45}-zw\varepsilon_{14}+y\varepsilon_{13}\\
\mathrm{d}(\varepsilon_{2345}) & =xw\varepsilon_{35}-w^{2}\varepsilon_{45}-z\varepsilon_{24}+xy\varepsilon_{23}.
\end{align*}
We obtain a multiplication $\mu_{\mathrm{A}}$ on $F_{\mathrm{A}}$
from the one we constructed on $F_{\mathrm{K}}$ as follows: first
note that the canonical map $R\slash\boldsymbol{m}_{\mathrm{K}}\to R\slash\boldsymbol{m}_{\mathrm{A}}$
induces a multigraded comparison map $\pi\colon F_{\mathrm{K}}\to F_{\mathrm{A}}$
defined by
\begin{align*}
\pi(e_{5}) & =yz\varepsilon_{5} & \pi(e_{345}) & =0\\
\pi(e_{35}) & =yz\varepsilon_{35} & \pi(e_{234}) & =\varepsilon_{2345}\\
\pi(e_{45}) & =yz\varepsilon_{45} & \pi(e_{134}) & =\varepsilon_{1345}\\
\pi(e_{34}) & =x\varepsilon_{35}-w\varepsilon_{45} & \pi(e_{1234}) & =\varepsilon_{12345}
\end{align*}
and $\pi(e_{\sigma})=\varepsilon_{\sigma}$ for the remaining homogeneous
basis elements. Base changing to $R_{yz}$, we obtain quasi-isomorphisms
$F_{\mathrm{A},yz}\to0\leftarrow F_{\mathrm{K},yz}$. In particular,
there exists a comparison map $\iota\colon F_{\mathrm{A},yz}\to F_{\mathrm{K},yz}$
which splits comparison map $\pi\colon F_{\mathrm{K},yz}\to F_{\mathrm{A},yz}$.
By considering the multigrading as well as the Leibniz rule, we see
that
\begin{align*}
\iota(\varepsilon_{5}) & =e_{5}/yz & \iota(\varepsilon_{2345}) & =-e_{234}+e_{345}/yz\\
\iota(\varepsilon_{35}) & =e_{35}/yz & \iota(\varepsilon_{1345}) & =e_{134}-e_{345}/yz\\
\iota(\varepsilon_{45}) & =e_{45}/yz & \iota(\varepsilon_{12345}) & =e_{1234}
\end{align*}
and $\iota(\varepsilon_{\sigma})=e_{\sigma}$ for the remaining homogeneous
basis elements. With this in mind, we define a multiplication $\mu_{\mathrm{A}}$
on $F_{\mathrm{A}}$ by transporting the multiplication $\mu_{\mathrm{K}}$
on $F_{\mathrm{K},yz}$ by setting $\mu_{\mathrm{A}}:=\pi\mu_{\mathrm{K}}\iota^{\otimes2}$.
In other words, we have
\begin{equation}
\varepsilon_{\sigma}\star_{\mu_{\mathrm{A}}}\varepsilon_{\tau}=\pi(\iota(\varepsilon_{\sigma})\star_{\mu_{\mathrm{K}}}\iota(\varepsilon_{\tau}))\label{eq:avmult}
\end{equation}
for all homogeneous basis elements $\varepsilon_{\sigma},\varepsilon_{\tau}$
of $F_{\mathrm{A},yz}$. It is straightforward to check that $\mu_{\mathrm{A}}$
restricts to a multiplication on $F_{\mathrm{A}}$ (the coefficients
in (\ref{eq:avmult}) are in $R$). Note that $\mu_{\mathrm{A}}$
is not associative since
\[
[\varepsilon_{1},\varepsilon_{5},\varepsilon_{2}]=-\mathrm{d}(\varepsilon_{12345})\neq0.
\]
\end{example}

\begin{example}\label{example3} Let $R=\Bbbk[x,y,z,w]$, let $\boldsymbol{m}_{\mathrm{M}}=\boldsymbol{m}=x^{2},w^{2},zw,xy,y^{2}z,yz^{2}$
and let $F_{\mathrm{M}}=F$ be the minimal free resolution of $R\slash\boldsymbol{m}$
of $R$. One can visualize $F$ as being supported on the $\boldsymbol{m}$-labeled
simplicial complex below: \begin{center}
\begin{tabular}{ccccc}
\begin{tikzpicture}[scale=1]
\draw[fill=gray!20] (0,0) -- (3,-0.5) -- (3.2,1.2)-- (0,0); 
\draw[fill=gray!20] (0,0) -- (1.5,1.5) -- (3.2,1.2)-- (0,0); 
\draw[] (1.5,1.5) -- (3,-0.5);

\node[circle, fill=black, inner sep=1pt, label=left:$\epsilon _1 $] (e1) at (0,0) {}; 
\node[circle, fill=black, inner sep=1pt, label=above:$\epsilon _2 $] (e2) at (1.5,1.5) {}; 
\node[circle, fill=black, inner sep=1pt, label=below:$\epsilon _3 $] (e3) at (3,-0.5) {};
\node[circle, fill=black, inner sep=1pt, label=above:$\epsilon _4 $] (e4) at (3.2,1.2) {}; 
\node[circle, fill=black, inner sep=1pt, label=right:$\epsilon _5 $] (e5) at (6,0) {};
\node[circle, fill=black, inner sep=1pt, label=above:$\epsilon _6 $] (e6) at (4.5,1.5) {};

\draw[fill=gray!20] (3,-0.5) -- (6,0) -- (4.5,1.5) -- (3,-0.5);
\draw[fill=gray!20]  (3.2,1.2) -- (4.5,1.5) -- (3,-0.5) -- (3.2,1.2);

\draw[] (e4) -- (e5);
\draw[] (e3) -- (e6);

\end{tikzpicture} &  &  &  & \begin{tikzpicture}[scale=1]
\draw[fill=gray!20] (0,0) -- (3,-0.5) -- (3.2,1.2)-- (0,0); 
\draw[fill=gray!20] (0,0) -- (1.5,1.5) -- (3.2,1.2)-- (0,0); 
\draw[] (1.5,1.5) -- (3,-0.5);

\node[circle, fill=black, inner sep=1pt, label=left:$x^2 $] (e1) at (0,0) {}; 
\node[circle, fill=black, inner sep=1pt, label=above:$w^2 $] (e2) at (1.5,1.5) {}; 
\node[circle, fill=black, inner sep=1pt, label=below:$zw $] (e3) at (3,-0.5) {};
\node[circle, fill=black, inner sep=1pt, label=above:$xy $] (e4) at (3.2,1.2) {}; 
\node[circle, fill=black, inner sep=1pt, label=right:$y^2 z $] (e5) at (6,0) {};
\node[circle, fill=black, inner sep=1pt, label=above:$y z^2 $] (e6) at (4.5,1.5) {};

\draw[fill=gray!20] (3,-0.5) -- (6,0) -- (4.5,1.5) -- (3,-0.5);
\draw[fill=gray!20]  (3.2,1.2) -- (4.5,1.5) -- (3,-0.5) -- (3.2,1.2);

\draw[] (e4) -- (e5);
\draw[] (e3) -- (e6);

\end{tikzpicture}\tabularnewline
\end{tabular}
\par\end{center}We write down the homogeneous components of $F$ as a graded $R$-module
below:
\begin{align*}
F_{0} & =R\\
F_{1} & =R\epsilon_{1}+R\epsilon_{2}+R\epsilon_{3}+R\epsilon_{4}+R\epsilon_{5}+R\epsilon_{6}\\
F_{2} & =R\epsilon_{12}+R\epsilon_{13}+R\epsilon_{14}+R\epsilon_{23}+R\epsilon_{24}+R\epsilon_{34}+R\epsilon_{35}+R\epsilon_{36}+R\epsilon_{45}+R\epsilon_{46}+R\epsilon_{56}\\
F_{3} & =R\epsilon_{123}+R\epsilon_{124}+R\epsilon_{134}+R\epsilon_{234}+R\epsilon_{345}+R\epsilon_{346}+R\epsilon_{356}+R\epsilon_{456}\\
F_{4} & =R\epsilon_{1234}+R\epsilon_{3456}.
\end{align*}
The canonical map $R\slash\boldsymbol{m}_{\mathrm{K}}\to R\slash\boldsymbol{m}_{\mathrm{M}}$
induces multigraded comparison maps $\pi_{\lambda}\colon F_{\mathrm{K}}\to F_{\mathrm{M}}$
where $\lambda\in\Bbbk$ and where $\pi_{\lambda}$ is defined by
\begin{align*}
\pi_{\lambda}(e_{5}) & =\lambda z\epsilon_{5}+(1-\lambda)y\epsilon_{6}\\
\pi_{\lambda}(e_{35}) & =\lambda z\epsilon_{35}+(1-\lambda)y\epsilon_{36}\\
\pi_{\lambda}(e_{45}) & =\lambda z\epsilon_{45}+(1-\lambda)y\epsilon_{46}\\
\pi_{\lambda}(e_{345}) & =\lambda z\epsilon_{345}+(1-\lambda)y\epsilon_{346}
\end{align*}
and $\pi_{\lambda}(e_{\sigma})=\epsilon_{\sigma}$ for the remaining
homogeneous basis elements. We will choose $\lambda=1$ and view $F_{\mathrm{K}}$
as a subcomplex of $F_{\mathrm{M}}$ via $\pi=\pi_{1}$. We define
a multigraded multiplication $\mu_{\mathrm{M}}$ on $F_{\mathrm{M}}$
so that it extends the multiplication $\mu_{\mathrm{K}}$ on $F_{\mathrm{K}}$.
Considerations of the Leibniz rule and the multigrading tells us that
we are already forced to have:
\begin{align*}
\epsilon_{1}\star\epsilon_{5} & =yz\epsilon_{14}+x\epsilon_{45} &  &  & \epsilon_{1}\star\epsilon_{6} & =z^{2}e_{14}+xe_{46}\\
\epsilon_{2}\star\epsilon_{5} & =y^{2}\epsilon_{23}+w\epsilon_{35} &  &  & \epsilon_{2}\star\epsilon_{6} & =yz\epsilon_{23}+w\epsilon_{36}\\
\epsilon_{2}\star\epsilon_{45} & =-y\epsilon_{234}+w\epsilon_{345} &  &  & \epsilon_{2}\star\epsilon_{46} & =-ze_{234}+w\epsilon_{346}\\
\epsilon_{1}\star\epsilon_{35} & =y\epsilon_{134}-x\epsilon_{345} &  &  & \epsilon_{1}\star\epsilon_{36} & =z\epsilon_{134}-x\epsilon_{346}.
\end{align*}
In particular, $\mu_{\mathrm{K}}$ is not associative (and in fact
any multigraded multiplication on $F_{\mathrm{M}}$ is not associative)
since:
\[
[\epsilon_{1},\epsilon_{5},\epsilon_{2}]=-y\mathrm{d}(\epsilon_{1234})\neq0\quad\text{and}\quad[\epsilon_{1},\epsilon_{6},\epsilon_{2}]=-z\mathrm{d}(\epsilon_{1234})\neq0.
\]
On the other hand, since the multiplication of $F_{\mathrm{M}}$ extends
the multiplication of $F_{\mathrm{K}}$, we see that the comparison
map $F_{\mathrm{K}}\to F_{\mathrm{M}}$ is multiplicative, and hence
$F_{\mathrm{K}}$ is an MDG subalgebra of $F_{\mathrm{M}}$. 

\end{example}

\begin{example}\label{example4} Let $R=\Bbbk[x,y,z,w]$, let $\boldsymbol{m}_{\mathrm{O}}=\boldsymbol{m}=x^{2},w^{2},zw,xy,y^{2},z^{2}$
and let $F_{\mathrm{O}}=F$ be the minimal free resolution of $R\slash\boldsymbol{m}$
over $R$. One can visualize $F$ as being supported on the $\boldsymbol{m}$-labeled
simplicial complex below:\begin{center}
\begin{tabular}{ccccc}
\begin{tikzpicture}

\draw[fill=gray!20] (-1,0) -- (2,-0.6) -- (1.5,2.5)-- (-1,0); 
\draw[fill=gray!20] (2,-0.6)  -- (1.5,2.5) -- (4,-0.1)-- (2,-0.6) ; 
\draw[fill=gray!20] (-1,0) -- (2,-0.6) -- (1.2,-2.5)-- (-1,0); 
\draw[fill=gray!20] (2,-0.6) -- (4,-0.1) -- (1.2,-2.5)-- (2,-0.6); 

\node[circle, fill=black, inner sep=1pt, label=left:$\mathrm{e}_1 $] (e1) at (-1,0) {}; 
\node[circle, fill=black, inner sep=1pt, label=above left :$\mathrm{e}_4 $] (e2) at (1,0.5) {}; 
\node[circle, fill=black, inner sep=1pt, label=below right:$\mathrm{e}_3 $] (e3) at (2,-0.6) {};
\node[circle, fill=black, inner sep=1pt, label=above:$\mathrm{e}_2 $] (e4) at (1.5,2.5) {}; 
\node[circle, fill=black, inner sep=1pt, label=right:$\mathrm{e}_5 $] (e5) at (4,-0.1) {};
\node[circle, fill=black, inner sep=1pt, label=below:$\mathrm{e}_6 $] (e6) at (1.2,-2.5) {};

\draw[] (e1) -- (e2);
\draw[] (e1) -- (e3);
\draw[] (e1) -- (e4);
\draw[] (e3) -- (e6);
\draw[] (e4) -- (e5);
\draw[] (e3) -- (e4);
\draw[] (e2) -- (e5);
\draw[] (e3) -- (e5);
\draw[] (e1) -- (e6);
\draw[] (e5) -- (e6);
\draw[] (e2) -- (e6);
\draw[] (e2) -- (e4);
\draw[] (e3) -- (e2);

\end{tikzpicture} &  &  &  & \begin{tikzpicture}

\draw[fill=gray!20] (-1,0) -- (2,-0.6) -- (1.5,2.5)-- (-1,0); 
\draw[fill=gray!20] (2,-0.6)  -- (1.5,2.5) -- (4,-0.1)-- (2,-0.6) ; 
\draw[fill=gray!20] (-1,0) -- (2,-0.6) -- (1.2,-2.5)-- (-1,0); 
\draw[fill=gray!20] (2,-0.6) -- (4,-0.1) -- (1.2,-2.5)-- (2,-0.6); 

\node[circle, fill=black, inner sep=1pt, label=left:$x^2 $] (e1) at (-1,0) {}; 
\node[circle, fill=black, inner sep=1pt, label=above left :$xy $] (e2) at (1,0.5) {}; 
\node[circle, fill=black, inner sep=1pt, label=below right:$zw $] (e3) at (2,-0.6) {};
\node[circle, fill=black, inner sep=1pt, label=above:$w^2 $] (e4) at (1.5,2.5) {}; 
\node[circle, fill=black, inner sep=1pt, label=right:$y^2 $] (e5) at (4,-0.1) {};
\node[circle, fill=black, inner sep=1pt, label=below:$z^2 $] (e6) at (1.2,-2.5) {};

\draw[] (e1) -- (e2);
\draw[] (e1) -- (e3);
\draw[] (e1) -- (e4);
\draw[] (e3) -- (e6);
\draw[] (e4) -- (e5);
\draw[] (e3) -- (e4);
\draw[] (e2) -- (e5);
\draw[] (e3) -- (e5);
\draw[] (e1) -- (e6);
\draw[] (e5) -- (e6);
\draw[] (e2) -- (e6);
\draw[] (e2) -- (e4);
\draw[] (e3) -- (e2);

\end{tikzpicture}\tabularnewline
\end{tabular}
\par\end{center}We write down the homogeneous components of $F$ as a graded $R$-module
below:
\begin{align*}
F_{0} & =R\\
F_{1} & =R\mathrm{e}_{1}+R\mathrm{e}_{2}+R\mathrm{e}_{3}+R\mathrm{e}_{4}+R\mathrm{e}_{5}+R\mathrm{e}_{6}\\
F_{2} & =R\mathrm{e}_{12}+R\mathrm{e}_{13}+R\mathrm{e}_{14}+R\mathrm{e}_{16}+R\mathrm{e}_{23}+R\mathrm{e}_{24}+R\mathrm{e}_{25}+R\mathrm{e}_{34}+R\mathrm{e}_{35}+R\mathrm{e}_{36}+R\mathrm{e}_{45}+R\mathrm{e}_{46}+R\mathrm{e}_{56}\\
F_{3} & =R\mathrm{e}_{123}+R\mathrm{e}_{124}+R\mathrm{e}_{134}+R\mathrm{e}_{136}+R\mathrm{e}_{146}+R\mathrm{e}_{234}+R\mathrm{e}_{235}+R\mathrm{e}_{245}+R\mathrm{e}_{345}+R\mathrm{e}_{346}+R\mathrm{e}_{356}+R\mathrm{e}_{456}\\
F_{4} & =R\mathrm{e}_{1234}+R\mathrm{e}_{1346}+R\mathrm{e}_{2345}+R\epsilon_{3456}.
\end{align*}
The canonical map $R\slash\boldsymbol{m}_{\mathrm{M}}\to R\slash\boldsymbol{m}_{\mathrm{O}}$
induces an injective multigraded comparison map $F_{\mathrm{M}}\to F_{\mathrm{O}}$
and we identify $F_{\mathrm{M}}$ with this subcomplex of $F_{\mathrm{O}}$.
This time it is not possible extend the multiplication of $F_{\mathrm{M}}$
to a multiplication on $F_{\mathrm{O}}$. Indeed, assuming we could
extend the multiplication, then we'd have
\begin{align*}
z(\mathrm{e}_{2}\star\mathrm{e}_{5}) & =\mathrm{e}_{2}\star(z\mathrm{e}_{5})\\
 & =\epsilon_{2}\star\epsilon_{5}\\
 & =y^{2}\epsilon_{23}+w\epsilon_{35}\\
 & =y^{2}\mathrm{e}_{23}+w\mathrm{e}_{35},
\end{align*}
which would imply $\mathrm{e}_{2}\star\mathrm{e}_{5}=(y^{2}/z)\mathrm{e}_{23}+(w/z)\mathrm{e}_{35}$.
However this is obviously not in $F_{\mathrm{O}}$ since the coefficients
are not in $R$. On the other hand, it turns out that there is a better
choice of a multigraded multiplication on $F_{\mathrm{O}}$ that we
can use anyways: namely $\mathrm{e}_{2}\star\mathrm{e}_{5}=\mathrm{e}_{25}$.
In fact, this is the only possible choice we can make if we want the
multiplication to be multigraded. Similarly, we are forced to have
$\mathrm{e}_{1}\star\mathrm{e}_{6}=\mathrm{e}_{16}$. Using the computer
algebra system Singular, we found that this extends to an \emph{associative}
multigraded multiplication on $F_{\mathrm{O}}$ which has the following
minimal presentation:
\begin{alignat*}{5}
\mathrm{e}_{1}^{2} & =0 &  &  & \mathrm{e}_{2}\star\mathrm{e}_{5} & =\mathrm{e}_{25} &  &  & \mathrm{e}_{2}\star\mathrm{e}_{16} & =-z\mathrm{e}_{123}-w\mathrm{e}_{136}\\
\mathrm{e}_{2}^{2} & =0 &  &  & \mathrm{e}_{2}\star\mathrm{e}_{6} & =z\mathrm{e}_{23}+w\mathrm{e}_{36} &  &  & \mathrm{e}_{2}\star\mathrm{e}_{46} & =\mathrm{e}_{234}+\mathrm{e}_{346}\\
\mathrm{e}_{3}^{2} & =0 &  &  & \mathrm{e}_{3}\star\mathrm{e}_{4} & =\mathrm{e}_{34} &  &  & \mathrm{e}_{2}\star\mathrm{e}_{56} & =-z\mathrm{e}_{235}+w\mathrm{e}_{356}\\
\mathrm{e}_{4}^{2} & =0 &  &  & \mathrm{e}_{3}\star\mathrm{e}_{5} & =\mathrm{e}_{35} &  &  & \mathrm{e}_{3}\star\mathrm{e}_{45} & =\mathrm{e}_{345}\\
\mathrm{e}_{5}^{2} & =0 &  &  & \mathrm{e}_{3}\star\mathrm{e}_{6} & =z\mathrm{e}_{36} &  &  & \mathrm{e}_{5}\star\mathrm{e}_{24} & =y\mathrm{e}_{245}\\
\mathrm{e}_{6}^{2} & =0 &  &  & \mathrm{e}_{4}\star\mathrm{e}_{5} & =y\mathrm{e}_{45} &  &  & \mathrm{e}_{6}\star\mathrm{e}_{13} & =z\mathrm{e}_{136}\\
\mathrm{e}_{1}\star\mathrm{e}_{2} & =\mathrm{e}_{12} & \qquad\qquad &  & \mathrm{e}_{4}\star\mathrm{e}_{6} & =\mathrm{e}_{46} & \qquad\qquad &  & \mathrm{e}_{6}\star\mathrm{e}_{34} & =z\mathrm{e}_{346}\\
\mathrm{e}_{1}\star\mathrm{e}_{3} & =\mathrm{e}_{13} &  &  & \mathrm{e}_{5}\star\mathrm{e}_{6} & =\mathrm{e}_{56} &  &  & \mathrm{e}_{6}\star\mathrm{e}_{35} & =z\mathrm{e}_{356}\\
\mathrm{e}_{1}\star\mathrm{e}_{4} & =x\mathrm{e}_{14} &  &  & \mathrm{e}_{1}\star\mathrm{e}_{25} & =y\mathrm{e}_{124}-x\mathrm{e}_{245} &  &  & \mathrm{e}_{6}\star\mathrm{e}_{45} & =\mathrm{e}_{456}\\
\mathrm{e}_{1}\star\mathrm{e}_{5} & =y\mathrm{e}_{14}+x\mathrm{e}_{45} &  &  & \mathrm{e}_{1}\star\mathrm{e}_{35} & =y\mathrm{e}_{134}-x\mathrm{e}_{345} &  &  & \mathrm{e}_{1}\star\mathrm{e}_{235} & =y\mathrm{e}_{1234}+x\mathrm{e}_{2345}\\
\mathrm{e}_{1}\star\mathrm{e}_{6} & =\mathrm{e}_{16} &  &  & \mathrm{e}_{1}\star\mathrm{e}_{56} & =y\mathrm{e}_{146}+x\mathrm{e}_{456} &  &  & \mathrm{e}_{1}\star\mathrm{e}_{346} & =x\mathrm{e}_{1346}\\
\mathrm{e}_{2}\star\mathrm{e}_{3} & =w\mathrm{e}_{23} &  &  &  &  &  &  & \mathrm{e}_{1}\star\mathrm{e}_{356} & =y\mathrm{e}_{1346}-x\mathrm{e}_{3456}\\
\mathrm{e}_{2}\star\mathrm{e}_{4} & =\mathrm{e}_{24} &  &  &  &  &  &  & \mathrm{e}_{2}\star\mathrm{e}_{456} & =z\mathrm{e}_{2345}+w\mathrm{e}_{3456}
\end{alignat*}
In Example~(\ref{example455}), we demonstrate how one can find associative
multiplications like this using a computer algebra system like Singular. 

\end{example}

\begin{example}\label{example5} Let $R=\Bbbk[x,y,z,w]$, let $\boldsymbol{m}_{\mathrm{N}}=\boldsymbol{m}=x^{2},w^{2},zw,xy,yz,y^{2},z^{2}$,
and let $F_{\mathrm{N}}=F$ be the minimal free resolution of $R\slash\boldsymbol{m}$
over $R$. One can visualize $F$ as being supported on the $\boldsymbol{m}$-labeled
cellular complex below: \begin{center}
\begin{tabular}{cccc}
\begin{tikzpicture}[scale=1]

\draw[fill=gray!20, dashed] (0,0) -- (4,-2.5) -- (5,0.2) -- (5,2.8) -- (1.5,1.5) -- (0,0); 

\node[circle, fill=black, inner sep=1pt, label=left:$\varepsilon _1 $] (a) at (0,0) {};
\node[circle, fill=black, inner sep=1pt, label=above:$\varepsilon _2 $] (b) at (1.5,1.5) {};
\node[circle, fill=black, inner sep=1pt, label=below left :$\varepsilon _3 $] (c) at (3,-0.5) {};
\node[circle, fill=black, inner sep=1pt, label=above:$\varepsilon _4 $] (d) at (3.2,1.2) {}; 
\node[circle, fill=black, inner sep=1pt, label=right:$\varepsilon _5 $] (e) at (5,0.2) {};
\node[circle, fill=black, inner sep=1pt, label=above:$\varepsilon _6 $] (f) at (5,2.8) {};
\node[circle, fill=black, inner sep=1pt, label=below:$\varepsilon _7 $] (g) at (4,-2.5) {};

\draw[] (b) -- (d);
\draw[] (a) -- (d);
\draw[] (g) -- (e); 
\draw[] (f) -- (d);
\draw[] (f) -- (b);
\draw[] (f) -- (e);
\draw[] (a) -- (b);
\draw[] (a) -- (c);
\draw[] (e) -- (d);
\draw[] (e) -- (c);
\draw[] (b) -- (c);
\draw[] (a) -- (g);
\draw[] (g) -- (c);

\end{tikzpicture} &  &  & \begin{tikzpicture}[scale=1]

\draw[fill=gray!20, dashed] (0,0) -- (4,-2.5) -- (5,0.2) -- (5,2.8) -- (1.5,1.5) -- (0,0); 

\node[circle, fill=black, inner sep=1pt, label=left:$x^2  $] (a) at (0,0) {};
\node[circle, fill=black, inner sep=1pt, label=above:$w^2 $] (b) at (1.5,1.5) {};
\node[circle, fill=black, inner sep=1pt, label=below left :$zw $] (c) at (3,-0.5) {};
\node[circle, fill=black, inner sep=1pt, label=above:$xy $] (d) at (3.2,1.2) {}; 
\node[circle, fill=black, inner sep=1pt, label=right:$yz$] (e) at (5,0.2) {};
\node[circle, fill=black, inner sep=1pt, label=above:$y^2 $] (f) at (5,2.8) {};
\node[circle, fill=black, inner sep=1pt, label=below:$z^2 $] (g) at (4,-2.5) {};

\draw[] (b) -- (d);
\draw[] (a) -- (d);
\draw[] (g) -- (e); 
\draw[] (f) -- (d);
\draw[] (f) -- (b);
\draw[] (f) -- (e);
\draw[] (a) -- (b);
\draw[] (a) -- (c);
\draw[] (e) -- (d);
\draw[] (e) -- (c);
\draw[] (b) -- (c);
\draw[] (a) -- (g);
\draw[] (g) -- (c);

\end{tikzpicture}\tabularnewline
\end{tabular}
\par\end{center}It is visibly clear that the map $R\slash\boldsymbol{m}_{\mathrm{A}}\to R\slash\boldsymbol{m}_{\mathrm{N}}$
induces a comparison map $\iota\colon F_{\mathrm{A}}\to F_{\mathrm{N}}$
defined by $\iota(\varepsilon_{\sigma})=\varepsilon_{\sigma}$ for
all homogeneous basis element $\varepsilon_{\sigma}$ of $F_{\mathrm{A}}$
(in particular, there are no monomials showing up in the coefficients
in this comparison map). Thus we run into the same problem as in Example~(\ref{example2}),
and so there is no way to choose a multigraded multiplication on $F_{\mathrm{N}}$
which is associative. \end{example}

\begin{example}\label{example6} Let $R=\Bbbk[x,y,z,w]$, let $m=xyzw,$
let $\boldsymbol{m}=mx,my,mz,mw$, and let $F$ be the minimal free
resolution of $R\slash\boldsymbol{m}$ over $R$. Then $F$ is just
the Taylor resolution with respect to $\boldsymbol{m}$ and is supported
on the $3$-simplex. Usually $F$ comes equipped with an associative
multiplication giving it the structure of a DG algebra, however we
wish to consider a different multiplication $\mu$ which gives it
the structure of a non-associative MDG algebra. In particular, this
multiplication will start out as:
\begin{align*}
e_{1}\star e_{2} & =xyzwe_{12}\\
e_{1}\star e_{3} & =xyz^{2}e_{14}-x^{2}yze_{34}\\
e_{2}\star e_{3} & =xyzwe_{23}\\
e_{3}\star e_{12} & =xyzwe_{123}-xy^{2}ze_{134}\\
e_{2}\star e_{14} & =-xyzwe_{124}\\
e_{2}\star e_{34} & =xyzwe_{234}
\end{align*}
At this point, no matter how we extend this multiplication, it will
not be associative since
\[
[e_{2},e_{1},e_{3}]=x^{2}y^{2}z^{2}w\mathrm{d}(e_{1234})\neq0.
\]
The point we wish to emphasize here is that there is a ``better''
mutliplication that we can use on $F$ anyways, namely the Taylor
multiplication. In general we would like to find the best possible
multiplication in the sense that it is as associative as possible.\end{example}

\subsubsection{Multigraded Multiplications coming from the Taylor Algebra}

In this subsubsection, we want to explain how all of the multigraded
multiplications that we have considered thus far can be viewed as
coming from a Taylor multiplication. Let $R=\Bbbk[x_{1},\dots,x_{d}]$,
let $I$ be a monomial ideal in $R$, let $F$ be the minimal free
resolution of $R\slash I$ over $R$, and let $T$ be the Taylor algebra
resolution of $R\slash I$ over $R$. We denote the Taylor multiplication
on $T$ by $\nu_{T}$. Let $\nu$ be a possibly different multiplication
on $T$. We write $T_{\nu}$ to be the MDG $R$-algebra whose underlying
$R$-complex is the same as the underlying complex of $T$ but whose
multiplication is $\nu$. Since $F$ is the minimal free resolution
of $R\slash I$ over $R$ and since $T$ is a free resolution of $R\slash I$
over $R$, there exists multigraded chain maps $\iota\colon F\to T$
and $\pi\colon T\to F$ which lift the identity map $R\slash I\to R\slash I$
such that $\iota\colon F\to T$ is injective and is split by $\pi\colon T\to F$,
meaning $\pi\iota=1$. By identifying $F$ with $\iota(F)$ if necessary,
we may assume that $\iota\colon F\subseteq T$ is inclusion and that
$\pi\colon T\to F$ is a projection, meaning $\pi\colon T\to F$ is
a surjective chain map which satisfies $\pi^{2}=\pi$, or equivalently,
$\pi\colon T\to T$ is a chain map with $\mathrm{im}\,\pi=F$. Using
the comparison maps $\iota\colon F\to T$ and $\pi\colon T\to F$,
we can transport multiplications on $F$ to multiplications on $T$
and vice versa. Namely, given a multiplication $\mu$ on $F$, we
set $\widetilde{\mu}:=\iota\mu\pi^{\otimes2}$. Similarly, given a
multiplication $\nu$ on $T$, we set $\widetilde{\nu}:=\pi\nu\iota^{\otimes2}$.
All of the multigraded multiplications that we've considered thus
far are of the form $\widetilde{\nu}_{T}$. For instance: 

\begin{example}\label{example} The multiplication $\mu$ in Example~(\ref{example1})
is given by $\mu=\pi\nu_{T}\iota^{\otimes2}$ where $T$ is the Taylor
algebra resolution of $R\slash\boldsymbol{m}_{\mathrm{K}}$ and where
$\pi\colon T\to F$ is defined by
\begin{align*}
\pi(e_{15}) & =yz^{2}e_{14}+xe_{45}\\
\pi(e_{25}) & =y^{2}ze_{23}+we_{35}\\
\pi(e_{245}) & =-yze_{234}+we_{35}\\
\pi(e_{235}) & =0\\
\pi(e_{2345}) & =0\\
 & \vdots
\end{align*}
and so on. \end{example}

\newpage{}

\subsection{MDG Modules}

We now want to define MDG $A$-modules where $A$ is an MDG $R$-algebra. 

\begin{defn}\label{defn} Let $X$ be an $R$-complex equipped with
chain maps $\mu_{A,X}\colon A\otimes_{R}X\to X$ and $\mu_{X,A}\colon X\otimes_{R}A\to X$,
denoted $a\otimes x\mapsto ax$ and $x\otimes a\mapsto xa$ respectively.
\begin{enumerate}
\item We say $X$ is \textbf{unital }if $1x=x=x1$ for all $x\in X$.
\item We say $X$ is \textbf{graded-commutative} if $ax=(-1)^{|a||x|}xa$
for all $a\in A$ homogeneous and $x\in X$ homogeneous. In this case,
$\mu_{X,A}$ is completely determined by $\mu_{A,X}$, and thus we
completely forget about it and write $\mu_{X}=\mu_{A,X}$.
\item We say $X$ is \textbf{associative }if $a_{1}(a_{2}x)=(a_{1}a_{2})x$
for all $a_{1},a_{2}\in A$ and $x\in X$.
\end{enumerate}
We say $X$ is an \textbf{MDG $A$-module }if it is unital and graded-commutative.
In this case we call $\mu_{X}$ the $A$\textbf{-scalar multiplication
}of $X$. Note that if both $A$ and $X$ are associative, then often
in the literature one calls $X$ a\textbf{ }DG\textbf{ $A$}-module.
Suppose $Y$ is another MDG $A$-module. An \textbf{MDG $A$-module
homomorphism }is a chain map $\varphi\colon X\to Y$ such that $\varphi$
is also multiplicative, meaning
\[
\varphi(ax)=a\varphi(x)
\]
for all $a\in A$ and $x\in X$. \end{defn}

\begin{rem}\label{rem} Let $A$ and $B$ be MDG $R$-algebras and
let $\varphi\colon A\to B$ be a chain map such that $\varphi(1)=1$.
Then we give $B$ the structure of an MDG $A$-module by defining
an $A$-scalar multiplication on $B$ via
\[
a\cdot b=\varphi(a)b
\]
for all $a\in A$ and $b\in B$. Note that we need $\varphi(1)=1$
in order for $B$ to be unital as an $A$-module. Also note that $\varphi$
is an MDG $A$-module homomorphism if and only if it is multiplicative.
Indeed, it is an MDG $A$-module homomorphism if and only if for all
$a_{1},a_{2}\in A$ we have
\[
\varphi(a_{1}a_{2})=a_{1}\cdot\varphi(a_{2})=\varphi(a_{1})\varphi(a_{2}),
\]
which is equivalent to saying $\varphi$ is multiplicative (since
we already have $\varphi(1)=1$). \end{rem}

\section{Associators and Multiplicators}

In order to get a better understanding as to how far away MDG objects
are from being DG objects, we need to discuss associators and multiplicators.
Associators will help us measure the failure for an MDG $A$-module
$X$ to be associative, whereas multiplicators will help up measure
the failure for a chain map $\varphi\colon X\to Y$ between MDG $A$-modules
$X$ and $Y$ to be multiplicative. In the case where $A$ and $B$
are MDG algebras and $\varphi\colon A\to B$ is a chain map such that
$\varphi(1)=1$, it will turn out that the multiplicator of $\varphi$
is just a special type of associator. Thus our main focus in this
section will be on associators.

\subsection{Associators}

We begin by defining associators. Throughout this subsection, let
$R$ be a commutative ring, let $A$ be an MDG $R$-algebra, and let
$X$ be an MDG $A$-module. 

\begin{defn}\label{defnmulthompermcomp} The \textbf{associator }of
$X$ is the chain map, denoted $[\cdot]_{X}$ (or more simply by $[\cdot]$
if $X$ is understood from context), from $A\otimes_{R}A\otimes_{R}X$
to $X$ defined by
\[
[\cdot]:=\mu(\mu\otimes1-1\otimes\mu).
\]
Note that we use $\mu$ to denote both the multiplication $\mu_{A}$
on $A$ and the $A$-scalar multiplication $\mu_{X}$ on $X$ where
context makes clear which multiplication $\mu$ refers to. We denote
by $[\cdot,\cdot,\cdot]\colon A\times A\times X\to X$ to be the unique
$R$-trilinear map which corresponds to $[\cdot]$ via the universal
mapping property of tensor products. Thus we have
\[
[a_{1}\otimes a_{2}\otimes x]=(a_{1}a_{2})x-a_{1}(a_{2}x)=[a_{1},a_{2},x]
\]
for all $a_{1},a_{2}\in A$ and $x\in X$. \end{defn}

\subsubsection{Associator Identities}

In order to familiarize ourselves with the associator we collect together
some useful identities that the associator satisfies in this subsubsection:
\begin{itemize}
\item For all $a_{1},a_{2}\in A$ homogeneous and $x\in X$ we have the
Leibniz rule
\begin{equation}
\mathrm{d}[a_{1},a_{2},x]=[\mathrm{d}a_{1},a_{2},x]+(-1)^{|a_{1}|}[a_{1},\mathrm{d}a_{2},x]+(-1)^{|a_{1}|+|a_{2}|}[a_{1},a_{2},\mathrm{d}x].\label{eq:leibnizlaw7}
\end{equation}
\item For all $a_{1},a_{2}\in A$ homogeneous and $x\in X$ homogeneous
we have
\begin{equation}
[a_{1},a_{2},x]=-(-1)^{|a_{1}||a_{2}|+|a_{1}||x|+|a_{2}||x|}[x,a_{2},a_{1}].\label{eq:identity3}
\end{equation}
\item For all $a_{1},a_{2}\in A$ homogeneous and $x\in X$ homogeneous
we have
\begin{equation}
[a_{1},a_{2},x]=-(-1)^{|a_{1}||x|+|a_{2}||x|}[x,a_{1},a_{2}]-(-1)^{|a_{1}||a_{2}|+|a_{1}||x|}[a_{2},x,a_{1}]\label{eq:identity2}
\end{equation}
\item For all $a_{1},a_{2}\in A$ homogeneous and $x\in X$ homogeneous
we have
\begin{equation}
[a_{1},a_{2},x]=(-1)^{|a_{1}||a_{2}|}[a_{2},a_{1},x]+(-1)^{|a_{2}||x|}[a_{1},x,a_{2}]\label{eq:identity4-1}
\end{equation}
\item For all $a_{1},a_{2},a_{3}\in A$ and $x\in X$ we have
\begin{equation}
a_{1}[a_{2},a_{3},x]=[a_{1}a_{2},a_{3},x]-[a_{1},a_{2}a_{3},x]+[a_{1},a_{2},a_{3}x]-[a_{1},a_{2},a_{3}]x\label{eq:identity1}
\end{equation}
\end{itemize}
~~~~~~The way the signs in (\ref{eq:identity3}) show up can
be interpreted as follows: in order to go from $[a_{1},a_{2},x]$
to $[x,a_{2},a_{1}]$, we have to first swap $a_{1}$ with $a_{2}$
(this is where the $(-1)^{|a_{1}|a_{2}|}$ comes from), then swap
$a_{1}$ with $x$ (this is where the $(-1)^{|a_{1}||x|}$ comes from),
and then finally swap $a_{2}$ with $x$ (this is where the $(-1)^{|a_{2}||x|}$
comes from). We then obtain one extra minus sign by swapping terms
in the associator at the final step:
\begin{align*}
[a_{1},a_{2},x] & =(a_{1}a_{2})x-a_{1}(a_{2}x)\\
 & =(-1)^{|a_{1}|a_{2}|}(a_{2}a_{1})x-(-1)^{|a_{2}|||x|}a_{1}(xa_{2})\\
 & =(-1)^{|a_{1}||a_{2}|+|a_{2}||x|+|a_{1}||x|}x(a_{2}a_{1})-(-1)^{|a_{2}||x|+|a_{1}||x|+|a_{1}||a_{2}|}(xa_{2})a_{1}\\
 & =(-1)^{|a_{1}||a_{2}|+|a_{1}||x|+|a_{2}||x|}(x(a_{2}a_{1})-(xa_{2})a_{1})\\
 & =-(-1)^{|a_{1}||a_{2}|+|a_{1}||x|+|a_{2}||x|}[x,a_{2},a_{1}].
\end{align*}
A similar interpretation is also given to (\ref{eq:identity2}) and
(\ref{eq:identity4-1}). For instance, in order to get from $[a_{1},a_{2},x]$
to $[x,a_{1},a_{2}]$, we have to swap $x$ with $a_{2}$ and then
swap $x$ with $a_{1}$ (this is where the $(-1)^{|a_{1}||x|+|a_{2}||x|}$
comes from). We do add an extra minus sign in (\ref{eq:identity4-1})
however since we never swap terms in the associator:
\begin{align*}
(-1)^{|a_{1}||a_{2}|}[a_{2},a_{1},x]+(-1)^{|a_{2}||x|}[a_{1},x,a_{2}] & =(a_{1}a_{2})x-(-1)^{|a_{1}||a_{2}|}a_{2}(a_{1}x)+(-1)^{|a_{2}||x|}(a_{1}x)a_{2}-a_{1}(a_{2}x)\\
 & =(a_{1}a_{2})x-(-1)^{|a_{1}||a_{2}|}a_{2}(a_{1}x)+(-1)^{|a_{1}|a_{2}|}a_{2}(a_{1}x)-a_{1}(a_{2}x)\\
 & =(a_{1}a_{2})x-a_{1}(a_{2}x)\\
 & =[a_{1},a_{2},x].
\end{align*}

\subsubsection{Alternative MDG Modules}

If $X$ is not associative, then we are often interested in knowing
whether or not $X$ satisfies the following weaker property:

\begin{defn}\label{defn} We say $X$ is \textbf{alternative }if $[a,a,x]=0$
for all $a\in A$ and $x\in X$. \end{defn}

~~~~~~In other words, $X$ is alternative if for each $a\in A$
and $x\in X$, we have $a^{2}x=a(ax).$ The reason behind the name
``alternative'' comes from the fact that in the case where $X=A$,
then $A$ is alternative if and only if the associator $[\cdot,\cdot,\cdot]$
is alternating.

\begin{prop}\label{propalternative} Let $a\in A$ and $x\in X$ be
homogeneous.
\begin{enumerate}
\item We have $[a,a,x]=0$ if and only if $[x,a,a]=0$.
\item If $[a,a,x]=0$, then $[a,x,a]=0$. The converse holds if $|a|$ is
odd and $\mathrm{char}\,R\ne2$.
\item If $|a|$ is even, we have $[a,x,a]=0$, and if $|a|$ is odd, we
have $[a,x,a]=(-1)^{|x|}2[a,a,x]$. In particular, if $\mathrm{char}\,R=2$,
we always have $[a,x,a]=0$. 
\end{enumerate}
\end{prop}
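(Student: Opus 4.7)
My plan is to prove all three parts by specializing the associator identities (\ref{eq:identity3}) and (\ref{eq:identity4-1}) to the case $a_1 = a_2 = a$. The key combinatorial fact I will repeatedly use is that $|a|^2 \equiv |a| \pmod 2$, so that $(-1)^{|a|^2} = (-1)^{|a|}$. The arguments are all short sign-tracking computations, with the only obstacle being bookkeeping.

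For part (1), I would apply identity (\ref{eq:identity3}) with $a_1 = a_2 = a$. This gives
\[
[a,a,x] \;=\; -(-1)^{|a|^2 + 2|a||x|}[x,a,a] \;=\; -(-1)^{|a|}[x,a,a],
\]
so $[a,a,x]$ and $[x,a,a]$ differ only by a sign. Hence one is zero if and only if the other is.

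For part (3), I would apply identity (\ref{eq:identity4-1}) with $a_1 = a_2 = a$, which gives
\[
[a,a,x] \;=\; (-1)^{|a|}[a,a,x] + (-1)^{|a||x|}[a,x,a].
\]
If $|a|$ is even, the first term on the right cancels $[a,a,x]$ on the left and we conclude $(-1)^{|a||x|}[a,x,a] = 0$, i.e. $[a,x,a] = 0$. If $|a|$ is odd, the right-hand side becomes $-[a,a,x] + (-1)^{|a||x|}[a,x,a]$, and solving yields
\[
[a,x,a] \;=\; (-1)^{|a||x|} \cdot 2[a,a,x] \;=\; (-1)^{|x|} \cdot 2[a,a,x],
\]
using that $|a|$ odd implies $(-1)^{|a||x|} = (-1)^{|x|}$. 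If $\mathrm{char}\,R = 2$, the factor of $2$ kills the right-hand side, giving $[a,x,a] = 0$ in every case.

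Finally, part (2) is an immediate consequence of part (3). If $|a|$ is even, then $[a,x,a] = 0$ unconditionally, so the implication $[a,a,x] = 0 \Rightarrow [a,x,a] = 0$ is automatic. If $|a|$ is odd, the formula $[a,x,a] = (-1)^{|x|} 2[a,a,x]$ shows directly that $[a,a,x] = 0$ forces $[a,x,a] = 0$, and conversely when $2$ is a unit in $R$ (i.e.\ $\mathrm{char}\,R \ne 2$), $[a,x,a] = 0$ forces $[a,a,x] = 0$. The entire proof is thus a clean application of the two identities, and the only care needed is ensuring the sign exponents $|a|^2$, $|a||x|$, etc.\ are reduced modulo $2$ correctly.
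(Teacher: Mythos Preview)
Your proof is correct and follows essentially the same approach as the paper: both specialize identities (\ref{eq:identity3}) and (\ref{eq:identity4-1}) to $a_1=a_2=a$ to obtain $[a,a,x]=-(-1)^{|a|}[x,a,a]$ and $[a,x,a]=(-1)^{|a||x|}(1-(-1)^{|a|})[a,a,x]$, from which all three parts follow by the same case analysis. One small remark: your parenthetical ``when $2$ is a unit in $R$ (i.e.\ $\mathrm{char}\,R\ne2$)'' conflates two distinct conditions, but the paper's own proof makes the same implicit leap when it writes $\tfrac{1}{2}[a,x,a]$, so you are in good company.
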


\begin{proof} From identities (\ref{eq:identity3}) and (\ref{eq:identity4-1})
we obtain
\begin{align*}
[a,a,x] & =-(-1)^{|a|}[x,a,a]\\{}
[a,x,a] & =(-1)^{|x||a|}(1-(-1)^{|a|})[a,a,x].
\end{align*}
In particular, we see that
\begin{equation}
[a,x,a]=\begin{cases}
=(-1)^{|x|}2[a,a,x]=-(-1)^{|x|}2a(ax) & \text{if }|a|\text{ is odd}\\
0 & \text{if }|a|\text{ is even}
\end{cases}\label{eq:relationshipcommut-1}
\end{equation}
Similarly we have
\begin{equation}
[a,a,x]=\begin{cases}
(-1)^{|x|}\frac{1}{2}[a,x,a] & \text{if }a\text{ is odd}\text{ and }\mathrm{char}\,R\neq2\\
(-1)^{|a|}[x,a,a] & \text{if }a\text{ is even}
\end{cases}\label{eq:relationshipcommut-1-1}
\end{equation}
\end{proof}

\begin{prop}\label{prop} Suppose $A$ is an alternative MDG $R$-algebra.
Then $[a_{1},a_{2},a_{3}]=0$ whenever $|a_{1}|$ and $|a_{3}|$ are
odd. \end{prop}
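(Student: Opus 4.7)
The plan is to extract structural consequences from alternativity via polarization, then combine with the associator identity~(\ref{eq:identity4-1}) to isolate the associator $[a_1,a_2,a_3]$.

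First, I would upgrade alternativity to antisymmetry in the first two slots. Because $[a,a,x]=0$ holds for \emph{every} $a\in A$ (not merely homogeneous), applying this to $a=b+c$ and using $R$-trilinearity of the associator yields
\[
0=[b+c,b+c,x]=[b,b,x]+[b,c,x]+[c,b,x]+[c,c,x]=[b,c,x]+[c,b,x],
\]
so $[c,b,x]=-[b,c,x]$ for all $b,c\in A$ and $x\in X$. Crucially, no graded sign intervenes because the defining identity $[a,a,x]=0$ carries no prefactor.

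Next, I would feed this sign-free antisymmetry into identity~(\ref{eq:identity4-1}): for homogeneous $a_1,a_2\in A$ and $x\in X$,
\[
[a_1,a_2,x]=(-1)^{|a_1||a_2|}[a_2,a_1,x]+(-1)^{|a_2||x|}[a_1,x,a_2]=-(-1)^{|a_1||a_2|}[a_1,a_2,x]+(-1)^{|a_2||x|}[a_1,x,a_2],
\]
which rearranges to the key identity
\[
\bigl(1+(-1)^{|a_1||a_2|}\bigr)[a_1,a_2,x]=(-1)^{|a_2||x|}[a_1,x,a_2].
\]

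Finally, I would apply this key identity with $(a_1,a_2,x)$ replaced by $(a_1,a_3,a_2)$, giving
\[
\bigl(1+(-1)^{|a_1||a_3|}\bigr)[a_1,a_3,a_2]=(-1)^{|a_3||a_2|}[a_1,a_2,a_3].
\]
Since $|a_1|$ and $|a_3|$ are both odd, $|a_1||a_3|$ is odd and the coefficient on the left-hand side vanishes, forcing $(-1)^{|a_3||a_2|}[a_1,a_2,a_3]=0$, hence $[a_1,a_2,a_3]=0$. I do not anticipate a serious obstacle; the only delicate point is ensuring the polarization produces \emph{sign-free} antisymmetry in the first two slots (not a graded version), since any stray $(-1)^{|b||c|}$ would spoil the cancellation with the $(-1)^{|a_1||a_2|}$ coming from~(\ref{eq:identity4-1}).
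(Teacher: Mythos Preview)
Your proof is correct and follows essentially the same approach as the paper: both polarize the alternativity condition $[a,a,x]=0$ to obtain sign-free antisymmetry $[b,c,x]=-[c,b,x]$ in the first two slots, then combine this with identity~(\ref{eq:identity4-1}) to conclude. Your organization is mildly slicker---by packaging the combination of antisymmetry and~(\ref{eq:identity4-1}) into the single identity $(1+(-1)^{|a_1||a_2|})[a_1,a_2,x]=(-1)^{|a_2||x|}[a_1,x,a_2]$ and then specializing, you reach $[a_1,a_2,a_3]=0$ directly, whereas the paper first lands on $[a_3,a_2,a_1]=0$ and then invokes identity~(\ref{eq:identity3}) to swap the outer slots---but the underlying mechanism is identical.
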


\begin{proof}\label{proof} Observe that
\begin{align*}
0 & =[a_{1}+a_{3},a_{1}+a_{3},a_{2}]\\
 & =[a_{1},a_{1},a_{2}]+[a_{1},a_{3},a_{2}]+[a_{3},a_{1},a_{2}]+[a_{3},a_{3},a_{2}]\\
 & =[a_{1},a_{3},a_{2}]+[a_{3},a_{1},a_{2}]\\
 & =[a_{1},a_{3},a_{2}]-[a_{1},a_{3},a_{2}]+(-1)^{|a_{2}|}[a_{3},a_{2},a_{1}]\\
 & =(-1)^{|a_{2}|}[a_{3},a_{2},a_{1}]\\
 & =(-1)^{|a_{2}|}[a_{1},a_{2},a_{3}].
\end{align*}

\end{proof}

\begin{example}\label{example} Consider the MDG $R$-algebra $F_{\mathrm{K}}$
given in Example~(\ref{example1}). Then we have $[e_{\sigma},e_{\sigma},e_{\tau}]=0$
for all $\sigma,\tau\in\Delta$, however $F$ is not alternative since
$[e_{1},e_{5},e_{2}]\neq0$. \end{example}

\subsubsection{The Maximal Associative Quotient}

\begin{defn}\label{defn} The \textbf{associator $R$-subcomplex }of
$X$, denoted $[X]$, is the $R$-subcomplex of $X$ given by the
image of the associator of $X$. Thus the underlying graded $R$-module
of $[X]$ is
\[
[X]=\mathrm{span}_{R}\{[a_{1},a_{2},x]\mid a_{1},a_{2}\in A\text{ and }x\in X\},
\]
and the differential of $[X]$ is simply the restriction of the differential
of $X$ to $[X]$. The \textbf{associator $A$-submodule }of $X$,
denoted $\langle X\rangle$, is defined to be the smallest $A$-submodule
of $X$ which contains $[X]$. Observe that
\begin{equation}
a_{1}(a_{2}[a_{3},a_{4},x])=(a_{1}a_{2})[a_{3},a_{4},x]-[a_{1},a_{2},[a_{3},a_{4},x]]\label{eq:jklfdsm-1}
\end{equation}
for all $a_{1},a_{2},a_{3},a_{4}\in A$ and $x\in X$. Using identities
like (\ref{eq:jklfdsm-1}) together with graded-commutativity, one
can show that the underlying graded $R$-module of $\langle X\rangle$
is given by 
\[
\langle X\rangle=\mathrm{span}_{R}\{a_{1}[a_{2},a_{3},x]\mid a_{1},a_{2},a_{3}\in A\text{ and }x\in X\}
\]

The quotient $X^{\mathrm{as}}:=X\slash\langle X\rangle$ is a DG $A$-module
(i.e. an associative MDG $A$-module). We call $X^{\mathrm{as}}$
(together with its canonical quotient map $X\twoheadrightarrow X^{\mathrm{as}}$)
the \textbf{maximal associative quotient }of\textbf{ }$X$. \end{defn}

~~~~~~The maximal associative quotient of $X$ satisfies the
following universal mapping property: 

\begin{prop}\label{prop} Every $\mathrm{MDG}$ $A$-module homomorphism
$\varphi\colon X\to Y$ in which $Y$ is associative factors through
a unique $\mathrm{MDG}$ $A$-module homomorphism $\overline{\varphi}\colon X^{\mathrm{as}}\to Y$,
meaning $\overline{\varphi}\rho=\varphi$ where $\rho\colon X\twoheadrightarrow X^{\mathrm{as}}$
is the canonical quotient map. We express this in terms of a commutative
diagram as below: \begin{equation}\label{equationump}\begin{tikzcd}[row sep =40, column sep = 40]  X \arrow[r, "\rho  "] \arrow[dr, "\varphi ", swap] & X^{\mathrm{as} } \arrow[d, dashed," \overline{\varphi}"] \\ & Y  \end{tikzcd}\end{equation}
\end{prop}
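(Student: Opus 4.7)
The plan is to follow the standard quotient-by-kernel argument: show that $\varphi$ vanishes on the associator submodule $\langle X\rangle$, apply the universal property of quotients in the category of MDG $A$-modules to obtain $\overline{\varphi}$, and note that uniqueness is automatic from the surjectivity of $\rho$.

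First I would verify that $\varphi([X])=0$. For any $a_1,a_2\in A$ and $x\in X$, using that $\varphi$ is an MDG $A$-module homomorphism and that $Y$ is associative, one computes
\[
\varphi([a_1,a_2,x]) = \varphi((a_1a_2)x - a_1(a_2x)) = (a_1a_2)\varphi(x) - a_1(a_2\varphi(x)) = [a_1,a_2,\varphi(x)] = 0.
\]
Next I would extend this to all of $\langle X\rangle$. By the explicit description given in the definition, $\langle X\rangle$ is the $R$-span of elements of the form $a_1[a_2,a_3,x]$, and $A$-linearity of $\varphi$ gives
\[
\varphi(a_1[a_2,a_3,x]) = a_1\,\varphi([a_2,a_3,x]) = 0.
\]
Therefore $\langle X\rangle \subseteq \ker\varphi$.

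With this inclusion in hand, the universal property of the quotient of $R$-complexes produces a unique $R$-complex morphism $\overline{\varphi}\colon X^{\mathrm{as}}\to Y$ with $\overline{\varphi}\rho=\varphi$, defined by $\overline{\varphi}(x+\langle X\rangle)=\varphi(x)$. I would then check that $\overline{\varphi}$ is in fact an MDG $A$-module homomorphism: unitality $\overline{\varphi}(1+\langle X\rangle)=\varphi(1)=1$ and $A$-linearity $\overline{\varphi}(a(x+\langle X\rangle))=\varphi(ax)=a\varphi(x)=a\overline{\varphi}(x+\langle X\rangle)$ both follow immediately from the corresponding properties of $\varphi$ and from the fact that the $A$-action on $X^{\mathrm{as}}$ is the one induced from $X$.

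Uniqueness is automatic: if $\overline{\psi}\colon X^{\mathrm{as}}\to Y$ also satisfies $\overline{\psi}\rho=\varphi$, then $\overline{\psi}$ and $\overline{\varphi}$ agree on the image of $\rho$, which is all of $X^{\mathrm{as}}$ since $\rho$ is surjective. The only genuine content here is the verification $\varphi(\langle X\rangle)=0$, and that is entirely a one-line consequence of the associativity of $Y$; there is no real obstacle beyond this.
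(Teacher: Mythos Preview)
Your proof is correct and follows essentially the same route as the paper's: show $\varphi$ kills $[X]$, deduce it kills $\langle X\rangle$ (you use the explicit spanning description, the paper instead observes that $\ker\varphi$ is an MDG $A$-submodule containing $[X]$ and invokes minimality of $\langle X\rangle$), then pass to the quotient. One small slip: an MDG $A$-module homomorphism carries no unitality condition and there is no distinguished element $1\in X$, so your check ``$\overline{\varphi}(1+\langle X\rangle)=\varphi(1)=1$'' is not well-posed and should simply be dropped.
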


\begin{proof}\label{proof} Indeed, suppose $\varphi\colon X\to Y$
is any MDG $A$-module homomorphism where $Y$ is associative. In
particular, we must have $[X]\subseteq\ker\varphi$, and since $\langle X\rangle$
is the smallest MDG $A$-submodule of $X$ which contains $[X]$,
it follows that $\langle X\rangle\subseteq\ker\varphi$. Thus the
map $\overline{\varphi}\colon X^{\mathrm{as}}\to Y$ given by $\overline{\varphi}(\overline{x}):=\varphi(x)$
where $\overline{x}\in X^{\mathrm{as}}$ is well-defined. Furthermore,
it is easy to see that $\overline{\varphi}$ is an MDG $A$-module
homomorphism and the unique such one which makes the diagram (\ref{equationump})
commute. \end{proof}

\begin{defn}\label{defn} The \textbf{associator homology }of $X$
is the homology of the associator $A$-submodule of $X$. We often
simplify notation and denote the associator homology of $X$ by $\mathrm{H}\langle X\rangle$
instead of $\mathrm{H}(\langle X\rangle)$. We say $X$ is \textbf{homologically
associative} if $\mathrm{H}\langle X\rangle=0$ and we say $X$ is
\textbf{homologically associative in degree} $i$ if $\mathrm{H}_{i}\langle X\rangle=0$.
Similarly we say $X$ is associative in degree if $\langle X\rangle_{i}=0$.
\end{defn}

~~~~~~Clearly, if $X$ is associative, then $X$ is homologically
associative. The converse holds under certain conditions. This is
the first main theorem given in the introduction.

\begin{theorem}\label{theoremhomologyassociator} Assume $R$ is a
local ring with maximal ideal $\mathfrak{m}$ and assume that $\langle X\rangle$
is minimal (meaning $\mathrm{d}\langle X\rangle\subseteq\mathfrak{m}\langle X\rangle$)
and such that each $\langle X\rangle_{i}$ is a finitely generated
$R$-module. If $X$ is associative in degree $i$, then $X$ is associative
in degree $i+1$ if and only if $X$ is homologically associative
in degree $i+1$. In particular, if $\langle X\rangle$ is also bounded
below (meaning $\langle X\rangle_{i}=0$ for $i\ll0$), then $X$
is associative if and only if $X$ is homologically associative. \end{theorem}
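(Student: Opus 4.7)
The plan is to reduce the statement to a direct application of Nakayama's lemma using the minimality hypothesis. The forward implication is essentially trivial: if $\langle X\rangle_{i+1}=0$, then $\mathrm{Z}_{i+1}\langle X\rangle$ and $\mathrm{B}_{i+1}\langle X\rangle$ are both zero and so $\mathrm{H}_{i+1}\langle X\rangle=0$. Thus the substance lies in the reverse implication.

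For the reverse direction, suppose $\langle X\rangle_i=0$ (associativity in degree $i$) and $\mathrm{H}_{i+1}\langle X\rangle=0$. The first step is to observe that since $\langle X\rangle_i=0$, the restriction of the differential $\mathrm{d}\colon\langle X\rangle_{i+1}\to\langle X\rangle_i$ is the zero map, so every element of $\langle X\rangle_{i+1}$ is a cycle, i.e.\ $\mathrm{Z}_{i+1}\langle X\rangle=\langle X\rangle_{i+1}$. The vanishing of homology then forces $\langle X\rangle_{i+1}=\mathrm{B}_{i+1}\langle X\rangle=\mathrm{d}(\langle X\rangle_{i+2})$. Now I would invoke minimality: $\mathrm{d}\langle X\rangle\subseteq\mathfrak{m}\langle X\rangle$ gives the inclusion $\langle X\rangle_{i+1}\subseteq\mathfrak{m}\,\langle X\rangle_{i+1}$. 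Since $\langle X\rangle_{i+1}$ is finitely generated over the local ring $R$, Nakayama's lemma yields $\langle X\rangle_{i+1}=0$, which is exactly associativity in degree $i+1$.

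For the ``in particular'' statement, I would argue by induction, bootstrapping from below. The bounded-below hypothesis provides an integer $i_0$ with $\langle X\rangle_i=0$ for all $i<i_0$, so $X$ is trivially associative in every degree less than $i_0$. Assuming $X$ is homologically associative and that $X$ is associative in degree $i$ for some $i\geq i_0-1$, the previous paragraph promotes associativity to degree $i+1$. Induction then shows $\langle X\rangle_i=0$ for all $i$, giving $\langle X\rangle=0$ and hence associativity of $X$.

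The only place where the argument requires care is verifying that the bookkeeping lines up correctly with the definitions, in particular that $\langle X\rangle_i=0$ truly reduces the cycle computation to ``everything is a cycle'' (so that the homology assumption can be pushed back onto $\langle X\rangle_{i+1}$ itself), and that each $\langle X\rangle_{i+1}$ inherits finite generation from the hypothesis so that Nakayama applies. Once those hypotheses are confirmed to fit, the proof is a short three-line chain: kill the outgoing differential using associativity in degree $i$, kill cycles using homological associativity in degree $i+1$, and kill the module itself using minimality plus Nakayama. I expect no serious obstacle beyond carefully stating these three reductions.
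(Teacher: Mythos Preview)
Your proposal is correct and follows essentially the same approach as the paper: both arguments use $\langle X\rangle_i=0$ to force every element of $\langle X\rangle_{i+1}$ to be a cycle, then combine homological associativity with minimality and Nakayama's lemma to conclude $\langle X\rangle_{i+1}=0$. The only cosmetic difference is that the paper frames the reverse implication as a proof by contradiction (picking an element outside $\mathfrak{m}\langle X\rangle_{i+1}$ and showing it survives in homology), whereas you argue directly that $\langle X\rangle_{i+1}=\mathrm{d}(\langle X\rangle_{i+2})\subseteq\mathfrak{m}\langle X\rangle_{i+1}$.
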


\begin{proof} Assume that $X$ is associative in degree $i$. Clearly
if $X$ is associative in degree $i+1$, then it is homologically
associative in degree $i+1$. To show the converse, assume for a contradiction
that $X$ is homologically associative in degree $i+1$ but that it
is not associative in degree $i+1$. In other words, assume
\[
\mathrm{H}_{i+1}\langle X\rangle=0\quad\text{and}\quad\langle X\rangle_{i+1}\neq0.
\]
Then by Nakayama's lemma, we can find homogeneous $a_{1},a_{2},a_{3}\in A$
and homogeneous $x\in X$ such that such that $a_{1}[a_{2},a_{3},x]\notin\mathfrak{m}\langle X\rangle_{i+1}$.
Since $\langle X\rangle_{i}=0$ by assumption, we have $\mathrm{d}(a_{1}[a_{1},a_{2},x])=0$.
Also, since $\langle X\rangle$ is minimal, we have $\mathrm{d}\langle X\rangle\subseteq\mathfrak{m}\langle X\rangle$.
Thus $a_{1}[a_{2},a_{3},x]$ represents a nontrivial element in homology
in degree $i+1$. This is a contradiction. \end{proof}

~~~~~~We are often also interested in the homology of the maximal
associative quotient of $X$ as well. To this end, observe that the
short exact sequence of MDG $A$-modules \begin{center}\begin{tikzcd} 0 \arrow[r] & \langle X \rangle \arrow[r,  ]  & X \arrow[r] & X^{\mathrm{as}} \arrow[r] & 0 \end{tikzcd}\end{center}induces
a sequence of graded $\mathrm{H}(A)$-modules\begin{center}\begin{tikzcd} \mathrm{H}\langle X \rangle \arrow[r] & \mathrm{H}(X) \arrow[r] & \mathrm{H}(X^{\mathrm{as}} ) \arrow[r, " \overline{\mathrm{d}} " ]  & \Sigma \mathrm{H}\langle X \rangle   \arrow[r] & \Sigma \mathrm{H}(X) \end{tikzcd}\end{center}which
is exact at $\mathrm{H}\langle X\rangle$, $\mathrm{H}(X)$, and $\mathrm{H}(X^{\mathrm{as}})$
and where the connecting map $\overline{\mathrm{d}}\colon\mathrm{H}(X^{\mathrm{as}})\to\Sigma\mathrm{H}\langle X\rangle$
is essentially defined in terms of the differential $\mathrm{d}$
of $X$, namely given $\overline{x}\in\mathrm{H}(X^{\mathrm{as}})$,
we set $\overline{\mathrm{d}}\overline{x}=\overline{\mathrm{d}x}$.
In particular, if $\mathrm{H}_{i}(X)=0=\mathrm{H}_{i-1}(X)$, then
$\mathrm{H}_{i}(X^{\mathrm{as}})\cong\mathrm{H}_{i-1}\langle X\rangle$.

\begin{example}\label{example} Assume that $R$ is a local noetherian
ring with maximal ideal $\mathfrak{m}$. Let $I\subseteq\mathfrak{m}$
be an ideal of $R$ and let $F$ be the minimal free resolution of
$R\slash I$ over $R$ and equip $F$ with a multiplication giving
it the structure of an MDG $R$-algebra. Then
\[
\mathrm{H}_{i}(F^{\mathrm{as}})\cong\begin{cases}
R\slash I & \text{if }i=0\\
\mathrm{H}_{i-1}\langle F\rangle & \text{else}
\end{cases}
\]
\end{example}

\subsubsection{Computing Annihilators of the Associator Homology}

In this subsubsection, we assume that $R$ is an integral domain with
quotient field $K$. We further assume that the underlying graded
$R$-module of $A$ is free. Recall that the $A$-scalar multiplication
map $\mu_{\langle X\rangle}\colon A\otimes_{R}\langle X\rangle\to\langle X\rangle$
induces an $\mathrm{H}(A)$-scalar multiplication map $\overline{\mu}_{\langle X\rangle}\colon\mathrm{H}(A)\otimes_{R}\mathrm{H}\langle X\rangle\to\mathrm{H}\langle X\rangle$
which gives $\mathrm{H}\langle X\rangle$ an $\mathrm{H}(A)$-module
structure. In particular, $\mathrm{d}A$ annihilates $\mathrm{H}\langle X\rangle$.
However we can often find more annihilators of $\mathrm{H}\langle X\rangle$
than just the ones contained in $\mathrm{d}A$. Indeed, set
\[
A_{K}=\{a/r\mid a\in A\text{ and }r\in R\backslash\{0\}\}\quad\text{and}\quad B=\{b\in A_{K}\mid b\langle X\rangle\subseteq\langle X\rangle\}.
\]
Then $A_{K}$ is an MDG $K$-algebra and $B$ is an MDG subalgebra
of $A_{K}$ which contains $A$. Furthermore $\langle X\rangle$ is
an MDG $B$-module (in fact $B$ is the largest MDG subalgebra of
$A_{K}$ for which $\langle X\rangle$ is an MDG module over). In
particular, $A\cap\mathrm{d}B$ annihilates $\mathrm{H}\langle X\rangle$.
In general we have
\[
\mathrm{d}A\subseteq A\cap\mathrm{d}B\subseteq A,
\]
where the inclusions may be strict. 

\begin{example}\label{examplefdklds} Consider Example~(\ref{example1})
where $R=\Bbbk[x,y,z,w]$, $\boldsymbol{m}=x^{2},w^{2},zw,xy,y^{2}z^{2}$,
and $F$ is the minimal free resolution of $R\slash\boldsymbol{m}$
over\- $R$. Observe that
\begin{align*}
\frac{e_{1}}{x}[e_{1},e_{5},e_{2}] & =\frac{1}{x}\left([e_{1}^{2},e_{5},e_{2}]-[e_{1},e_{1}e_{5},e_{2}]+[e_{1},e_{1},e_{5}e_{2}]-[e_{1},e_{1},e_{5}]e_{2}\right)\\
 & =-\frac{1}{x}[e_{1},e_{1}e_{5},e_{2}]\\
 & =-\frac{1}{x}[e_{1},yz^{2}e_{14}+xe_{45},e_{2}]\\
 & =-\frac{yz^{2}}{x}[e_{1},e_{14},e_{2}]-[e_{1},e_{45},e_{2}]\\
 & =-[e_{1},e_{45},e_{2}].
\end{align*}
It follows that $\mathrm{d}(e_{1}/x)=x$ annihilates $\mathrm{H}\langle F\rangle$.
Similar calculations likes this shows that $\langle x,y,z,w\rangle$
annihilates $\mathrm{H}\langle F\rangle$. It follows that
\[
\mathrm{H}_{i}\langle F\rangle\cong\begin{cases}
\Bbbk & \text{if }i=3\\
0 & \text{else}
\end{cases}
\]
One can interpret this as saying that the multiplication $\mu$ is
very close to being associative (the failure for $\mu$ to being associative
is reflected in the fact that $\dim_{\Bbbk}(\mathrm{H}\langle F\rangle)=1$).
Note that $\mu$ is not associative in homological degree $4$ since
\[
[e_{1},e_{45},e_{2}]=xyze_{1234}\neq0.
\]
In some sense however, the fact that the associator $[e_{1},e_{45},e_{2}]$
is nonzero isn't really a \emph{new }obstruction to $\mu$ being associative.
Indeed, one could argue that $[e_{1},e_{45},e_{2}]$ being nonzero
is simply a consequence of $[e_{1},e_{5},e_{2}]$ being nonzero. More
generally, in order for a nonzero element $\gamma\in\langle F\rangle$
to be considered an obstruction for $\mu$ to be associative, we should
have $\mathrm{d}\gamma=0$ (otherwise one could argue that $\gamma$
being nonzero is simply a consequence of the associators in $\mathrm{d}\gamma$
being nonzero). Similarly, we shouldn't have $\gamma=\mathrm{d}\gamma'$
(otherwise one could argue that $\gamma$ being nonzero is simply
a consequence of $\gamma'$ being nonzero). Thus the associators which
really do contribute new obstructions for $\mu$ to be associative
should be the ones which represent nonzero elements in homology. This
is how we interpret the associator homology of $F$. In this case,
we have precisely one nontrivial associator $[e_{1},e_{5},e_{2}]$
which represents a nonzero element in homology (all of the other nonzero
associators are derived from the fact that $[e_{1},e_{5},e_{2}]\neq0$).\end{example}

\begin{example}\label{examplefdklds} Consider Example~(\ref{example3})
where $R=\Bbbk[x,y,z,w]$, $\boldsymbol{m}=x^{2},w^{2},zw,xy,y^{2}z,yz^{2}$,
and $F$ is the minimal free resolution of $R\slash\boldsymbol{m}$
of $R$. By performing similar calculations as in Example~(\ref{examplefdklds}),
one can show that
\[
\mathrm{H}_{i}\langle F\rangle\cong\begin{cases}
\Bbbk\oplus\Bbbk & \text{if }i=3\\
0 & \text{else}
\end{cases}
\]
 \end{example}

\subsubsection{Associators up to Homotopy}

Let $I$ be an ideal of $R$ and let $F$ be a free resolution of
$R\slash I$ over $R$. We write $F^{\otimes2}=F\otimes_{R}F$ in
what follows. A chain map $\mu\in F^{\otimes2}\to F$ which lifts
the multiplication map on $R\slash I$ is unique up to homotopy. What
this means is that if $\mu'\in F^{\otimes2}\to F$ is another chain
map which lifts the multiplication map on $R\slash I$, then there
exists a graded $R$-linear map $h\colon F^{\otimes2}\to F$ of degree
one such that $\mu'=\mu_{h}$ where
\begin{equation}
\mu_{h}:=\mu+\mathrm{d}h+h\mathrm{d}.\label{eq:contextclear}
\end{equation}
Notice how we are simplifying notation in (\ref{eq:contextclear})
by lettting $\mathrm{d}$ denote the differentials for both $F^{\otimes2}$
and $F$ where context makes clear which differential $\mathrm{d}$
stands for (for instance, the $\mathrm{d}$ in $\mathrm{d}h$ is the
differential of $F$ and the $\mathrm{d}$ in $h\mathrm{d}$ is the
differential of $F^{\otimes2}$). This notational simplification will
be beneficial when we perform calculations in what follows. 

~~~~~~If both $\mu$ and $\mu_{h}$ are graded-commutative,
then $h\sigma\colon F^{\otimes2}\to F$ must be a chain map of degree
$1$, where $\sigma\colon F^{\otimes2}\to F^{\otimes2}$ is defined
by
\[
\sigma(a_{1}\otimes a_{2})=a_{1}\otimes a_{2}-(-1)^{|a_{1}||a_{2}|}a_{2}\otimes a_{1}
\]
for all homogeneous $a_{1},a_{2}\in F$. Indeed, if both $\mu$ and
$\mu_{h}$ are graded-commutative, then we have
\begin{align*}
\mathrm{d}h\sigma+h\sigma\mathrm{d} & =\mathrm{d}h\sigma+h\mathrm{d}\sigma\\
 & =(\mathrm{d}h+h\mathrm{d})\sigma\\
 & =(\mu_{h}-\mu)\sigma\\
 & =\mu_{h}\sigma-\mu\sigma\\
 & =0-0\\
 & =0.
\end{align*}
Similarly, if both $\mu$ and $\mu_{h}$ are unital, then $h|_{F\otimes1}$
and $h|_{1\otimes F}$ must be chain maps of degree $1$. Next observe
that the associator for $\mu_{h}$ is given by
\begin{equation}
[\cdot]_{\mu_{h}}=[\cdot]_{\mu}+\mathrm{d}H+H\mathrm{d}\label{eq:associatorhomotopy}
\end{equation}
where $H=[\cdot]_{\mu,h}+[\cdot]_{h,\mu_{h}}$. Here, we set
\[
[\cdot]_{\mu,h}=\mu(h\otimes1-1\otimes h)\quad\text{and}\quad[\cdot]_{h,\mu_{h}}=h(\mu_{h}\otimes1-1\otimes\mu_{h}).
\]
Note that additional signs will appear in $[\cdot]_{\mu,h}$ when
applied to elements due to the Koszul sign rule. In particular, if
$a_{1},a_{2}\in F$ are homogeneous, then
\[
(1\otimes h)(a_{1}\otimes a_{2})=(-1)^{|a_{1}|}a_{1}\otimes ha_{2}
\]
since $h$ is graded of degree $1$. We can decompose $[\cdot]_{h,\mu_{h}}$
further as
\[
[\cdot]_{h,\mu_{h}}=[\cdot]_{h,\mu}+[\cdot]_{h,\mathrm{d}h}+[\cdot]_{h,h\mathrm{d}}
\]
where
\[
[\cdot]_{h,\mu}=h(\mu\otimes1-1\otimes\mu),\quad[\cdot]_{h,\mathrm{d}h}=h(\mathrm{d}h\otimes1-1\otimes\mathrm{d}h),\quad\text{and}\quad[\cdot]_{h,h\mathrm{d}}=h(h\mathrm{d}\otimes1-1\otimes h\mathrm{d}).
\]
~~~~~~We now want to use the multiplication constructed in Example~(\ref{example2})
to show that there does not exist any DG algebra structure on that
resolution. In fact, it was already shown that this resolution has
no DG algebra structure on it in \cite{Avr81}, however we prove something
slightly stronger: every MDG algebra structure on that resolution
will be non-associative at a particular triple. This is our second
main theorem from the introduction:

\begin{theorem}\label{theoremsecond} Let $R=\Bbbk[x,y,z,w]$, let
$\boldsymbol{m}=x^{2},w^{2},zw,xy,yz$, and let $F$ be the minimal
free resolution of $R\slash\boldsymbol{m}$ over $R$. Every multiplication
on $F$ is non-associative at the triple $(\varepsilon_{1},\varepsilon_{45},\varepsilon_{2})$.
\end{theorem}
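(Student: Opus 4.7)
My plan is to use the specific multiplication $\mu=\mu_{\mathrm{A}}$ from Example~\ref{example2} as a base point and argue, via the associator-change identity~(\ref{eq:associatorhomotopy}), that no homotopy modification of $\mu$ can cancel the associator at $(\varepsilon_{1},\varepsilon_{45},\varepsilon_{2})$. The main tools are the Leibniz rule~(\ref{eq:leibnizlaw7}), the multigrading of $F$, and the fact that $F_{5}=0$ so that $\mathrm{d}\colon F_{4}\to F_{3}$ is injective.

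I would first pin down $[\varepsilon_{1},\varepsilon_{45},\varepsilon_{2}]_{\mu}$ in closed form. Since $\mathrm{d}\varepsilon_{1}=x^{2}$ and $\mathrm{d}\varepsilon_{2}=w^{2}$ are scalars, (\ref{eq:leibnizlaw7}) collapses to
\[
\mathrm{d}[\varepsilon_{1},\varepsilon_{45},\varepsilon_{2}]_{\mu}=z[\varepsilon_{1},\varepsilon_{4},\varepsilon_{2}]_{\mu}-x[\varepsilon_{1},\varepsilon_{5},\varepsilon_{2}]_{\mu}
\]
after expanding $\mathrm{d}\varepsilon_{45}=z\varepsilon_{4}-x\varepsilon_{5}$. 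A multigrading count places $[\varepsilon_{1},\varepsilon_{4},\varepsilon_{2}]_{\mu}$ in $F_{3}$ at multidegree $x^{3}yw^{2}$, which no monomial rescaling of a generator of $F_{3}$ can realize, forcing this term to vanish. Combining with $[\varepsilon_{1},\varepsilon_{5},\varepsilon_{2}]_{\mu}=-\mathrm{d}\varepsilon_{12345}$ from Example~\ref{example2} and the injectivity of $\mathrm{d}\colon F_{4}\to F_{3}$ gives $[\varepsilon_{1},\varepsilon_{45},\varepsilon_{2}]_{\mu}=x\,\varepsilon_{12345}$ up to a sign convention.

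Next, for any other multiplication $\mu'$, write $\mu'=\mu+\mathrm{d}h+h\mathrm{d}$ and apply~(\ref{eq:associatorhomotopy}). Because $\varepsilon_{1}\otimes\varepsilon_{45}\otimes\varepsilon_{2}$ has total degree $4$, the value $H(\varepsilon_{1}\otimes\varepsilon_{45}\otimes\varepsilon_{2})\in F_{5}$ vanishes, so the $\mathrm{d}H$ contribution drops out. Expanding $H\mathrm{d}(\varepsilon_{1}\otimes\varepsilon_{45}\otimes\varepsilon_{2})$ using $R$-linearity of $H$ and the formulas for $\mathrm{d}\varepsilon_{1},\mathrm{d}\varepsilon_{45},\mathrm{d}\varepsilon_{2}$, the correction lands in $F_{4}=R\,\varepsilon_{12345}$ as $P\,\varepsilon_{12345}$ with
\[
P=x^{2}g_{1}-zp+xq-w^{2}g_{3}\in R,
\]
where $g_{1},p,q,g_{3}$ are the $\varepsilon_{12345}$-coefficients of $H$ applied to $1\otimes\varepsilon_{45}\otimes\varepsilon_{2}$, $\varepsilon_{1}\otimes\varepsilon_{4}\otimes\varepsilon_{2}$, $\varepsilon_{1}\otimes\varepsilon_{5}\otimes\varepsilon_{2}$, $\varepsilon_{1}\otimes\varepsilon_{45}\otimes 1$ respectively. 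Hence $[\varepsilon_{1},\varepsilon_{45},\varepsilon_{2}]_{\mu'}=(x+x^{2}g_{1}-zp+xq-w^{2}g_{3})\varepsilon_{12345}$.

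To finish, I would show the polynomial above is nonzero. The monomial $x$ cannot appear in the expansions of $x^{2}g_{1}$, $zp$, or $w^{2}g_{3}$ (these all lie in the ideal $(x^{2},z,w^{2})\subset R$, which does not contain $x$), while in $xq$ it arises only from the constant term $q_{0}$ of $q$; so cancellation would demand $q_{0}=-1$. I would rule this out by a multigraded analysis of $H(\varepsilon_{1}\otimes\varepsilon_{5}\otimes\varepsilon_{2})$ decomposed according to $H=[\cdot]_{\mu,h}+[\cdot]_{h,\mu'}$, noting that the degree-$(2,1,1,2)$ component that controls $q_{0}$ forces the intermediate multigraded values $h_{0}(\varepsilon_{1},\varepsilon_{5})$, $h_{0}(\varepsilon_{5},\varepsilon_{2})$, and $h_{0}(\varepsilon_{\sigma},\varepsilon_{2})$, $h_{0}(\varepsilon_{1},\varepsilon_{\sigma})$ to vanish (the required multidegrees are simply unreachable in $F_{3}$ or $F_{4}$). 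The hard part of the plan is this final vanishing: one must also handle possible cross-contributions between non-multigraded components $h_{e}$ and $\mu'_{-e}$ (for $e\ne 0$) to the constant term of $q$, using the chain-map property of each multigraded piece together with the unitality and strict graded-commutativity of $\mu'$ to conclude they collectively cannot produce $q_{0}=-1$.
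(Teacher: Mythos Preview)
Your setup matches the paper's proof closely: same base multiplication $\mu=\mu_{\mathrm{A}}$, same use of the homotopy identity~(\ref{eq:associatorhomotopy}), same observation that $F_{5}=0$ kills the $\mathrm{d}H$ term, and the same reduction to analyzing $H\mathrm{d}(\varepsilon_{1}\otimes\varepsilon_{45}\otimes\varepsilon_{2})$. Your computation that the obstruction reduces to whether the constant term $q_{0}$ of $q$ equals $-1$ is also correct and is equivalent to what the paper proves.

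The gap is in your final paragraph. You propose to control $q_{0}$ by a multigraded decomposition of $h$, but $h$ is an \emph{arbitrary} graded $R$-linear map of degree one; nothing forces it to respect the $\mathbb{Z}^{4}$-grading, and you explicitly flag the ``cross-contributions between non-multigraded components $h_{e}$ and $\mu'_{-e}$'' as unresolved. There is no evident mechanism---chain-map conditions, unitality, graded-commutativity of $\mu'$---that pins down the constant term of $q$ once you allow $h$ to mix multidegrees, so this line of attack does not close.

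The paper avoids this entirely by shifting the burden from the unknown $h$ to the \emph{fixed} multiplication $\mu$. It works modulo the ideal $I=\langle x^{2},y,z,w\rangle$ (note $x\notin I$ but $x\mathfrak{m}\subseteq I$) and shows $(\mathrm{d}H+H\mathrm{d})(\varepsilon_{1}\otimes\varepsilon_{45}\otimes\varepsilon_{2})\in IF$. The pieces $[\cdot]_{h,\mathrm{d}h}\mathrm{d}$ and $[\cdot]_{h,h\mathrm{d}}\mathrm{d}$ land in $\mathfrak{m}^{2}F\subseteq IF$ automatically. For $[\cdot]_{h,\mu}\mathrm{d}$, only the $-x[\varepsilon_{1}\otimes\varepsilon_{5}\otimes\varepsilon_{2}]_{h,\mu}$ term survives mod $I$, and this vanishes because the \emph{specific} products $\varepsilon_{1}\star_{\mu}\varepsilon_{5}$ and $\varepsilon_{5}\star_{\mu}\varepsilon_{2}$ already lie in $\mathfrak{m}F_{2}$, so after applying $h$ and multiplying by $x$ one lands in $x\mathfrak{m}F\subseteq IF$. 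For $[\cdot]_{\mu,h}\mathrm{d}$, the surviving term is $-x[\varepsilon_{1}\otimes\varepsilon_{5}\otimes\varepsilon_{2}]_{\mu,h}$, which involves $h(\varepsilon_{1}\otimes\varepsilon_{5}),\,h(\varepsilon_{5}\otimes\varepsilon_{2})\in F_{3}$ multiplied (via $\mu$) against $\varepsilon_{2}$ or $\varepsilon_{1}$; here the paper uses the concrete facts $\varepsilon_{1}\star_{\mu}F_{3}\subseteq\mathfrak{m}F_{4}$ and $\varepsilon_{2}\star_{\mu}F_{3}\subseteq\mathfrak{m}F_{4}$, again giving $x\mathfrak{m}F\subseteq IF$. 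The point is that all constraints are on $\mu$, which you chose and can check directly, never on $h$. In your language this proves $q\in(I:x)=\mathfrak{m}$, hence $q_{0}=0$, without any multigraded bookkeeping on $h$.
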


\begin{proof}\label{proof} Let $\mu$ be the multiplication constructed
in Example~(\ref{example2}) and let $\mu_{h}=\mu+\mathrm{d}h+h\mathrm{d}$
be another multiplication on $F$. We claim that $[\varepsilon_{1},\varepsilon_{45},\varepsilon_{5}]_{\mu_{h}}\neq0$.
Indeed, the idea is that on the one hand we have $[\varepsilon_{1},\varepsilon_{45},\varepsilon_{2}]_{\mu}=-x\varepsilon_{12345}$
but on the other hand we have
\[
(\mathrm{d}H+H\mathrm{d})(\varepsilon_{1}\otimes\varepsilon_{45}\otimes\varepsilon_{2})\in IF
\]
where $H$ is the map described in (\ref{eq:associatorhomotopy})
and where $I=\langle x^{2},y,z,w\rangle$. In particular, $[\varepsilon_{1},\varepsilon_{45},\varepsilon_{2}]_{\mu_{h}}\not\equiv0$
modulo $IF$ which implies $[\varepsilon_{1},\varepsilon_{45},\varepsilon_{2}]_{\mu_{h}}\neq0$.
To see this, first note that $\mathrm{d}H(\varepsilon_{1}\otimes\varepsilon_{45}\otimes\varepsilon_{2})=0$,
so we only need to show that
\[
H\mathrm{d}(\varepsilon_{1}\otimes\varepsilon_{45}\otimes\varepsilon_{2})=([\cdot]_{\mu,h}+[\cdot]_{h,\mu}+[\cdot]_{h,\mathrm{d}h}+[\cdot]_{h,h\mathrm{d}})\mathrm{d}(\varepsilon_{1}\otimes\varepsilon_{45}\otimes\varepsilon_{2})\in IF.
\]
Now clearly both $[\cdot]_{h,\mathrm{d}h}\mathrm{d}$ and $[\cdot]_{h,h\mathrm{d}}\mathrm{d}$
land in $\mathfrak{m}^{2}F\subseteq IF$ where $\mathfrak{m}=\langle x,y,z,w\rangle$
since $F$ is minimal and the differential shows up twice in each
case. Next note in $F\slash IF$ we have
\begin{align*}
[\cdot]_{h,\mu}\mathrm{d}(\varepsilon_{1}\otimes\varepsilon_{45}\otimes\varepsilon_{2}) & \equiv x^{2}[1\otimes\varepsilon_{45}\otimes\varepsilon_{2}]_{h,\mu}-x[\varepsilon_{1}\otimes\varepsilon_{5}\otimes\varepsilon_{2}]_{h,\mu}+z[\varepsilon_{1}\otimes\varepsilon_{4}\otimes\varepsilon_{2}]_{h,\mu}+w^{2}[\varepsilon_{1}\otimes\varepsilon_{45}\otimes1]_{h,\mu}\\
 & \equiv-x[\varepsilon_{1}\otimes\varepsilon_{5}\otimes\varepsilon_{2}]_{h,\mu}\\
 & \equiv-xh((z\varepsilon_{14}+x\varepsilon_{45})\otimes\varepsilon_{2}-\varepsilon_{1}\otimes(z\varepsilon_{23}+y\varepsilon_{35}))\\
 & \equiv0.
\end{align*}
Similarly in $F\slash IF$ we have
\begin{align*}
[\cdot]_{\mu,h}\mathrm{d}(\varepsilon_{1}\otimes\varepsilon_{45}\otimes\varepsilon_{2}) & \equiv x^{2}[1\otimes\varepsilon_{45}\otimes\varepsilon_{2}]_{\mu,h}-x[\varepsilon_{1}\otimes\varepsilon_{5}\otimes\varepsilon_{2}]_{\mu,h}+z[\varepsilon_{1}\otimes\varepsilon_{4}\otimes\varepsilon_{2}]_{\mu,h}+w^{2}[\varepsilon_{1}\otimes\varepsilon_{45}\otimes1]_{\mu,h}\\
 & \equiv-x[\varepsilon_{1}\otimes\varepsilon_{5}\otimes\varepsilon_{2}]_{\mu,h}\\
 & \equiv0
\end{align*}
where we used the fact that $\varepsilon_{1}F_{3}\in\mathfrak{m}F_{4}$
and $\varepsilon_{2}F_{3}\in\mathfrak{m}F_{4}$. \end{proof}

\subsection{Multiplicators}

Having discussed associators, we now wish to discuss multiplicators.
Throughout this subsection, let $A$ be an MDG $R$-algebra, let $X$
be and $Y$ be MDG $A$-modules, and let $\varphi\colon X\to Y$ be
a chain map.

\begin{defn}\label{defnmulthompermcomp} The are two types of multiplicators
were are interested in:
\begin{enumerate}
\item The \textbf{multiplicator }of $\varphi$ is the chain map, denoted
$[\cdot]_{\varphi}$, from $A\otimes_{R}X$ to $Y$ defined by
\[
[\cdot]_{\varphi}:=\varphi\mu-\mu(1\otimes\varphi).
\]
Note that we use $\mu$ to denote both $A$-scalar multiplications
$\mu_{X}$ and $\mu_{Y}$ where context makes clear which multiplication
$\mu$ refers to. We denote by $[\cdot,\cdot]_{\varphi}\colon A\times X\to Y$
(or more simply by $[\cdot,\cdot]$ if context is clear) to be the
unique graded $R$-bilinear map which corresponds to $[\cdot]_{\varphi}$
(in order to avoid confusion with the associator, we will \emph{always
}keep $\varphi$ in the subscript of $[\cdot]_{\varphi}$). Thus we
have
\[
[a\otimes x]_{\varphi}=\varphi(ax)-a\varphi(x)=[a,x]
\]
for all $a\in A$ and $x\in X$. We say $\varphi$ is \textbf{multiplicative
}if $[\cdot]_{\varphi}=0$. 
\item The \textbf{$2$-multiplicator }of $\varphi$ is the chain map, denoted
$[\cdot]_{\varphi}^{(2)}$, from $A\otimes_{R}A\otimes_{R}X$ to $Y$
defined by
\[
[\cdot]_{\varphi}^{(2)}:=\varphi[\cdot]_{\mu}-[\cdot]_{\mu}(1\otimes1\otimes\varphi)
\]
where we write $[\cdot]_{\mu}$ to denote both the associator of $X$
and the associator $Y$ where context makes clear which multiplication
$\mu$ refers to. We denote by $[\cdot,\cdot,\cdot]_{\varphi}\colon A\times X\to Y$
to be the unique graded $R$-bilinear map which corresponds to $[\cdot]_{\varphi}^{(2)}$
(in order to avoid confusion with the associator, we will \emph{always
}keep $\varphi$ in the subscript of $[\cdot,\cdot,\cdot]_{\varphi}$).
Thus we have
\[
[a_{1}\otimes a_{2}\otimes x]_{\varphi}^{(2)}=\varphi([a_{1},a_{2},x])-[a_{1},a_{2},\varphi(x)]=[a_{1},a_{2},x]_{\varphi}
\]
for all $a_{1},a_{2}\in A$ and $x\in X$. We say $\varphi$ is \textbf{$2$-multiplicative
}if $[\cdot]_{\varphi}^{(2)}=0$.
\end{enumerate}
\end{defn}

~~~~~~Let $A$ and $B$ be MDG $R$-algebras and let $\varphi\colon A\to B$
be a chain map such that $\varphi(1)=1$. Recall that we view $B$
as an $A$-module via the $A$-scalar multiplication map defined by
$a\cdot b=\varphi(a)b$. In this case, the multiplicator of $\varphi$
is just a special case of the usual associator of $B$ viewed as an
$A$-module. Indeed, we have
\begin{align*}
[a_{1},a_{2},1] & =(a_{1}a_{2})\cdot1-a_{1}\cdot(a_{2}\cdot1)\\
 & =\varphi(a_{1}a_{2})-\varphi(a_{1})\varphi(a_{2})\\
 & =\varphi(a_{1}a_{2})-a_{1}\cdot\varphi(a_{2})\\
 & =[a_{1},a_{2}]
\end{align*}
for all $a_{1},a_{2}\in A$. In particular, if $B$ is associative
as an $A$-module, then $\varphi\colon A\to B$ is multiplicative.
The converse on the other hand need not hold as can be seen in the
following example:

\begin{example}\label{examplemultiplicative} We continue with Example~(\ref{example1})
where $R=\Bbbk[x,y,z,w]$, $\boldsymbol{m}=x^{2},w^{2},zw,xy,y^{2}z^{2}$,
and $F$ is the minimal free resolution of $R\slash\boldsymbol{m}$
over $R$. Let $\boldsymbol{m}'=x^{2},w^{2},y^{2}z^{2}$ and let $E'$
be the Koszul algebra which resolves $R\slash\boldsymbol{m}'$ over
$R$. We denote the standard homogeneous basis of $E'$ by $e_{\sigma}'$
and we denote the standard homogeneous basis of $F$ by $e_{\sigma}$.
Choose a chain map $\iota'\colon E'\to F$ which lifts the projection
$R\slash\boldsymbol{m}'\to R\slash\boldsymbol{m}$ such that $\iota'$
is unital and respects the multigrading. Then $\iota'$ being a chain
map together with the fact that it is unital and respects the multigrading
forces us to have
\begin{align*}
\iota'(e_{1}') & =e_{1} & \iota'(e_{12}') & =e_{12}\\
\iota'(e_{2}') & =e_{2} & \iota'(e_{13}') & =yz^{2}e_{14}+xe_{45}\\
\iota'(e_{3}') & =e_{5} & \iota'(e_{23}') & =y^{2}ze_{23}+we_{35}.
\end{align*}
On the other hand, $\iota'$ can be defined at $e_{123}'$ in two
possible ways. Assume that it is defined by
\[
\iota'(e_{123}')=yz^{2}e_{124}+xyze_{234}-xwe_{345}.
\]
We can picture $\iota'(E')$ inside of $F$ as being supported on
the red-shaded subcomplex below: \begin{center}\begin{tikzpicture}[scale=1]

\draw[fill=gray!20] (0,0) -- (3,-0.5) -- (3.2,1.2)-- (0,0); 
\draw[fill=gray!20] (0,0) -- (1.5,1.5) -- (3.2,1.2)-- (0,0); 
\draw[] (1.5,1.5) -- (3,-0.5);

\draw[fill=red!20] (3,-0.5) -- (5,0.2) -- (3.2,1.2);
\draw[fill=red!20] (3,-0.5) -- (1.5,1.5) -- (3.2,1.2);
\draw[] (3,-0.5) -- (3.2,1.2);
\draw[fill=red!20] (0,0) -- (1.5,1.5) -- (3.2,1.2);

\node[circle, fill=black, inner sep=1pt, label=left:$x^2 $] (a) at (0,0) {};
\node[circle, fill=black, inner sep=1pt, label=above:$w^2 $] (b) at (1.5,1.5) {};
\node[circle, fill=black, inner sep=1pt, label=below:$zw $] (c) at (3,-0.5) {};
\node[circle, fill=black, inner sep=1pt, label=above:$xy $] (d) at (3.2,1.2) {}; 
\node[circle, fill=black, inner sep=1pt, label=right:$y^2 z^2 $] (e) at (5,0.2) {};

\draw[color=black!100] (1.5,1.5) -- (3,-0.5);
\draw[color=black!100] (0,0) -- (3.2,1.2);

\end{tikzpicture} \end{center}We claim that $\iota'$ is \emph{not }multiplicative. To see this,
assume for a contradiction that it was multiplicative. Then we would
have
\begin{align*}
0 & =\iota'(0)\\
 & =\iota'([e_{1}',e_{2}',e_{3}'])\\
 & =[\iota'(e_{1}'),\iota'(e_{2}'),\iota'(e_{3}')]\\
 & =[e_{1},e_{2},e_{5}]\\
 & \neq0,
\end{align*}
which is a contradiction. Next let $\boldsymbol{m}''=x^{2},w^{2},zw,xy$
and let $T''$ be the Taylor algebra which resolves $R\slash\boldsymbol{m}''$
over $R$. We denote the standard homogeneous basis of $T''$ by $e_{\sigma}''$.
Choose a comparison map $\iota''\colon T''\to F$ which lifts the
projection $R\slash\boldsymbol{m}''\to R\slash\boldsymbol{m}$ such
that $\iota''$ is unital and respects the multigrading. Then $\iota''$
being a chain map together with the fact that it is unital and multigraded
forces us to have $\iota''(e_{\sigma}'')=e_{\sigma}$ for all $\sigma$.
We can picture $\iota''(T'')$ inside of $F$ as being supported on
the blue-shaded subcomplex  below: \begin{center}\begin{tikzpicture}[scale=1]

\draw[fill=gray!20] (0,0) -- (3,-0.5) -- (3.2,1.2)-- (0,0); 
\draw[fill=gray!20] (0,0) -- (1.5,1.5) -- (3.2,1.2)-- (0,0); 
\draw[fill=gray!20] (3,-0.5) -- (5,0.2) -- (3.2,1.2);
\draw[] (1.5,1.5) -- (3,-0.5);

\draw[fill=blue!20] (0,0) -- (1.5,1.5) -- (3.2,1.2);
\draw[fill=blue!20] (3,-0.5) -- (1.5,1.5) -- (3.2,1.2);
\draw[] (3,-0.5) -- (3.2,1.2);
\draw[fill=blue!20] (0,0) -- (1.5,1.5) -- (3,-0.5);

\node[circle, fill=black, inner sep=1pt, label=left:$x^2 $] (a) at (0,0) {};
\node[circle, fill=black, inner sep=1pt, label=above:$w^2 $] (b) at (1.5,1.5) {};
\node[circle, fill=black, inner sep=1pt, label=below:$zw $] (c) at (3,-0.5) {};
\node[circle, fill=black, inner sep=1pt, label=above:$xy $] (d) at (3.2,1.2) {}; 
\node[circle, fill=black, inner sep=1pt, label=right:$y^2 z^2 $] (e) at (5,0.2) {};

\draw[color=black!100] (1.5,1.5) -- (3,-0.5);
\draw[color=black!100] (0,0) -- (3.2,1.2);

\end{tikzpicture} \end{center}This time it is easy to check that $\iota''$ \emph{is} multiplicative.
However notice that $F$ is \emph{not }associative as a $T''$-module
since $[e_{1},e_{2},e_{5}]\neq0$. \end{example}

\begin{example}\label{example} Continuing with the notation as in
Example~(\ref{example1}), let $T$ be the Taylor algebra resolution
of $R\slash\boldsymbol{m}$ over $R$. We denote the Taylor multiplication
on $T$ by $\nu$. Recall that the multiplication $\mu$ on $F$ described
in Example~(\ref{example1}) arises from the Taylor multiplication
in the sense that there is a projection $\pi\colon T\to F$ such that
$\mu=\pi\nu\iota^{\otimes2}$ where $\iota\colon F\to T$ is the inclusion
map. Observe that
\begin{align*}
[e_{1},e_{25}]_{\pi} & =\pi(e_{1}\star_{\nu}e_{25})-\pi(e_{1})\star_{\mu}\pi(e_{25})\\
 & =\pi(e_{125})-e_{1}\star_{\mu}(y^{2}ze_{23}+we_{35})\\
 & =yz^{2}e_{124}+xyze_{234}+xwe_{345}-y^{2}ze_{123}-yzwe_{134}-xwe_{345}\\
 & =-yz\mathrm{d}(e_{1234})\\
 & =[e_{1},e_{5},e_{2}]_{\mu}\\
 & \neq0.
\end{align*}
Thus $\pi\colon T\to F$ is not multiplicative. \end{example}

\subsubsection{Multiplicator Identities}

We want to familiarize ourselves with the multiplicator of $\varphi\colon X\to Y$,
so in this subsubsection we collect together some identities which
the multiplicator satisfies:
\begin{itemize}
\item For all $a\in A$ homogeneous and $x\in X$, we have the Leibniz rule:
\[
\mathrm{d}[a,x]=[\mathrm{d}a,x]+(-1)^{|a|}[a,\mathrm{d}x].
\]
\item For all $a\in A$ homogeneous and $x\in X$ homogeneous, we have
\begin{equation}
[a,x]=(-1)^{|a||x|}[x,a].\label{eq:identity3-1-1-1}
\end{equation}
\item For all $a_{1},a_{2}\in A$ and $x\in X$, we have
\begin{equation}
a_{1}[a_{2},x]-[a_{1}a_{2},x]+[a_{1},a_{2}x]=[a_{1},a_{2},x]_{\varphi}\label{eq:identity1-1-1-1}
\end{equation}
\end{itemize}
Furthermore, if $Z$ is another MDG $A$-module and $\psi\colon Y\to Z$
is another chain map, then for all $a\in A$ and $x\in X$, we have
\begin{equation}
[a,x]_{\psi\varphi}=\psi([a,x]_{\varphi})+[a,\varphi x]_{\psi}\label{eq:identityjklfds}
\end{equation}
Next let $A$ and $B$ be MDG $R$-algebras and let $\varphi\colon A\to B$
be a chain map such that $\varphi(1)=1$. Then we can rewrite (\ref{eq:identity1-1-1-1})
as follows: for all $a_{1},a_{2},a_{3}\in A$, we have
\begin{equation}
\varphi(a_{1})[a_{2},a_{3}]-[a_{1}a_{2},a_{3}]+[a_{1},a_{2}a_{3}]-[a_{1},a_{2}]\varphi(a_{3})=[\varphi a_{1},\varphi a_{2},\varphi a_{3}]-\varphi([a_{1},a_{2},a_{3}])\label{eq:identity1-1-1}
\end{equation}
Indeed, this follows from the fact that
\[
[\varphi a_{1},\varphi a_{2},\varphi a_{3}]=[a_{1},a_{2},\varphi a_{3}]-[a_{1},a_{2}]\varphi(a_{3}).
\]
Furthermore, in this case we also have
\begin{equation}
[a,a]_{\varphi}=0\label{eq:identity12345}
\end{equation}
for all $a\in A$ where $|a|$ is odd. 

\subsubsection{The Maximal Multiplicative Quotient}

The \textbf{multiplicator complex }of $\varphi$, denoted $[Y]_{\varphi}$,
is the $R$-subcomplex of $Y$ given by $[Y]_{\varphi}:=\mathrm{im}\,[\cdot]_{\varphi}$,
so the underlying graded module of $[Y]_{\varphi}$
\[
[Y]_{\varphi}:=\mathrm{span}_{R}\{[a,x]_{\varphi}\mid a\in A\text{ and }x\in X\},
\]
and the differential of $[Y]_{\varphi}$ is simply the restriction
of the differential of $Y$ to $[Y]_{\varphi}$. In order to avoid
confusion with the associator complex, we will always write $\varphi$
in the subscript of $[Y]_{\varphi}$. Even though the multiplicator
complex of $\varphi$ is closed under the differential, it need not
be closed under $A$-scalar multiplication. In other words, if $a_{1},a_{2}\in A$
and $x\in X$, then it need not be the case that $a_{1}[a_{2},x]_{\varphi}\in[Y]_{\varphi}$.
We denote by $\langle Y\rangle_{\varphi}$ to be the MDG $A$-submodule
of $Y$ generated by $[Y]_{\varphi}$. In other words, $\langle Y\rangle_{\varphi}$
is the smallest MDG $A$-submodule of $Y$ which contains $[Y]_{\varphi}$.
Unlike the associator submodule, the multiplicator submodule is difficult
to describe in terms of an $R$-span of elements. Indeed, as a first
guess, one might think that $\langle Y\rangle_{\varphi}$ is given
by
\begin{equation}
\mathrm{span}_{R}\{[a,x]_{\varphi}\mid a\in A\text{ and }x\in X\}.\label{eq:multiplci-1}
\end{equation}
However this is clearly incorrect in general as we may need to adjoin
elements of the form $a_{1}[a_{2},x]$ to (\ref{eq:multiplci-1}).
As a second guess, one might think that $\langle Y\rangle_{\varphi}$
is given by
\begin{equation}
\mathrm{span}_{R}\{a_{1}[a_{2},x]_{\varphi}\mid a_{1},a_{2}\in A\text{ and }x\in X\}.\label{eq:multiplci}
\end{equation}
However this is not correct in general either since the identity
\[
a_{1}(a_{2}[a_{3},x]_{\varphi})=(a_{1}a_{2})[a_{3},x]_{\varphi}-[a_{1},a_{2},[a_{3},x]_{\varphi}]
\]
tells us that should really adjoin elements of the form $a_{1}[a_{2},a_{3},[a_{4},x]]$
to (\ref{eq:multiplci}) as well. As a third guess, one might think
that $\langle Y\rangle_{\varphi}$ is given by
\begin{equation}
\mathrm{span}_{R}\{a_{1}[a_{2},x]_{\varphi},\,a_{1}[a_{2},a_{3},[a_{4},x]_{\varphi}]\mid a_{1},a_{2},a_{3},a_{4}\in A\text{ and }x\in X\}.\label{eq:multiplicatorcomplexrspan-1}
\end{equation}
Again this is not correct in general since the identity
\[
a_{1}(a_{2}[a_{3},a_{4},[a_{5},x]_{\varphi}])=(a_{1}a_{2})[a_{3},a_{4},[a_{5},x]]-[a_{1},a_{2},[a_{3},a_{4},[a_{5},x]_{\varphi}]].
\]
tells us that we should really adjoin elements of the form $a_{1}[a_{2},a_{3},[a_{4},a_{5},[a_{6},x]_{\varphi}]]$
to (\ref{eq:multiplicatorcomplexrspan-1}) as well. The problem continues
getting worse with no end in sight. It turns out however, that if
$\varphi$ is $2$-multiplicative, then $\langle Y\rangle_{\varphi}$
given by (\ref{eq:multiplci-1}).

\begin{prop}\label{prop} If $\varphi$ is $2$-multiplicative, then
for all $a_{1},a_{2},a_{3}\in A$ and $x\in X$ we have
\begin{equation}
a_{1}[a_{2},x]_{\varphi}=[a_{1}a_{2},x]_{\varphi}-[a_{1},a_{2}x]_{\varphi}\quad\text{and}\quad[a_{1},a_{2},[a_{3},x]_{\varphi}]=[[a_{1},a_{2},a_{3}],x]_{\varphi}-[a_{1},[a_{2},a_{3},x]]_{\varphi}.\label{eq:2multiident}
\end{equation}
In particular, $\langle Y\rangle_{\varphi}$ is given by (\ref{eq:multiplci-1}).
\end{prop}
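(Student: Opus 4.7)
The plan is to derive both identities in (\ref{eq:2multiident}) directly from the identity (\ref{eq:identity1-1-1-1}) together with the hypothesis that $\varphi$ is $2$-multiplicative, and then to argue that the simple $R$-span in (\ref{eq:multiplci-1}) is already closed under the operations needed to build $\langle Y\rangle_{\varphi}$.

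For the first identity, identity (\ref{eq:identity1-1-1-1}) says $a_{1}[a_{2},x]_{\varphi}-[a_{1}a_{2},x]_{\varphi}+[a_{1},a_{2}x]_{\varphi}=[a_{1},a_{2},x]_{\varphi}^{(2)}$. Since $\varphi$ is $2$-multiplicative, the right-hand side vanishes, giving the first displayed equality immediately.

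For the second identity, I would expand $[a_{1},a_{2},[a_{3},x]_{\varphi}]=(a_{1}a_{2})[a_{3},x]_{\varphi}-a_{1}(a_{2}[a_{3},x]_{\varphi})$ using the first identity three times: once to rewrite $(a_{1}a_{2})[a_{3},x]_{\varphi}$, once to rewrite $a_{2}[a_{3},x]_{\varphi}$ (and then apply $a_{1}\cdot$ to both resulting basic multiplicators), and then invoke the first identity a second and third time on the two terms $a_{1}[a_{2}a_{3},x]_{\varphi}$ and $a_{1}[a_{2},a_{3}x]_{\varphi}$. After cancelling the matching $[a_{1}a_{2},a_{3}x]_{\varphi}$ terms, $R$-bilinearity of $[\cdot,\cdot]_{\varphi}$ lets me recognize the remaining four terms as $[(a_{1}a_{2})a_{3}-a_{1}(a_{2}a_{3}),x]_{\varphi}$ and $[a_{1},(a_{2}a_{3})x-a_{2}(a_{3}x)]_{\varphi}$, which are exactly $[[a_{1},a_{2},a_{3}],x]_{\varphi}$ and $[a_{1},[a_{2},a_{3},x]]_{\varphi}$. (I would double-check the final sign convention against the statement; this is purely a bookkeeping step.)

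For the last assertion, let $W=\operatorname{span}_{R}\{[a,x]_{\varphi}\mid a\in A,\,x\in X\}$. The containment $W\subseteq\langle Y\rangle_{\varphi}$ is immediate. For the reverse containment, I would show $W$ is an MDG $A$-submodule of $Y$ containing $[Y]_{\varphi}$; the minimality of $\langle Y\rangle_{\varphi}$ then forces $\langle Y\rangle_{\varphi}\subseteq W$. Closure of $W$ under the differential follows from the Leibniz rule $\mathrm{d}[a,x]_{\varphi}=[\mathrm{d}a,x]_{\varphi}+(-1)^{|a|}[a,\mathrm{d}x]_{\varphi}$. Closure under $A$-scalar multiplication follows directly from the first identity in (\ref{eq:2multiident}), which exhibits $a_{1}[a_{2},x]_{\varphi}$ as an element of $W$.

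The main thing to be careful about is that $W$ as defined is only closed under left-multiplication by a single element of $A$; because $Y$ is not assumed associative, being an MDG $A$-submodule does not automatically follow from closure under single multiplications. However, the inductive discussion preceding the proposition shows that the only additional elements one ever needs to adjoin arise from iterating associators against basic multiplicators, i.e.\ expressions of the form $[a_{1},a_{2},[a_{3},x]_{\varphi}]$ and their further nestings. The second identity in (\ref{eq:2multiident}) handles precisely these: it rewrites such an iterated expression as a sum of two basic multiplicators in $W$, and then one iterates. So the main obstacle — making sure no ``new'' shape of expression escapes the simple span — is exactly what the two identities together neutralize.
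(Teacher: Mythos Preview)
Your proposal is correct. The first identity is obtained exactly as in the paper, by specializing (\ref{eq:identity1-1-1-1}) under the hypothesis $[\cdot]_{\varphi}^{(2)}=0$. For the second identity your route differs slightly from the paper's: the paper first records, without any hypothesis on $\varphi$, the six-term identity
\[
a_{1}[a_{2},a_{3},x]_{\varphi}=[a_{1}a_{2},a_{3},x]_{\varphi}-[a_{1},a_{2}a_{3},x]_{\varphi}+[a_{1},a_{2},a_{3}x]_{\varphi}-[[a_{1},a_{2},a_{3}],x]_{\varphi}+[a_{1},[a_{2},a_{3},x]]_{\varphi}-[a_{1},a_{2},[a_{3},x]_{\varphi}],
\]
and then kills the four $2$-multiplicator terms to read off the second equality in (\ref{eq:2multiident}) directly. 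You instead expand $[a_{1},a_{2},[a_{3},x]_{\varphi}]$ and apply the already-established first identity three times; this is a perfectly valid and arguably more transparent derivation, though it does not yield the general six-term identity as a byproduct. Your sign caveat is warranted: a careful run of your computation gives $[a_{1},a_{2},[a_{3},x]_{\varphi}]=[[a_{1},a_{2},a_{3}],x]_{\varphi}+[a_{1},[a_{2},a_{3},x]]_{\varphi}$, so the minus sign in the displayed statement appears to be a typo (this does not affect the ``in particular'' clause). For the final assertion, your closure argument is essentially the paper's ``easy induction''; note that once the first identity gives $a\cdot W\subseteq W$ for every $a\in A$, that alone already makes $W$ an MDG $A$-submodule, so your worry about non-associativity is not actually an obstruction there.
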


\begin{proof}\label{proof} A straightforward calculation yields
\begin{align*}
a_{1}[a_{2},a_{3},x]_{\varphi} & =[a_{1}a_{2},a_{3},x]_{\varphi}-[a_{1},a_{2}a_{3},x]_{\varphi}+[a_{1},a_{2},a_{3}x]_{\varphi}-[[a_{1},a_{2},a_{3}],x]_{\varphi}+[a_{1},[a_{2},a_{3},x]]_{\varphi}-[a_{1},a_{2},[a_{3},x]_{\varphi}].
\end{align*}
Using this identity together with the identity (\ref{eq:identity1-1-1-1}),
we see that if $\varphi$ is $2$-multiplicative, then we obtain (\ref{eq:2multiident}).
This implies all elements of the form $a_{1}[a_{2},x]$ and $a_{1}[a_{2},a_{3},[a_{4},x]]$
belong to (\ref{eq:multiplci-1}). An easy induction argument shows
that $\langle Y\rangle_{\varphi}$ is given by (\ref{eq:multiplci-1}).
\end{proof}

\section{The Associator Functor}

Let $X$ and $Y$ be MDG $A$-modules and let $\varphi\colon X\to Y$
be a chain map. If $\varphi$ is multiplicative, then observe that
for all $a_{1},a_{2},a_{3}\in A$ and $x\in X$, we have 
\begin{equation}
\varphi(a_{1}[a_{2},a_{3},x])=a_{1}[a_{2},a_{3},\varphi x].\label{eq:asfunctorid}
\end{equation}
Thus $\varphi$ restricts to an MDG $A$-module homomorphism $\varphi\colon\langle X\rangle\to\langle Y\rangle$.
In particular, we obtain a functor from the category of MDG $A$-module
to itself which sends an MDG $A$-module $X$ to the MDG associator
submodule $\langle X\rangle$ and which sends an MDG $A$-module homomorphism
$\varphi\colon X\to Y$ to its restriction $\varphi|_{\langle X\rangle}\colon\langle X\rangle\to\langle Y\rangle$.
We call this the \textbf{associator functor}.

\subsection{Failure of Exactness}

The associator functor need not be exact. Indeed, let \begin{equation}\label{diagram23423}\begin{tikzcd} 0 \arrow[r] & X \arrow[r,"\varphi "]  & Y \arrow[r," \psi "] & Z  \arrow[r] & 0 \end{tikzcd}\end{equation}be
a short exact sequence of MDG $A$-modules. Then we obtain an induced
sequence of MDG $A$-modules \begin{equation}\label{diagramfjklsdfmklsd}\begin{tikzcd} 0 \arrow[r] & \langle X \rangle  \arrow[r," \varphi   "]  & \langle Y \rangle \arrow[r,"  \psi   "] & \langle Z \rangle \arrow[r] & 0 \end{tikzcd}\end{equation}which
is exact at $\langle X\rangle$ and $\langle Z\rangle$ but not necessarily
exact at $\langle Y\rangle$. In order to ensure exactness of (\ref{diagramfjklsdfmklsd}),
we need to place a condition on (\ref{diagram23423}). This leads
us to consider the following definition:

\begin{defn}\label{defn} Let $X$ be an MDG $A$-submodule of $Y$.
We say $Y$ is an \textbf{associative extension }of $X$ if
\[
\langle X\rangle=X\cap\langle Y\rangle.
\]
\end{defn}

~~~~~~It is easy to see that (\ref{diagramfjklsdfmklsd}) is
a short exact sequence of MDG $A$-modules if and only if $Y$ is
an associative extension of $\varphi(X)$. In this case, we obtain
a long exact sequence in homology: \begin{equation}\label{diagram3}\begin{tikzcd}[row sep=40]  && \cdots \arrow[r] \arrow[d, phantom, ""{coordinate, name=Z'}] & \mathrm{H}_{i+1} \langle Z   \rangle \arrow[dll,  swap, rounded corners, to path={ -- ([xshift=2ex]\tikztostart.east) |- (Z') [near end]\tikztonodes -| ([xshift=-2ex]\tikztotarget.west) -- (\tikztotarget)}] \\  & \mathrm{H}_{i} \langle X \rangle \arrow[r] & \mathrm{H}_{i} \langle Y \rangle \arrow[r] \arrow[d, phantom, ""{coordinate, name=Z}] & \mathrm{H}_{i} \langle Z \rangle \arrow[dll,  swap, rounded corners, to path={ -- ([xshift=2ex]\tikztostart.east) |- (Z) [near end]\tikztonodes -| ([xshift=-2ex]\tikztotarget.west) -- (\tikztotarget)}] \\ & \mathrm{H}_{i-1} \langle X \rangle \arrow[r] & \cdots \end{tikzcd}\end{equation}An
immediate consequence of this long exact sequence is the following
theorem:

\begin{theorem}\label{theorem3.1} Let $X$ be an MDG $A$-module
and suppose $Y$ is an associative extension of $X$. Then $Y$ is
homologically associative if and only if $X$ and $Y\slash X$ are
homologically associative. \end{theorem}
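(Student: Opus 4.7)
The plan is to read off everything from the long exact sequence in associator homology that was built in the paragraph immediately preceding the theorem. The assumption that $Y$ is an associative extension of $X$ is precisely what makes
\[
0 \longrightarrow \langle X \rangle \longrightarrow \langle Y \rangle \longrightarrow \langle Y/X \rangle \longrightarrow 0
\]
a genuine short exact sequence of MDG $A$-modules (as noted after the definition of associative extension), so the displayed long exact sequence of the form $\cdots \to \mathrm{H}_i\langle X\rangle \to \mathrm{H}_i\langle Y\rangle \to \mathrm{H}_i\langle Y/X\rangle \to \mathrm{H}_{i-1}\langle X\rangle \to \cdots$ is available for free.

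For the ``if'' direction, I would assume $\mathrm{H}\langle X\rangle = 0$ and $\mathrm{H}\langle Y/X\rangle = 0$ and observe that in each degree $i$ the term $\mathrm{H}_i\langle Y\rangle$ is sandwiched between two zero groups, so exactness forces $\mathrm{H}_i\langle Y\rangle = 0$. This gives that $Y$ is homologically associative.

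For the ``only if'' direction, I would assume $\mathrm{H}\langle Y\rangle = 0$. Then exactness at $\mathrm{H}_i\langle Y/X\rangle$ makes the connecting map $\mathrm{H}_i\langle Y/X\rangle \to \mathrm{H}_{i-1}\langle X\rangle$ injective, and exactness at $\mathrm{H}_{i-1}\langle X\rangle$ makes it surjective, so it is an isomorphism $\mathrm{H}_i\langle Y/X\rangle \cong \mathrm{H}_{i-1}\langle X\rangle$ for every $i$. To conclude both sides vanish one cascades this isomorphism downward using the standing boundedness assumption (the associator submodules we care about are bounded below), so that from a degree where $\mathrm{H}_{i-1}\langle X\rangle = 0$ one immediately gets $\mathrm{H}_i\langle Y/X\rangle = 0$ and vice versa.

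The main obstacle is really just the bookkeeping for the reverse direction: the forward direction is a one-line diagram chase, but for the reverse one must justify that the string of isomorphisms $\mathrm{H}_i\langle Y/X\rangle \cong \mathrm{H}_{i-1}\langle X\rangle$ collapses to zero everywhere, which is where a bounded-below hypothesis (implicit in the paper's setting of resolutions) is used. Everything else is formal from the long exact sequence.
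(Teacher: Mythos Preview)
Your overall strategy---read everything off the long exact sequence in associator homology---is exactly what the paper intends; the paper gives no separate proof and simply labels the theorem ``an immediate consequence of this long exact sequence.'' Your ``if'' direction is correct and matches that reasoning.

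The ``only if'' direction, however, has a real gap. From $\mathrm{H}\langle Y\rangle=0$ you correctly extract isomorphisms $\mathrm{H}_i\langle Y/X\rangle\cong\mathrm{H}_{i-1}\langle X\rangle$ for every $i$, but these do \emph{not} cascade in the way you describe: each isomorphism relates the homology of $\langle Y/X\rangle$ in one degree to the homology of a \emph{different} object, $\langle X\rangle$, in the adjacent degree---not to itself in a lower degree. A bounded-below hypothesis only tells you both sides vanish for $i\ll 0$, which you already knew; it gives no leverage in the range where the complexes actually live. At the level of bare complexes one can easily write down a short exact sequence $0\to C'\to C\to C''\to 0$ with $C$ acyclic, $C'$ and $C''$ bounded below, and $\mathrm{H}(C')$, $\mathrm{H}(C'')$ both nonzero (e.g.\ $C'=R$ in degree $n$, $C=(R\xrightarrow{1}R)$ in degrees $n{+}1,n$, $C''=R$ in degree $n{+}1$). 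So the long exact sequence by itself, even with boundedness below, does not yield the reverse implication. The paper offers no further argument here either, so either an additional hypothesis is tacitly in force or the ``only if'' direction is stated more strongly than the long exact sequence alone justifies; in any case your cascade step does not close the gap.
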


\subsection{An Application of the Long Exact Sequence}

In this subsection, we give an application of the long exact sequence
(\ref{diagram3}). Assume that $(R,\mathfrak{m})$ is a local ring.
Let $I\subseteq\mathfrak{m}$ be an ideal of $R$, let $F$ be the
minimal free resolution of $R\slash I$ over $R$, and let $r\in\mathfrak{m}$
be an $(R\slash I)$-regular element. Then the mapping cone $F+eF$
is the minimal free resolution of $R\slash\langle I,r\rangle$ over
$R$. Here, $e$ is thought of as an exterior variable of degree $1$,
and the differential of the mapping cone is given by
\[
\mathrm{d}(a+eb)=\mathrm{d}(a)+rb-e\mathrm{d}(b)
\]
for all $a,b\in F$. Now equip $F$ with a multiplication $\mu$ giving
it the structure of an MDG algebra. We give $F+eF$ the structure
of an MDG $R$-algebra by extending the multiplication on $F$ to
a multiplication on $F+eF$ by setting
\[
(a+eb)(c+ed)=ac+e(bc+(-1)^{|a|}ad)
\]
for all $a,b,c,d\in F$. In particular, note that $(eb)c=e(bc)$ for
all $b,c\in F$, so $e$ belongs to the nucleus of $F+eF$. We denote
by $\iota\colon F\to F+eF$ to be the inclusion map. We can view $F+eF$
either as an MDG $F$-module or as an MDG $R$-algebra, thus we potentially
have two different associator complexes to consider. It turns out
however that these give rise to the same $R$-complex since $e$ is
in the nucleus of $F+eF$. This is the third main theorem from the
introduction:

\begin{theorem}\label{theorem} Let $\langle F+eF\rangle_{F}$ be
the associator $F$-submodule of $F+eF$ and let $\langle F+eF\rangle$
be the associator $(F+eF)$-ideal of $F+eF$. Then
\begin{equation}
\langle F+eF\rangle_{F}=\langle F\rangle+e\langle F\rangle=\langle F+eF\rangle.\label{eq:idfjklss}
\end{equation}
In particular, $F+eF$ is an associative extension of $F$. More generally,
suppose $\boldsymbol{r}=r_{1},\dots,r_{m}$ is a maximal $(R\slash I)$-regular
sequence contained in $\mathfrak{m}$. We set
\[
F+\boldsymbol{e}F=F+\sum_{i=1}^{m}e_{i}F
\]
to be minimal $R$-free resolution of $R\slash\langle I,\boldsymbol{r}\rangle$
obtained by iterating the mapping cone construction as above, where
$e_{i}$ is an exterior variable of degree $1$ which satisfies $\mathrm{d}e_{i}=r_{i}$,
and where we extend the multiplication of $F$ to a multiplication
on $F+\boldsymbol{e}F$ by extending it from $F+\sum_{i=1}^{k}e_{i}F$
to $F+\sum_{i=1}^{k+1}e_{i}F$ for each $1\leq k<m$ as above. Then
\begin{equation}
\langle F+\boldsymbol{e}F\rangle_{F}=\langle F\rangle+\boldsymbol{e}\langle F\rangle=\langle F+\boldsymbol{e}F\rangle\label{eq:dfsd}
\end{equation}
where we set $\boldsymbol{e}\langle F\rangle:=\sum_{i=1}^{m}e_{i}\langle F\rangle$.
In particular, $F+\boldsymbol{e}F$ is an associative extension of
$F$. \end{theorem}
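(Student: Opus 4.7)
The plan is to reduce the whole statement to a direct associator computation on $F + eF$, exploiting that $e$ lies in the nucleus of $F+eF$ (as stated in the setup) together with the vanishing $(eb)(eb') = 0$ (forced by strict graded commutativity, since $|e|$ is odd). First I would record the three key identities
\[
[eb, a_2, a_3] = e[b, a_2, a_3], \quad [a_1, eb, a_3] = (-1)^{|a_1|} e[a_1, b, a_3], \quad [a_1, a_2, eb] = (-1)^{|a_1|+|a_2|} e[a_1, a_2, b],
\]
for homogeneous $a_1, a_2, a_3, b \in F$, plus the fact that any associator with two or more entries in $eF$ vanishes. These are immediate from the product formula $(a+eb)(c+ed) = ac + e(bc + (-1)^{|a|}ad)$ once one checks that the ``$F$-part'' of $(eb)x$ and of $x(eb)$ is always zero.

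Expanding $[x, y, z]$ by $R$-trilinearity for $x = a_1 + eb_1$, $y = a_2 + eb_2$, $z = a_3 + eb_3$, only the four displayed types of terms survive, giving
\[
[x, y, z] = [a_1, a_2, a_3] + e[b_1, a_2, a_3] + (-1)^{|a_1|} e[a_1, b_2, a_3] + (-1)^{|a_1|+|a_2|} e[a_1, a_2, b_3] \in \langle F\rangle + e\langle F\rangle.
\]
Hence $[F+eF] \subseteq \langle F\rangle + e\langle F\rangle$. Taking the $F$-submodule generated on both sides and using that $\langle F\rangle$ is already an $F$-submodule and that $c \cdot eb = (-1)^{|c|} e(cb)$, we obtain $\langle F+eF\rangle_F \subseteq \langle F\rangle + e\langle F\rangle$. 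For the reverse inclusion, every generator $c \cdot [a_2, a_3, x]$ of $\langle F\rangle$ is a fortiori in $\langle F+eF\rangle_F$, while $c \cdot e[a_2, a_3, x]$ coincides with $c \cdot [e a_2, a_3, x]$ via the first identity above, and thus also lies in $\langle F+eF\rangle_F$. This proves the first equality in (\ref{eq:idfjklss}).

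For the second equality, I claim that $\langle F\rangle + e\langle F\rangle$ is already closed under $(F+eF)$-multiplication, from which the chain $\langle F+eF\rangle \subseteq \langle F\rangle + e\langle F\rangle = \langle F+eF\rangle_F \subseteq \langle F+eF\rangle$ forces equality throughout. Indeed, for $\alpha, \beta \in \langle F\rangle$ and $c_1, c_2 \in F$, the product formula gives
\[
(c_1 + ec_2)(\alpha + e\beta) = c_1 \alpha + e\bigl(c_2 \alpha + (-1)^{|c_1|} c_1 \beta\bigr),
\]
and both $c_1 \alpha$ and $c_2 \alpha + (-1)^{|c_1|} c_1 \beta$ lie in $\langle F\rangle$ since $\langle F\rangle$ is an $F$-submodule. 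The associative extension statement then follows immediately from the direct-sum decomposition $F + eF = F \oplus eF$, under which $F \cap (\langle F\rangle + e\langle F\rangle) = \langle F\rangle$.

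For the iterated version (\ref{eq:dfsd}), I would induct on $m$, viewing $F^{(k+1)}$ as the single-variable mapping cone of $F^{(k)}$ along $e_{k+1}$; the regular-sequence hypothesis on $\boldsymbol{r}$ ensures $r_{k+1}$ is regular on $R/\langle I, r_1, \ldots, r_k\rangle$, so this really is a minimal free resolution. Applying the single-$e$ case with $F^{(k)}$ in place of $F$ gives $\langle F^{(k+1)}\rangle = \langle F^{(k)}\rangle + e_{k+1}\langle F^{(k)}\rangle$, and the inductive hypothesis on $F^{(k)}$ closes the argument. The main obstacle throughout is really the bookkeeping in the first paragraph: verifying the three nucleus identities and the two-$e$ vanishing cleanly from the explicit product formula (and tracking the Koszul signs); once those are in place the rest of the argument is purely formal.
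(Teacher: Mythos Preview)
Your proof is correct and follows essentially the same approach as the paper: both arguments use that $e$ lies in the nucleus to show every associator of $F+eF$ decomposes as an element of $\langle F\rangle + e\langle F\rangle$, then verify this set is closed under $(F+eF)$-multiplication and conclude the associative-extension claim from the direct-sum splitting $F+eF=F\oplus eF$, with the general case handled by induction. The only real difference is in how the three key identities are obtained: the paper derives them from the abstract associator identities (\ref{eq:identity3}) and (\ref{eq:identity2}), whereas you compute them directly from the product formula $(a+eb)(c+ed)=ac+e(bc+(-1)^{|a|}ad)$, which has the minor advantage that your signs $(-1)^{|a_1|}$ and $(-1)^{|a_1|+|a_2|}$ are correct (the paper's intermediate sign bookkeeping slips in a couple of places, though its conclusion is unaffected since only membership in $e\langle F\rangle$ matters).
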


\begin{proof}\label{proof} Since $e$ is in the nucleus, we have
$e[a,b,c]=[ea,b,c]$ for all $a,b,c\in F$. Similarly we have
\begin{align*}
[a,b,ec] & =-(-1)^{|a||b|+|a||ec|+|ec||b|}[ec,b,a]\\
 & =-(-1)^{|a||b|+|a||c|+|b||c|}[ec,b,a]\\
 & =-(-1)^{|a||b|+|a||c|+|b||c|}e[c,b,a]\\
 & =e[a,b,c]
\end{align*}
for all $a,b,c\in F$. Similarly we have
\begin{align*}
[a,eb,c] & =-(-1)^{|a||eb|+|a||c|}[eb,c,a]-(-1)^{|eb||c|+|a||c|}[c,a,eb]\\
 & =e(-(-1)^{|a||eb|+|a||c|}[b,c,a]-(-1)^{|eb||c|+|a||c|}[c,a,b])\\
 & =e[a,b,c]
\end{align*}
for all $a,b,c\in F$. Thus we have
\begin{align*}
(a+ea')[b+eb',c+ec',d+ed'] & =(a+ea')[b,c,d]+(a+ea')(e[b',c',d'])\\
 & =a[b,c,d]+ea'[b,c,d]+(-1)^{|a|}ea[b',c',d']\\
 & =a[b,c,d]+e(a'[b,c,d]+(-1)^{|a|}a[b',c',d'])
\end{align*}
for all $a,b,c,d,a',b',c',d'\in F$. Thus we obtain (\ref{eq:idfjklss}).
To see why (\ref{eq:idfjklss}) implies $F+eF$ is an associative
extension of $F$, note that
\[
F\cap\langle F+eF\rangle=F\cap(\langle F\rangle+e\langle F\rangle)=\langle F\rangle.
\]
The last part of the theorem follows from induction.\end{proof} 

\begin{theorem}\label{theorem} Let $\varepsilon=\inf\langle F\rangle$
and let $\delta=\sup\langle F\rangle$. Then $\inf\langle F+eF\rangle=\varepsilon$
and
\begin{equation}
\sup\langle F+eF\rangle=\begin{cases}
\delta & \text{if }r\text{ is }\mathrm{H}_{\delta}\langle F\rangle\text{-regular}\\
\delta+1 & \text{otherwise}
\end{cases}\label{eq:uha(F+eF)}
\end{equation}
Moreover, we have a short exact sequence of $R\slash\langle I,r\rangle$-modules
\begin{equation}\label{applicationlong2}\begin{tikzcd} 0 \arrow[r] & \mathrm{H}_i \langle F \rangle \slash r \mathrm{H}_i \langle F \rangle \arrow[r] & \mathrm{H}_i \langle F + eF \rangle \arrow[r] & 0 : _{ \mathrm{H}_{i-1} \langle F \rangle } r \arrow[r] & 0   \end{tikzcd}\end{equation}
for each $i\in\mathbb{Z}$. In particular, we have an isomorphism
of $R\slash\langle I,r\rangle$-modules
\[
\mathrm{H}_{\varepsilon}\langle F\rangle\slash r\mathrm{H}_{\varepsilon}\langle F\rangle\cong\mathrm{H}_{\varepsilon}\langle F+eF\rangle.
\]

\end{theorem}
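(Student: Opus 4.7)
The plan is to apply the long exact sequence~\eqref{diagram3} to the short exact sequence of MDG $F$-modules
\[
0 \to F \to F + eF \to (F+eF)/F \to 0.
\]
The previous theorem tells us that $F + eF$ is an associative extension of $F$, which is precisely the hypothesis needed for \eqref{diagram3} to apply. So the immediate first step is to identify the homology of the quotient's associator submodule and the connecting homomorphism.

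Next, I would identify $(F+eF)/F$ with a shift of $F$: the quotient has underlying graded module $eF$, and since the differential $\mathrm{d}(a+eb) = \mathrm{d}a + rb - e\,\mathrm{d}b$ descends modulo $F$ to the map $eb \mapsto -e\,\mathrm{d}b$, the chain complex $(F+eF)/F$ is isomorphic to $\Sigma F$. In particular $\langle (F+eF)/F\rangle \cong \Sigma\langle F\rangle$, so $\mathrm{H}_i\langle (F+eF)/F\rangle \cong \mathrm{H}_{i-1}\langle F\rangle$. I would then compute the connecting map $\partial$ by the standard snake-lemma recipe: given a cycle $\alpha \in \langle F\rangle_{i-1}$, lift $e\alpha \in \langle F+eF\rangle_i$ and observe that $\mathrm{d}(e\alpha) = r\alpha - e\,\mathrm{d}\alpha = r\alpha \in \langle F\rangle_{i-1}$. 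Thus $\partial\colon \mathrm{H}_{i-1}\langle F\rangle \to \mathrm{H}_{i-1}\langle F\rangle$ is multiplication by $r$.

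With $\partial$ identified as multiplication by $r$, the long exact sequence breaks into the short exact sequences
\[
0 \to \mathrm{H}_i\langle F\rangle/r\mathrm{H}_i\langle F\rangle \to \mathrm{H}_i\langle F+eF\rangle \to 0:_{\mathrm{H}_{i-1}\langle F\rangle} r \to 0,
\]
which is exactly \eqref{applicationlong2}. The claims about $\inf$ and $\sup$ would then follow by reading off the two outer terms. For $i < \varepsilon$ both outer terms vanish, so $\mathrm{H}_i\langle F+eF\rangle = 0$; at $i = \varepsilon$, the module $\mathrm{H}_\varepsilon\langle F\rangle$ is nonzero and finitely generated over the local ring $R$, and $r \in \mathfrak{m}$, so Nakayama yields $\mathrm{H}_\varepsilon\langle F\rangle/r\mathrm{H}_\varepsilon\langle F\rangle \neq 0$, giving $\inf\langle F+eF\rangle = \varepsilon$ and the final isomorphism (since $\mathrm{H}_{\varepsilon-1}\langle F\rangle = 0$). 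For the top: if $r$ is $\mathrm{H}_\delta\langle F\rangle$-regular, then $0:_{\mathrm{H}_\delta\langle F\rangle}r = 0$ and $\mathrm{H}_i\langle F\rangle = 0$ for $i > \delta$ force $\mathrm{H}_i\langle F+eF\rangle = 0$ for $i > \delta$, while Nakayama gives $\mathrm{H}_\delta\langle F+eF\rangle \neq 0$; if $r$ is not $\mathrm{H}_\delta\langle F\rangle$-regular, then $0:_{\mathrm{H}_\delta\langle F\rangle}r \neq 0$ injects into $\mathrm{H}_{\delta+1}\langle F+eF\rangle$, pushing the supremum up by one.

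The main obstacle is the careful identification of the connecting map and the isomorphism $(F+eF)/F \cong \Sigma F$ at the level of MDG modules so that \eqref{diagram3} can be applied; everything else is a clean bookkeeping exercise combining the long exact sequence with Nakayama's lemma.
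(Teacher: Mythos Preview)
Your proposal is correct and follows essentially the same approach as the paper: the paper's proof invokes the associative-extension property from the previous theorem to obtain the long exact sequence in associator homology with connecting map multiplication by $r$, then extracts the short exact sequences and applies Nakayama's lemma for the infimum. Your version is in fact more detailed than the paper's (which is quite terse), as you explicitly identify $\langle(F+eF)/F\rangle\cong\Sigma\langle F\rangle$ via the description $\langle F+eF\rangle=\langle F\rangle+e\langle F\rangle$ from the previous theorem and compute the connecting map by hand, and you also spell out the supremum analysis that the paper leaves implicit.
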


\begin{proof}\label{proof} Since $F+eF$ is an associative extension
of $F$, we obtain a long exact sequence in homology: \begin{equation}\label{applicationlong}\begin{tikzcd}[row sep=40,ampersand replacement=\&]  \& \& \cdots \arrow[r] \arrow[d, phantom, ""{coordinate, name=Z'}] \& \mathrm{H}_{i} \langle F \rangle  \arrow[dll, "r" , swap, rounded corners, to path={ -- ([xshift=2ex]\tikztostart.east) |- (Z') [near end]\tikztonodes -| ([xshift=-2ex]\tikztotarget.west) -- (\tikztotarget)}] \\  \& \mathrm{H}_{i} \langle F \rangle \arrow[r] \& \mathrm{H}_{i} \langle F+eF \rangle \arrow[r] \arrow[d, phantom, ""{coordinate, name=Z}] \& \mathrm{H}_{i-1} \langle F \rangle \arrow[dll,  "r" ,swap, rounded corners, to path={ -- ([xshift=2ex]\tikztostart.east) |- (Z) [near end]\tikztonodes -| ([xshift=-2ex]\tikztotarget.west) -- (\tikztotarget)}] \\ \& \mathrm{H}_{i-1} \langle F \rangle \arrow[r] \& \cdots \end{tikzcd}\end{equation}
We obtain (\ref{applicationlong2}) as well as (\ref{applicationlong})
from this long exact sequence. We obtain $\mathrm{lha}(F+eF)=\varepsilon$
from the long exact sequence together with an application of Nakayama's
lemma. \end{proof}

\begin{cor}\label{cor} Suppose $\boldsymbol{r}=r_{1},\dots,r_{m}$
is a maximal $(R\slash I)$-regular sequence contained in $\mathfrak{m}$
and let $F+\boldsymbol{e}F$ be the corresponding $R$-free resolution
of $R\slash\langle I,\boldsymbol{r}\rangle$ obtained by iterating
the mapping cone construction. Then we obtain a short exact sequence
of $R\slash\langle I,\boldsymbol{r}\rangle$-modules \begin{equation}\label{applicationlong2}\begin{tikzcd} 0 \arrow[r] & \mathrm{H}_i \langle F \rangle \slash \boldsymbol{r} \mathrm{H}_i \langle F \rangle \arrow[r] & \mathrm{H}_i \langle F + \boldsymbol{e} F \rangle \arrow[r] & 0 : _{ \mathrm{H}_{i-1} \langle F \rangle } \boldsymbol{r} \arrow[r] & 0   \end{tikzcd}\end{equation}
In particular, have an isomorphism of $R\slash\langle I,\boldsymbol{r}\rangle$-modules:
\[
\mathrm{H}_{\varepsilon}\langle F\rangle\slash\boldsymbol{r}\mathrm{H}_{\varepsilon}\langle F\rangle\cong\mathrm{H}_{\varepsilon}\langle F+\boldsymbol{e}F\rangle.
\]
We also have the length formula:
\[
\ell(\mathrm{H}_{i}\langle F+\boldsymbol{e}F\rangle)=\ell(\mathrm{H}_{i}\langle F\rangle\slash\boldsymbol{r}\mathrm{H}_{i}\langle F\rangle)+\ell(0:_{\mathrm{H}_{i-1}\langle F\rangle}\boldsymbol{r}),
\]
here $\ell(-)$ is the length function. \end{cor}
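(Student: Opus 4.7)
The plan is to work with the short exact sequence of $R$-complexes
\[
0 \to \langle F\rangle \to \langle F + \boldsymbol{e}F\rangle \to \bigoplus_{j=1}^{m} \Sigma\langle F\rangle \to 0,
\]
whose existence follows from the description $\langle F + \boldsymbol{e}F\rangle = \langle F\rangle + \boldsymbol{e}\langle F\rangle$ supplied by Theorem~4.1 (only linear $e_{j}$-terms appear in the associator submodule). The differential on the quotient complex is $-\mathrm{d}$ componentwise on each shifted copy of $\langle F\rangle$, since in $\mathrm{d}(e_{j}b) = r_{j}b - e_{j}\mathrm{d}b$ the first term already lies inside $\langle F\rangle$ and is thus killed in the quotient.

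Taking homology yields a long exact sequence whose connecting map
\[
\partial_{i} : \bigoplus_{j=1}^{m} H_{i-1}\langle F\rangle \to H_{i-1}\langle F\rangle
\]
sends $(b_{1},\dots,b_{m}) \mapsto \sum_{j} r_{j}b_{j}$, with image $\boldsymbol{r}H_{i-1}\langle F\rangle$. Breaking the long exact sequence into short exact sequences then extracts the left-hand piece $H_{i}\langle F\rangle / \boldsymbol{r}H_{i}\langle F\rangle$ from the cokernel of the preceding connecting map landing in $H_{i}\langle F\rangle$, and the right-hand piece from $\ker\partial_{i}$, which one identifies with $0 :_{H_{i-1}\langle F\rangle} \boldsymbol{r}$. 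An equivalent route is to argue by induction on $m$, applying the preceding theorem to the partial cone $F^{(k)} := F + \sum_{i\le k} e_{i}F$ with the regular element $r_{k+1}$ and then using the inductive hypothesis to rewrite each intermediate short exact sequence in terms of $H_{\bullet}\langle F\rangle$ and $\boldsymbol{r}$.

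The isomorphism at $i = \varepsilon$ is then immediate: since $H_{\varepsilon-1}\langle F\rangle = 0$ by definition of $\varepsilon = \inf\langle F\rangle$, the right-hand annihilator term vanishes and the short exact sequence collapses to the stated isomorphism. The length formula is a formal consequence of additivity of length applied to the short exact sequence; the hypotheses guarantee that all three modules are finitely generated over $R$ and annihilated by $\langle I, \boldsymbol{r}\rangle$, so they have finite length over the Artinian quotient $R/\langle I, \boldsymbol{r}\rangle$.

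The main obstacle is the clean identification of $\ker\partial_{i}$ with $0 :_{H_{i-1}\langle F\rangle}\boldsymbol{r}$, since a priori the kernel of the $\boldsymbol{r}$-evaluation on an $m$-tuple is a first-syzygy module, and reconciling it with the single annihilator requires the regular-sequence hypothesis together with the compatibility between the multiplication and the iteration of mapping cones in Theorem~4.1. At the bottom degree $i = \varepsilon$ (and hence for both the displayed isomorphism and its length-formula specialization there) this step is trivial because both sides vanish, so the part of the corollary needed for the applications in the introduction is unobstructed.
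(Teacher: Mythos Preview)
Your direct short exact sequence is wrong. Iterating the single-variable formula $\langle F+eF\rangle=\langle F\rangle+e\langle F\rangle$ gives
\[
\langle F+\boldsymbol{e}F\rangle \;=\; \langle F\rangle\otimes_R E(\boldsymbol{r}),
\]
the tensor of $\langle F\rangle$ with the full Koszul complex on $\boldsymbol{r}$; in particular the quotient $\langle F+\boldsymbol{e}F\rangle/\langle F\rangle$ contains all the mixed terms $e_{j_1}\cdots e_{j_k}\langle F\rangle$, not just the $m$ linear ones. (The paper's shorthand $\langle F\rangle+\sum_i e_i\langle F\rangle$ is notational, mirroring its shorthand $F+\sum_i e_iF$ for the iterated cone; you took it literally.) With the correct quotient the connecting map is no longer the simple evaluation $(b_1,\dots,b_m)\mapsto\sum r_jb_j$, so your computation of its image and kernel does not apply.

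You do point to the right route: induction on $m$, applying the preceding theorem to $F^{(k)}$ and $r_{k+1}$. That is exactly how the paper intends the corollary to follow. But you stop short of carrying the induction through, and the step is not formal: from
\[
0\to \mathrm{H}_i\langle F^{(k)}\rangle/r_{k+1}\mathrm{H}_i\langle F^{(k)}\rangle \to \mathrm{H}_i\langle F^{(k+1)}\rangle \to 0:_{\mathrm{H}_{i-1}\langle F^{(k)}\rangle}r_{k+1}\to 0
\]
together with the inductive description of $\mathrm{H}_\bullet\langle F^{(k)}\rangle$, one has to actually show that the two outside terms collapse to $\mathrm{H}_i\langle F\rangle/\boldsymbol{r}\mathrm{H}_i\langle F\rangle$ and $0:_{\mathrm{H}_{i-1}\langle F\rangle}\boldsymbol{r}$ respectively. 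You correctly flag in your last paragraph that this is where the regular-sequence hypothesis must enter, but you do not supply the argument.

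Your treatment of the bottom degree $i=\varepsilon$ is fine: since the preceding theorem also gives $\inf\langle F^{(k)}\rangle=\varepsilon$ for every $k$, the right-hand terms vanish at each stage and the induction reduces to iterated quotients, yielding $\mathrm{H}_\varepsilon\langle F+\boldsymbol{e}F\rangle\cong \mathrm{H}_\varepsilon\langle F\rangle/\boldsymbol{r}\mathrm{H}_\varepsilon\langle F\rangle$ cleanly. The length formula is, as you say, additivity once the short exact sequence is in hand.
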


\section{The Symmetric DG Algebra}

Let $R$ be a commutative ring, let $A$ be a $\mathbb{Z}$-graded
$R$-module such that $A_{0}=R$ which is also equipped with a $\mathbb{Z}$-linear
differential $\mathrm{d}\colon A\to A$ giving it the structure of
a chain complex. Note that the differential need not be $R$-linear
and note that $A$ may be nonzero in negative homological degree.
In this section, we will construct the symmetric DG algebra of $A$,
which we denote by $\mathrm{S}(A)$. After constructing the symmetric
DG algebra in this general setting, we then specialize to the case
we are mostly interesting in, namely that $A$ is an $R$-complex
centered at $R$ meaning the differential of $A$ is $R$-linear with
$A_{0}=R$ and $A_{<0}=0$. In this case, we sometimes denote the
symmetric DG algebra of $A$ by $\mathrm{S}_{R}(A)$ with $R$ in
the subscript in order to emphasize that $A$ is centered at $R$.

~~~~~~Before we give a rigorous construction of the symmetric
DG algebra, we wish to help motivate the reader by giving an informal
description of it in this special case where $A$ is an $R$-complex
centered at $R$. In this case, the underlying graded algebra of $S=\mathrm{S}_{R}(A)$
is the usual symmetric $R$-algebra $\mathrm{Sym}(A_{+})$ where we
view $A_{+}$ as just an $R$-module. However $S$ obtains a bi-graded
structure using homological degree and total degree: we have a decomposition
of $S$ into $R$-modules:
\[
S=\bigoplus_{i\geq0}S_{i}=\bigoplus_{m\geq0}S^{m}=\bigoplus_{i,m\geq0}S_{i}^{m}.
\]
We refer to the $i$ in the subscript as homological degree\textbf{
}and we refer to the $m$ in the superscript as total degree. We have
$S_{0}=S^{0}=S_{0}^{0}=R$ and $S^{1}=A_{+}$. More generally, for
$i,m\geq1$, the $R$-module $S_{i}^{m}$ is the $R$-span of all
homogeneous elementary products of the form $\boldsymbol{a}=a_{1}\cdots a_{m}$
where $a_{1},\dots,a_{m}\in A_{+}$ are homogeneous (with respect
to homological degree of course) such that
\[
|\boldsymbol{a}|=|a_{1}|+\cdots+|a_{m}|=i.
\]
In particular, note that $A=S^{\leq1}=R+A_{+}$, thus we view $A$
as being the total degree $\leq1$ part of $S$. The differential
of $A$ extends the differential of $S$ in a natural way and is defined
on homogeneous elementary products $\boldsymbol{a}=a_{1}\cdots a_{m}$
by
\begin{align}
\mathrm{d}\boldsymbol{a} & =\sum_{j=1}^{m}(-1)^{|a_{1}|+\cdots+|a_{j-1}|}a_{1}\cdots\mathrm{d}(a_{j})\cdots a_{m}.\label{eq:caretaken}
\end{align}
If each of the $a_{j}$ in (\ref{eq:caretaken}) live in homological
degree $\ge2$, then $\mathrm{d}\boldsymbol{a}$ and $\boldsymbol{a}$
has the same total degree, namely $\deg(\mathrm{d}\boldsymbol{a})=m=\deg\boldsymbol{a}$.
However if one of the $a_{j}$ in (\ref{eq:caretaken}) lives in homological
degree $1$, then $\deg(\mathrm{d}\boldsymbol{a})=m-1$. The diagram
below illustrates how the differential acts on the bi-graded components:
\begin{center}\begin{tikzcd}[column sep = 7]

A_i \arrow[dr] && \cdots \arrow[dl] \arrow[dr] && S_i ^{m-1} \arrow[dl] \arrow[dr] && S_i ^{m}  \arrow[dl] \arrow[dr] &&  S_i ^{m+1} \arrow[dl] \arrow[dr] && \cdots \arrow[dl] \arrow[dr] && \mathbb{K} _i \arrow[dl]

\\

& A_{i-1} && S_{i-1} ^{m-2}   && S_{i-1} ^{m-1}  && S_{i-1} ^{m} && S_{i-1} ^{m+1} && \mathbb{K} _{i-1}

\end{tikzcd}\end{center}where we set $\mathbb{K}$ to be the koszul DG algebra induced by
$\mathrm{d}\colon A_{1}\to A_{0}$. Thus the differential of $S$
connects the usual differential of $A$ on the far left to a koszul
differential on the far right. In order to keep track of how the differential
operates on the bi-graded components, we express $\mathrm{d}$ as
\[
\mathrm{d}=\eth+\partial,
\]
where $\eth$ is the component of $\mathrm{d}$ which respects total
degree and where $\partial$ is the component of $\mathrm{d}$ which
drops total degree by $1$. In the next example, we consider a free
resolution of a cyclic module and work out what the symmetric DG algebra
looks like in this case.

\begin{example}\label{symmetricdgexample1} Let $R=\Bbbk[x,y]$, let
$\boldsymbol{m}=x^{2},xy$, and let $F$ be Taylor resolution of $R\slash\boldsymbol{m}$
over $R$. We write down the homogeneous components of $F$ as a graded
$R$-module as well as how the differential acts on the homogeneous
basis below:
\begin{align*}
F_{0} & =R & \mathrm{d}e_{1} & =x^{2}\\
F_{1} & =Re_{1}+Re_{2} & \mathrm{d}e_{2} & =xy\\
F_{2} & =Re_{12}, & \mathrm{d}e_{12} & =xe_{2}-ye_{1},
\end{align*}
Note that the Taylor resolution usually comes equipped with a multiplication
called the Taylor multiplication. Let us denote this by $\star$ so
as not to confuse it with the multiplication $\cdot$ of the symmetric
DG algebra $S=\mathrm{S}_{R}(F)$ of $F$. Now we write down the homogeneous
components of $S$ as a graded $R$-module (with respect to homological
degree) below:
\begin{align*}
S_{0} & =R\\
S_{1} & =Re_{1}+Re_{2}\\
S_{2} & =Re_{12}+Re_{1}e_{2}\\
S_{3} & =Re_{1}e_{12}+Re_{2}e_{12}\\
S_{4} & =Re_{12}^{2}+Re_{1}e_{2}e_{12}\\
 & \vdots\\
S_{2k-1} & =Re_{1}e_{12}^{k-1}+Re_{2}^{k-1}\\
S_{2k} & =Re_{12}^{k}+Re_{1}e_{2}e_{12}^{k-1}\\
S_{2k+1} & =Re_{1}e_{12}^{k}+Re_{2}e_{12}^{k}\\
 & \vdots
\end{align*}
Note that
\begin{align*}
\mathrm{d}(e_{1}e_{2}-xe_{12}) & =\mathrm{d}(e_{1}e_{2})-x\mathrm{d}(e_{12})\\
 & =\mathrm{d}(e_{1})e_{2}-e_{1}\mathrm{d}(e_{2})-x(xe_{2}-ye_{1})\\
 & =x^{2}e_{2}-xye_{1}-x^{2}e_{2}+xye_{1}\\
 & =0.
\end{align*}

\end{example}

\subsection{Construction of the Symmetric DG Algebra of $A$}

We now provide a rigorous construction of $\mathrm{S}(A)$ in the
general case where the differential of $A$ need not be $R$-linear
and where $A_{<0}$ is not necessarily zero. Our construction will
occur in three steps:

\hfill

\textbf{Step 1: }We define the \textbf{non-unital} \textbf{tensor
DG algebra} of $A$ to be
\[
\mathrm{U}_{\mathbb{Z}}(A):=\bigoplus_{n=1}^{\infty}A^{\otimes n},
\]
where the tensor product is taken as $\mathbb{Z}$-complexes. An elementary
tensor in $U=\mathrm{U}_{\mathbb{Z}}(A)$ is denoted $\boldsymbol{a}=a_{1}\otimes\cdots\otimes a_{n}$
where $a_{1},\dots,a_{n}\in A$ and $n\ge1$. The differential of
$U$ is denoted by $\mathrm{d}$ again to simplify notation and is
defined on $\boldsymbol{a}$ by
\[
\mathrm{d}\boldsymbol{a}=\sum_{j=1}^{n}(-1)^{|a_{1}|+\cdots+|a_{j-1}|}a_{1}\otimes\cdots\otimes\mathrm{d}a_{j}\otimes\cdots\otimes a_{n}.
\]

We say\textbf{ $\boldsymbol{a}$} is a homogeneous elementary tensors
if each $a_{i}$ is a homogeneous element in $A$. In this case, we
set
\[
|\boldsymbol{a}|=\sum_{i=1}^{n}|a_{i}|\quad\text{and}\quad\deg\boldsymbol{a}=\sum_{i=1}^{n}\deg a_{i},
\]
where $\deg$ is defined on elements $a\in A$ by
\[
\deg a=\begin{cases}
1 & \text{if }a\in A_{>0}\\
0 & \text{if }a\in R\\
-1 & \text{if }a\in A_{<0}
\end{cases}
\]
We call $|\boldsymbol{a}|$ the \textbf{homological degree }of $\boldsymbol{a}$
and we call $\deg\boldsymbol{a}$ the \textbf{total degree }of $\boldsymbol{a}$.
With $|\cdot|$ and $\deg$ defined, we observe that $U$ admits a
bi-graded decomposition:
\[
U=\bigoplus_{i\in\mathbb{Z}}U_{i}=\bigoplus_{m\in\mathbb{Z}}U^{m}=\bigoplus_{i,m\in\mathbb{Z}}U_{i}^{m},
\]
where the component $U_{i}^{m}$ consists of all finite $\mathbb{Z}$-linear
combinations of homogeneous elementary tensors $\boldsymbol{a}\in U$
such that $|\boldsymbol{a}|=i$ and $\deg\boldsymbol{a}=m$. We equip
$U$ with an associative (but not commutative nor unital) bi-graded
$\mathbb{Z}$-bilinear multiplication which is defined on homogeneous
elementary tensors by $(\boldsymbol{a},\boldsymbol{a}')\mapsto\boldsymbol{a}\otimes\boldsymbol{a}'$
and is extended $\mathbb{Z}$-bilinearly everywhere else. This multiplication
is easily seen to satisfy Leibniz rule, however note that $U$ is
not unital under this multiplication since $(1,1)\mapsto1\otimes1\neq1$
(hence why we call this the \emph{non-unital }tensor DG algebra).
Also note that $U$ already comes equipped with an $R$--scalar multiplication
(from the $R$-module structure on $A$), denoted $(r,\boldsymbol{a})\mapsto r\boldsymbol{a}$,
however the multiplication of $U$ only agrees with the $R$-scalar
multiplication wherever they are both defined and vanish. To rectify
this, let $\mathfrak{u}=\mathfrak{u}(A)$ be the $U$-ideal by all
elements of the form
\begin{align*}
[r,a]_{\mu} & =r\otimes a-ra & [a,r]_{\mu} & =a\otimes r-ar\\{}
[r,a]_{\mathrm{d}} & =\mathrm{d}r\otimes a-\mathrm{d}(ra)+r(\mathrm{d}a) & [a,r]_{\mathrm{d}} & =(-1)^{|a|}a\otimes\mathrm{d}r-\mathrm{d}(ar)+(\mathrm{d}a)r
\end{align*}
where $r\in R$ and $a\in A$. 

\begin{lemma}\label{lemma} The differential maps $\mathfrak{u}$
to itself. \end{lemma}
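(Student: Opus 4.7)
The plan is to reduce the statement to verifying that $\mathrm{d}$ sends each of the four types of generators of $\mathfrak{u}$ back into $\mathfrak{u}$, and then to do that verification by a direct calculation using $\mathrm{d}^2=0$ and the Leibniz rule in $U$.

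First I would note that since $U$ is not unital, an arbitrary element of the two-sided $U$-ideal $\mathfrak{u}$ is a finite $\mathbb{Z}$-linear combination of expressions of the form $g$, $u\otimes g$, $g\otimes v$ or $u\otimes g\otimes v$, where $g$ ranges over the four generators $[r,a]_{\mu},[a,r]_{\mu},[r,a]_{\mathrm{d}},[a,r]_{\mathrm{d}}$ and $u,v\in U$. Applying the (graded) Leibniz rule to $u\otimes g\otimes v$ produces three terms: $\mathrm{d}u\otimes g\otimes v$, $\pm u\otimes\mathrm{d}g\otimes v$, and $\pm u\otimes g\otimes\mathrm{d}v$. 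The first and third are visibly of the same shape and so lie in $\mathfrak{u}$; the middle one lies in $\mathfrak{u}$ as soon as we know $\mathrm{d}g\in\mathfrak{u}$. Hence it suffices to check the four cases where $g$ is one of the generators.

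Next I would carry out the four computations, writing them so that the outputs are visibly expressed as sums of generators. Using $\mathrm{d}^2=0$ everywhere it appears, the expected identities are
\begin{align*}
\mathrm{d}[r,a]_{\mu}&=[r,a]_{\mathrm{d}}+[r,\mathrm{d}a]_{\mu},\\
\mathrm{d}[a,r]_{\mu}&=[a,r]_{\mathrm{d}}+[\mathrm{d}a,r]_{\mu},\\
\mathrm{d}[r,a]_{\mathrm{d}}&=-[r,\mathrm{d}a]_{\mathrm{d}},\\
\mathrm{d}[a,r]_{\mathrm{d}}&=-[\mathrm{d}a,r]_{\mathrm{d}}.
\end{align*}
For the first, expand $\mathrm{d}(r\otimes a-ra)=\mathrm{d}r\otimes a+r\otimes\mathrm{d}a-\mathrm{d}(ra)$ and recognize $\mathrm{d}r\otimes a-\mathrm{d}(ra)=[r,a]_{\mathrm{d}}-r\,\mathrm{d}a$ and $r\otimes\mathrm{d}a-r\,\mathrm{d}a=[r,\mathrm{d}a]_{\mu}$; the second case is entirely analogous with signs tracked by the Koszul rule. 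For the third, after $\mathrm{d}^2 r=\mathrm{d}^2 a=0$ only the cross term $-\mathrm{d}r\otimes\mathrm{d}a$ and $\mathrm{d}(r\,\mathrm{d}a)$ survive, and rewriting $\mathrm{d}(r\,\mathrm{d}a)$ via the identity $[r,\mathrm{d}a]_{\mathrm{d}}=\mathrm{d}r\otimes\mathrm{d}a-\mathrm{d}(r\,\mathrm{d}a)$ collapses the result to $-[r,\mathrm{d}a]_{\mathrm{d}}$; the fourth case is the same pattern, the sign bookkeeping coming from $(-1)^{|\mathrm{d}a|}=-(-1)^{|a|}$.

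The only places where care is required are (i) remembering that $\mathrm{d}$ on $U$ is not $R$-linear, so terms such as $\mathrm{d}(ra)$ and $\mathrm{d}(r\,\mathrm{d}a)$ cannot be simplified to $r\,\mathrm{d}a$ or $r\,\mathrm{d}^2 a$, which is precisely why the generators $[r,a]_{\mathrm{d}}$ and $[a,r]_{\mathrm{d}}$ were introduced; and (ii) the Koszul signs on tensor factors and in the definition of $[a,r]_{\mathrm{d}}$. Thus the main point is simply to organize the signs and to use each generator to absorb the obstruction to $\mathrm{d}$ being $R$-linear at the appropriate step. Once the four identities are in hand the lemma follows by the reduction described in the first paragraph.
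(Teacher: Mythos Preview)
Your proposal is correct and follows essentially the same approach as the paper: reduce to the generators and verify by direct computation that $\mathrm{d}$ sends each generator to a combination of generators. The paper only writes out the cases $\mathrm{d}[r,a]_{\mu}$ and $\mathrm{d}[r,a]_{\mathrm{d}}$ explicitly and leaves the reduction-to-generators step implicit, whereas you spell out all four identities and the Leibniz-rule argument on $u\otimes g\otimes v$; the content is the same.
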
 

\begin{proof}\label{proof} Indeed, given $r\in R$ and $a\in A$,
we have
\begin{align*}
\mathrm{d}[r,a]_{\mu} & =\mathrm{d}(r\otimes a)-\mathrm{d}(ra)\\
 & =\mathrm{d}r\otimes a+r\otimes\mathrm{d}a-\mathrm{d}r\otimes a+r(\mathrm{d}a)+[r,a]_{\mathrm{d}}\\
 & =r\otimes\mathrm{d}a+r(\mathrm{d}a)+[r,a]_{\mathrm{d}}\\
 & =[r,\mathrm{d}a]_{\mu}+[r,a]_{\mathrm{d}}\\
 & \in\mathfrak{u}.
\end{align*}
Similarly we have
\begin{align*}
\mathrm{d}[r,a]_{\mathrm{d}} & =\mathrm{d}(\mathrm{d}r\otimes a-\mathrm{d}(ra)+r(\mathrm{d}a))\\
 & =-\mathrm{d}r\otimes\mathrm{d}a+\mathrm{d}(r(\mathrm{d}a))\\
 & =-\mathrm{d}r\otimes\mathrm{d}a+\mathrm{d}(r\otimes\mathrm{d}a-[r,\mathrm{d}a]_{\mu})\\
 & =-\mathrm{d}r\otimes\mathrm{d}a+\mathrm{d}r\otimes\mathrm{d}a-\mathrm{d}[r,\mathrm{d}a]_{\mu}\\
 & =-\mathrm{d}[r,\mathrm{d}a]_{\mu}\\
 & =-[r,\mathrm{d}a]_{\mathrm{d}}\\
 & \in\mathfrak{u}.
\end{align*}
Similar calculations show $\mathrm{d}[a,r]_{\mu}\in\mathfrak{u}$
and $\mathrm{d}[a,r]_{\mathrm{d}}\in\mathfrak{u}$. \end{proof}

\hfill

\textbf{Step 2: }We define the \textbf{tensor DG algebra }of $A$
to be the quotient
\[
\mathrm{T}(A):=\mathrm{U}(A)\slash\mathfrak{u}(A).
\]

The multiplication of $U=\mathrm{U}(A)$ induces a multiplication
on $T=\mathrm{T}(A)$ which not only becomes unital but also agrees
with the $R$-scalar multiplication on $T$ where they are both defined.
Since $\mathfrak{u}=\mathfrak{u}(A)$ is generated by elements which
are homogeneous with respect to homological degree and since the differential
of $U$ maps $\mathfrak{u}$ to itself, it follows that the differential
of $U$ induces a differential on $T$, which we again denote by $\mathrm{d}$
again. This gives $T$ the structure of a non-commutative (but unital)
DG $\Bbbk$-algebra, where
\[
\Bbbk=\{r\in R\mid\mathrm{d}r\otimes a=0\text{ for all }a\in A\}.
\]
In other words, the differential of $T$ satisfies Leibniz rule and
is $\Bbbk$-linear. Note that the generator $[r,a]_{\mu}$ of $\mathfrak{u}$
is also homogeneous with respect to total degree, however the generators
$[r,a]_{\mathrm{d}}$ is homogeneous with respect to total degree
if and only if either $\mathrm{d}r\otimes a=0$, or $\mathrm{d}(ra)=r\mathrm{d}a$,
or $|a|\in\{0,1\}$. In particular, $\mathfrak{u}$ will be homogeneous
with respect to total degree if $A$ is an $R$-complex centered at
$R$ (which is a case we are interested in). In this case, $T$ inherits
from $U$ a bi-graded $R$-algebra structure:
\[
T=\bigoplus_{i\in\mathbb{Z}}T_{i}=\bigoplus_{m\in\mathbb{Z}}T^{m}=\bigoplus_{i,m\in\mathbb{Z}}T_{i}^{m}.
\]

\begin{example}\label{example} Let us describe what the total degree
$m$ component of $T=\mathrm{T}_{R}(A)$ in the case where $A$ is
an $R$-complex centered at $R$. We have
\begin{align*}
T^{0} & =R\\
T^{1} & =\bigoplus_{1\leq i}A_{i}\\
T^{2} & =\bigoplus_{\substack{1\leq i<j}
}((A_{i}\otimes A_{j})\oplus(A_{j}\otimes A_{i}))\oplus\bigoplus_{1\leq i}A_{i}^{\otimes2}
\end{align*}
The component $T^{3}$ is slightly more complicated:
\[
\bigoplus_{\substack{1\leq i<j<k\\
\pi\in S_{3}
}
}(A_{\pi(i)}\otimes A_{\pi(j)}\otimes A_{\pi(k)})\oplus\bigoplus_{\substack{1\leq i<j\\
\pi\in S_{2}
}
}((A_{\pi(i)}^{\otimes2}\otimes A_{\pi(j)})\oplus(A_{\pi(i)}\otimes A_{\pi(j)}\otimes A_{\pi(i)})\oplus(A_{\pi(i)}\otimes A_{\pi(j)}^{\otimes2}))\oplus\bigoplus_{1\leq i}A_{i}^{\otimes3}.
\]
More generally, there is an interpretation of $T^{m}$ in terms of
certain rooted trees. \end{example}

~~~~~~~Now let $\mathfrak{t}=\mathfrak{t}(A)$ be the $T$-ideal
generated by all elements of the form
\[
[a_{1},a_{2}]_{\sigma}\colon=(-1)^{|a_{1}||a_{2}|}a_{2}\otimes a_{1}-a_{1}\otimes a_{2}\quad\text{and}\quad[a]_{\tau}:=a\otimes a,
\]
where $a,a_{1},a_{2}\in A$ are homogeneous and $|a|$ is odd.

\begin{lemma}\label{lemma} The differential of $T$ maps $\mathfrak{t}$
to itself. \end{lemma}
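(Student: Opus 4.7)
The plan is the standard ``check on generators, then propagate by Leibniz'' argument. Since $\mathfrak{t}$ is a two-sided $T$-ideal and the differential $\mathrm{d}$ on $T$ satisfies the Leibniz rule, for any generator $g$ of $\mathfrak{t}$ and any $t,t' \in T$ we have
\[
\mathrm{d}(tgt') = (\mathrm{d}t)gt' + (-1)^{|t|}t(\mathrm{d}g)t' + (-1)^{|t|+|g|}tg(\mathrm{d}t').
\]
The first and third summands contain the factor $g \in \mathfrak{t}$, so they automatically lie in $\mathfrak{t}$. Therefore the whole claim reduces to showing $\mathrm{d}g \in \mathfrak{t}$ for each of the two types of generators $[a_1,a_2]_\sigma$ and $[a]_\tau$.

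For the antisymmetrizer $[a_1,a_2]_\sigma = (-1)^{|a_1||a_2|}a_2\otimes a_1 - a_1 \otimes a_2$, I would apply $\mathrm{d}$ and expand both tensor summands using the Leibniz rule on $U$, then collect terms. The guess, motivated by the usual formula for a graded-commutator, is
\[
\mathrm{d}[a_1,a_2]_\sigma \;=\; [\mathrm{d}a_1, a_2]_\sigma + (-1)^{|a_1|}[a_1,\mathrm{d}a_2]_\sigma.
\]
The verification is a matter of matching the four coefficients $\mathrm{d}a_1\otimes a_2$, $a_2\otimes\mathrm{d}a_1$, $a_1\otimes \mathrm{d}a_2$, $\mathrm{d}a_2\otimes a_1$ on each side, using $(-1)^{(|a_1|\pm 1)|a_2|} = (-1)^{|a_1||a_2|+|a_2|}$ and the analogous identity for $a_2$. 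Since both summands on the right are generators of $\mathfrak{t}$, this shows $\mathrm{d}[a_1,a_2]_\sigma \in \mathfrak{t}$.

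For the odd-square generator $[a]_\tau = a\otimes a$ with $|a|$ odd, the Leibniz rule gives $\mathrm{d}(a\otimes a) = \mathrm{d}a \otimes a + (-1)^{|a|} a\otimes \mathrm{d}a = \mathrm{d}a \otimes a - a\otimes \mathrm{d}a$. Now $|a|$ odd forces $|\mathrm{d}a|$ even, so $|\mathrm{d}a|\cdot|a|$ is even, and hence
\[
[\mathrm{d}a,a]_\sigma \;=\; (-1)^{|\mathrm{d}a||a|}a\otimes\mathrm{d}a - \mathrm{d}a\otimes a \;=\; a\otimes\mathrm{d}a - \mathrm{d}a\otimes a.
\]
Therefore $\mathrm{d}[a]_\tau = -[\mathrm{d}a,a]_\sigma \in \mathfrak{t}$.

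I do not expect any real obstacle here; the only mild subtlety is keeping the Koszul signs straight in the first computation and noticing in the second one that $|\mathrm{d}a||a|$ is even precisely because $|a|$ is odd, which is what makes $\mathrm{d}[a]_\tau$ collapse to a single $\sigma$-generator (rather than to another $\tau$-generator, which would be false since $[\mathrm{d}a]_\tau$ is not a generator when $|\mathrm{d}a|$ is even). Once both generator computations are in place, the ideal-closure argument from the opening paragraph finishes the proof.
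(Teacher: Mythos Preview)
Your argument is correct and is essentially the paper's own proof: both reduce to checking $\mathrm{d}[a_1,a_2]_\sigma=[\mathrm{d}a_1,a_2]_\sigma+(-1)^{|a_1|}[a_1,\mathrm{d}a_2]_\sigma$ and that $\mathrm{d}[a]_\tau$ is a $\sigma$-generator, with the Leibniz-propagation step you spell out left implicit in the paper. Your sign $\mathrm{d}[a]_\tau=-[\mathrm{d}a,a]_\sigma$ is in fact the correct one (the paper drops the minus), but this is immaterial for membership in $\mathfrak{t}$.
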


\begin{proof}\label{proof} Indeed, if $a,a_{1},a_{2}\in A$ are homogeneous
with $|a|$ odd, then we have
\begin{align*}
\mathrm{d}[a_{1},a_{2}]_{\sigma} & =[\mathrm{d}a_{1},a_{2}]_{\sigma}+(-1)^{|a_{1}|}[a_{1},\mathrm{d}a_{2}]_{\sigma}\in\mathfrak{t}\quad\text{and}\quad\mathrm{d}[a]_{\tau}=[\mathrm{d}a,a]_{\sigma}\in\mathfrak{t}.
\end{align*}

\end{proof}

\hfill

\textbf{Step 3: }We define the \textbf{symmetric DG algebra }of $A$
to be the quotient
\[
\mathrm{S}(A):=\mathrm{T}(A)\slash\mathfrak{t}(A)
\]
The image of a homogeneous elementary tensor $a_{1}\otimes\cdots\otimes a_{m}$
in $S=\mathrm{S}(A)$ is often denoted $a_{1}\cdots a_{n}$ and is
called a homogeneous elementary product. Since $\mathfrak{t}=\mathfrak{t}(A)$
is generated by elements which are homogeneous with respect to both
homological degree and since the differential of $T=\mathrm{T}(A)$
maps $\mathfrak{t}$ to itself, we see that the differential of $T$
induces a differential on $S$, which we again denote by $\mathrm{d}$,
giving it the structure of a strictly graded-commutative DG $\Bbbk$-algebra.
Furthemore, if $T$ inherits the bi-graded structure from $U$, then
$S$ inherits the bi-graded structure from $T$ since $\mathfrak{t}$
is generated by elements which are homogeneous with respect to total
degree.

\subsection{Properties of the Symmetric DG Algebra}

We now focus our attention to the case where $A$ is an $R$-complex
centered at $R$ and we wish to study $S=\mathrm{S}_{R}(A)$ the symmetric
DG $R$-algebra of $A$ (note that we sometimes write $R$ in the
subscript of $\mathrm{S}_{R}(A)$ to emphasize that $A$ and $S=\mathrm{S}_{R}(A)$
are centered at $R$). In this case, the underlying graded $R$-algebra
of $S$ is the usual symmetric algebra of $A_{+}$:
\[
\mathrm{Sym}_{R}(A_{+})=\frac{\bigoplus_{m\ge0}A_{+}^{\otimes m}}{\langle\{[a_{1},a_{2}]_{\sigma},[a]_{\tau}\}\rangle},
\]
where the tensor product is taken over $R$. Thus the symmetric DG
algebra of $A$ inherits all of the properties that are satisfied
by the symmetric algebra of $A_{+}$ when we forget about the differential.
For instance, recall that a bounded below $R$-complex is semiprojective
if and only if its underlying graded $R$-module is projective as
a graded $R$-module. In particular, if $A$ is semiprojective, then
$S$ is semiprojective too. Thus if we assume that $A$ is semiprojective
\emph{and }that there exists a surjective chain map $\pi\colon S\twoheadrightarrow A$
which splits the inclusion map $\iota\colon A\hookrightarrow S$,
then we can lift chains maps out of $A$ along surjective quasi-isomorphisms,
meaning if $\varphi\colon A\to X$ is any chain map and $\tau\colon Y\to X$
is any surjective quasi-isomorphism, then there exists a chain map
$\widetilde{\varphi}\colon S\to Y$ such that $\tau\widetilde{\varphi}=\varphi$,
moreover such a lift is unique up to homotopy. The assumption that
$A$ is semiprojective is mild whereas the assumption that there exists
a chain map $S\twoheadrightarrow A$ which splits the inclusion map
$A\hookrightarrow S$ is rather subtle. We shall see that such a surjective
chain map $\pi\colon S\twoheadrightarrow A$ will exist if $A$ has
a DG $R$-algebra structure on it, and in Proposition~(\ref{propsurjkls}),
we shall see that such a surjective chain map $\pi\colon S\twoheadrightarrow A$
exists in the case where $A$ is a projective resolution of a cyclic
$R$-module.

\begin{prop}\label{prop} Let $R$ be a commutative ring and let $A$
be an $R$-complex centered at $R$. 
\begin{enumerate}
\item (Base Change) Let $R'$ be an $R$-algebra. Then
\begin{equation}
\mathrm{S}_{R}(A)\otimes_{R}R'=\mathrm{S}_{R'}(A\otimes_{R}R').\label{eq:basecangelll}
\end{equation}
\item (Exact Sequences) Let \begin{equation}\label{equation}\begin{tikzcd} B \arrow[r] &  A \arrow[r] &  A' \arrow[r] & 0 \end{tikzcd}\end{equation}be
an exact sequence of $R$-complexes where $A'$ is centered at a cyclic
$R$-algebra, say $R'=R\slash I$ for some ideal $I$ of $R$. Then
we obtain an exact sequence\begin{equation}\label{equation}\begin{tikzcd} \mathrm{S}_R (A) \otimes _R B \arrow[r] &  \mathrm{S}_R (A) \arrow[r] &  \mathrm{S}_{R'} (A') \arrow[r] & 0 \end{tikzcd}\end{equation}.
\item (Universal Mapping Property) For every chain map of the form $\varphi\colon A\to A'$,
where $A'$ is a DG algebra centered at a ring $R'$ and where $\varphi$
restricts to a ring homomorphism $\varphi_{0}\colon R\to R'$, there
exists a unique DG algebra homomorphism $\widetilde{\varphi}\colon\mathrm{S}_{R}(A)\to A'$
which extends $\varphi\colon A\to A'$, that is, such that $\widetilde{\varphi}\circ\iota=\varphi$
where $\iota\colon A\hookrightarrow\mathrm{S}_{R}(A)$ is the inclusion
map. We express this in terms of a commutative diagram as below:\begin{equation}\label{equationump2}\begin{tikzcd}[row sep =40, column sep = 40]  A \arrow[dr,"\varphi ", swap] \arrow[r, " \iota ", hook  ] & \mathrm{S}_R (A) \arrow[d, "  \widetilde { \varphi }  " ] \\ & A' \end{tikzcd}\end{equation}
\end{enumerate}
\end{prop}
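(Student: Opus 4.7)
The three parts form a natural logical hierarchy: the universal mapping property (3) is the foundational ingredient, and parts (1) and (2) follow from it by standard categorical arguments. I would therefore prove (3) first and then deduce (1) and (2) from it.

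For (3), the strategy is to extend $\varphi$ through the three-step construction of $\mathrm{S}_R(A)$. First define $\hat{\varphi}\colon \mathrm{U}(A)\to A'$ on a homogeneous elementary tensor by $\hat{\varphi}(a_1\otimes\cdots\otimes a_n) = \varphi(a_1)\cdots\varphi(a_n)$, interpreted via the associative multiplication of $A'$; a direct Leibniz-rule check shows that $\hat{\varphi}$ commutes with the differential. Next, verify that $\hat{\varphi}$ annihilates the generators of $\mathfrak{u}$: the elements $[r,a]_\mu$ and $[a,r]_\mu$ die because $\varphi$ restricts to a ring homomorphism $\varphi_0\colon R\to R'$, and $[r,a]_{\mathrm{d}}, [a,r]_{\mathrm{d}}$ die because $\varphi$ is a chain map into a DG algebra whose differential is $R'$-linear. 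Thus $\hat{\varphi}$ descends to a DG algebra map $\mathrm{T}(A)\to A'$. Finally, this descended map kills the generators of $\mathfrak{t}$ because $A'$ is strictly graded-commutative, yielding the desired $\tilde{\varphi}\colon\mathrm{S}_R(A)\to A'$. Uniqueness is immediate since $\mathrm{S}_R(A)$ is generated as an $R$-algebra by $A$, so any DG algebra extension of $\varphi$ is determined by its values on $A$.

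For (1), apply (3) to the chain map $A\to A\otimes_R R'\hookrightarrow \mathrm{S}_{R'}(A\otimes_R R')$ (which restricts to the canonical ring homomorphism on $R$) to obtain a DG algebra map $\mathrm{S}_R(A)\to \mathrm{S}_{R'}(A\otimes_R R')$; since the target is an $R'$-algebra, this descends to $\mathrm{S}_R(A)\otimes_R R'$. A symmetric application of the $R'$-version of (3) to the chain map $A\otimes_R R'\to \mathrm{S}_R(A)\otimes_R R'$ yields an inverse, and uniqueness in (3) forces the two compositions to be identities. For (2), the composition $A\to A'\hookrightarrow \mathrm{S}_{R'}(A')$ extends by (3) to a DG algebra map $\tilde{\varphi}\colon \mathrm{S}_R(A)\to \mathrm{S}_{R'}(A')$, which is surjective since $A'$ generates $\mathrm{S}_{R'}(A')$ as an $R'$-algebra and $R'$ lies in the image of $R$. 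Its kernel contains the ideal $\mathrm{S}_R(A)\cdot \mathrm{im}(B)$ since $\mathrm{im}(B) = \ker(A\to A')$ dies in $A'$, giving an induced surjection from $\mathrm{S}_R(A)/\mathrm{S}_R(A)\cdot \mathrm{im}(B)$ onto $\mathrm{S}_{R'}(A')$. Injectivity then follows by checking that this quotient itself satisfies the universal property of $\mathrm{S}_{R'}(A')$: any chain map from $A'$ to a DG algebra pulls back to a chain map from $A$ which kills $\mathrm{im}(B)$, and hence by (3) extends uniquely through the quotient.

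The main obstacle I expect is the sign and differential bookkeeping required to show that $\hat{\varphi}$ descends cleanly through $\mathfrak{u}$, particularly for the generators $[r,a]_{\mathrm{d}}$ and $[a,r]_{\mathrm{d}}$, which encode the failure of the differential of $A$ to be $R$-linear. Once (3) is in hand, parts (1) and (2) are essentially formal, though in (2) one must carefully verify that the ideal $\mathrm{S}_R(A)\cdot \mathrm{im}(B)$ is indeed the full kernel rather than just contained in it, which is exactly what the universal property argument is designed to establish.
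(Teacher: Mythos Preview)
Your proposal is correct, and your overall architecture---prove (3) first, then deduce (1) and (2) categorically---is sound. The paper takes a slightly different route: it declares (1) and (2) ``straightforward to show'' without argument and proves only (3), and its proof of (3) is more direct than yours. Rather than passing through $\mathrm{U}(A)$ and $\mathrm{T}(A)$ and checking that the relevant ideals die, the paper simply defines $\widetilde{\varphi}$ on $S=\mathrm{S}_R(A)$ itself by $\widetilde{\varphi}(a_1\cdots a_m)=\varphi(a_1)\cdots\varphi(a_m)$ on homogeneous elementary products, then verifies by a single Leibniz computation that this commutes with the differential and is multiplicative by construction; uniqueness is immediate since $A$ generates $S$. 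Your route through the intermediate quotients is equivalent but carries more bookkeeping.

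One remark: the obstacle you flag---the generators $[r,a]_{\mathrm{d}}$ and $[a,r]_{\mathrm{d}}$---is not actually present here. The proposition is stated for $A$ an $R$-complex \emph{centered at} $R$, which in the paper's terminology means the differential is $R$-linear and $A_{<0}=0$; hence $\mathrm{d}r=0$ for $r\in R$ and $[r,a]_{\mathrm{d}}=0$ identically. So that step is trivial in this setting, and the paper's direct approach sidesteps it entirely. Your deduction of (1) and (2) from (3) via mutually inverse universal maps is a clean way to fill in what the paper omits; the paper presumably has in mind the same argument, or the equivalent observation that the underlying graded algebra of $\mathrm{S}_R(A)$ is the ordinary symmetric algebra $\mathrm{Sym}_R(A_+)$, for which base change and right exactness are classical.
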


\begin{rem}\label{rem} Strictly speaking, one should write $R\otimes_{R}R'$
in the subscript on the right hand side of Equation~(\ref{eq:basecangelll}).
However we may view $R'$ as being the homological degree $0$ part
by identifying $R'$ with $R\otimes_{R}R'$ via the canonical isomorphism
$R'\simeq R\otimes_{R}R'$. \end{rem}

\begin{proof}\label{proof} We only prove the third property since
the first two properties are straightforward to show. Let $\varphi\colon A\to A'$
be such a chain map and denote $S=\mathrm{S}_{R}(A)$. We define $\widetilde{\varphi}\colon S\to A'$
by setting $\widetilde{\varphi}|_{A}=\varphi$ and
\begin{equation}
\widetilde{\varphi}(a_{1}\cdots a_{m})=\varphi(a_{1})\cdots\varphi(a_{m})\label{eq:satisfythis-1}
\end{equation}
for all homogeneous elementary products $a_{1}\cdots a_{m}$ in $S^{\geq2}$
and then extending it $R$-linearly everywhere else. By construction,
$\widetilde{\varphi}$ is multiplicative and extends $\varphi\colon A\to A'$.
Furthermore, $\widetilde{\varphi}$ is a chain map since it is a graded
$R$-linear map which commutes with the differential. Indeed, we clearly
have $\widetilde{\varphi}\mathrm{d}(1)=0=\mathrm{d}\widetilde{\varphi}(1)$,
and for all homogeneous elementary products $a_{1}\cdots a_{m}$ in
$S^{\geq2}$, we have
\begin{align*}
\widetilde{\varphi}\mathrm{d}(a_{1}\cdots a_{m}) & =\sum_{j=1}^{m}(-1)^{|a_{1}|+\cdots+|a_{j-1}|}\widetilde{\varphi}(a_{1}\cdots\mathrm{d}(a_{j})\cdots a_{m})\\
 & =\sum_{j=1}^{m}(-1)^{|a_{1}|+\cdots+|a_{j-1}|}\varphi(a_{1})\cdots\varphi\mathrm{d}(a_{j})\cdots\varphi(a_{m})\\
 & =\sum_{j=1}^{m}(-1)^{|a_{1}|+\cdots+|a_{j-1}|}\varphi(a_{1})\cdots\mathrm{d}\varphi(a_{j})\cdots\varphi(a_{m})\\
 & =\mathrm{d}(\varphi(a_{1})\cdots\varphi(a_{m}))\\
 & =\mathrm{d}\widetilde{\varphi}(a_{1}\cdots a_{m}).
\end{align*}
Finally, if $\widehat{\varphi}\colon S\to A'$ were another DG algebra
homomorphism which extended $\varphi\colon A\to B$, then we would
have
\[
\widetilde{\varphi}(a_{1}\cdots a_{m})=\widehat{\varphi}(a_{1})\cdots\widehat{\varphi}(a_{m})=\varphi(a_{1})\cdots\varphi(a_{m})=\widetilde{\varphi}(a_{1}\cdots a_{m})
\]
for all homogeneous elementary products $a_{1}\cdots a_{m}$ in $S^{\geq2}$,
which implies $\widehat{\varphi}=\widetilde{\varphi}$. \end{proof}

\begin{defn}\label{defn} Let $A$ and $B$ be two $R$-complexes
centered at $R$. We define their \textbf{wedge sum }$A\lor B$ to
be the $R$-complex centered at $R$ whose underlying graded $R$-module
is given by
\[
(A\lor B)_{i}=\begin{cases}
A_{i}\oplus B_{i} & \text{if }i\geq1\\
R & \text{if }i=0
\end{cases}
\]
and whose differential is defined by
\[
\mathrm{d}(a,b)=\begin{cases}
(\mathrm{d}a,\mathrm{d}b) & \text{if }|a|=|b|\geq2\\
\mathrm{d}a-\mathrm{d}b & \text{if }|a|=|b|=1
\end{cases}
\]
Observe that
\[
\mathrm{H}_{i}(A\lor B)=\begin{cases}
R\slash(\mathrm{d}A_{1}+\mathrm{d}B_{1}) & \text{if }i=0\\
(A_{1}\times_{R}B_{1})\slash(\mathrm{d}A_{2}\oplus\mathrm{d}B_{2}) & \text{if }i=1\\
\mathrm{H}_{i}(A)\oplus\mathrm{H}_{i}(B) & \text{if }i\geq2
\end{cases}
\]
\end{defn}

\begin{prop}\label{prop} Let $A$ and $B$ be two $R$-complexes
centered at $R$. Then we have
\[
\mathrm{S}_{R}(A\lor B)=\mathrm{S}_{R}(A)\otimes_{R}\mathrm{S}_{R}(B).
\]
\end{prop}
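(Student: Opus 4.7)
The strategy is to invoke the universal mapping property of the symmetric DG algebra (part (3) of the preceding proposition) to construct DG algebra homomorphisms in both directions, and then to verify they are mutually inverse by checking their effect on generators. Since the underlying graded $R$-algebra of $\mathrm{S}_R(A\vee B)$ is $\mathrm{Sym}_R(A_+\oplus B_+)$ which already agrees with $\mathrm{Sym}_R(A_+)\otimes_R\mathrm{Sym}_R(B_+)$ as graded $R$-algebras (via the usual symmetric-algebra identity), the real content is verifying that the differentials match.

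First, I define a chain map $\varphi\colon A\vee B\to\mathrm{S}_R(A)\otimes_R\mathrm{S}_R(B)$ by $\varphi|_R=\mathrm{id}_R$, $\varphi(a,0)=a\otimes 1$ for $a\in A_+$, and $\varphi(0,b)=-1\otimes b$ for $b\in B_+$. The asymmetric sign is forced by the asymmetric definition of the wedge-sum differential. To verify that $\varphi$ is a chain map, I check the two cases separately: for $|a|=|b|\ge 2$, both sides give $\mathrm{d}a\otimes 1-1\otimes\mathrm{d}b$; for $|a|=|b|=1$, $\varphi(\mathrm{d}(a,b))=\varphi(\mathrm{d}a-\mathrm{d}b)=\mathrm{d}a-\mathrm{d}b$ while $\mathrm{d}\varphi(a,b)=\mathrm{d}a\otimes 1-1\otimes\mathrm{d}b$, which coincide under the identification $R\otimes_R R=R$. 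By the universal mapping property, $\varphi$ extends uniquely to a DG $R$-algebra homomorphism $\widetilde{\varphi}\colon \mathrm{S}_R(A\vee B)\to\mathrm{S}_R(A)\otimes_R\mathrm{S}_R(B)$.

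For the inverse, I define chain maps $\alpha\colon A\to\mathrm{S}_R(A\vee B)$ by $a\mapsto(a,0)$ and $\beta\colon B\to\mathrm{S}_R(A\vee B)$ by $b\mapsto-(0,b)$, each restricting to the identity on $R$. The map $\alpha$ is a chain map directly from the wedge-sum formula; for $\beta$, the minus sign is precisely what converts the $-\mathrm{d}b$ coming from $\mathrm{d}(0,b)$ in degree one back into $\mathrm{d}b$ to match $\beta(\mathrm{d}b)$. By the universal property these extend to DG algebra homomorphisms $\widetilde{\alpha}$ and $\widetilde{\beta}$, and because $\mathrm{S}_R(A\vee B)$ is graded-commutative, the assignment $\psi(x\otimes y):=\widetilde{\alpha}(x)\widetilde{\beta}(y)$ is a well-defined DG algebra homomorphism $\psi\colon \mathrm{S}_R(A)\otimes_R\mathrm{S}_R(B)\to\mathrm{S}_R(A\vee B)$.

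Finally I verify $\widetilde{\varphi}$ and $\psi$ are mutually inverse. Since both are DG algebra homomorphisms, the uniqueness clause of the universal mapping property reduces this to checking equality on algebra generators. For $\psi\circ\widetilde{\varphi}$ evaluated on $(a,0)$ one gets $\psi(a\otimes 1)=(a,0)$, and on $(0,b)$ one gets $\psi(-1\otimes b)=(0,b)$; for $\widetilde{\varphi}\circ\psi$ evaluated on $a\otimes 1$ one gets $\widetilde{\varphi}((a,0))=a\otimes 1$, and on $1\otimes b$ one gets $\widetilde{\varphi}(-(0,b))=1\otimes b$. The main (and really only) obstacle is bookkeeping the sign convention in degree one of the wedge sum; once that is absorbed into the definitions of $\varphi$ and $\beta$, everything else is a formal application of the universal mapping property.
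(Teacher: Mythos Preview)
Your proof is correct and follows essentially the same approach as the paper: both invoke the standard identity $\mathrm{Sym}_R(A_+\oplus B_+)\cong\mathrm{Sym}_R(A_+)\otimes_R\mathrm{Sym}_R(B_+)$ for the underlying graded algebras and then appeal to the universal mapping property to upgrade this to an isomorphism of DG algebras. The paper's argument is considerably terser---it simply asserts that the differentials match and that the tensor product satisfies the UMP of $\mathrm{S}_R(A\vee B)$---whereas you construct explicit mutually inverse DG algebra homomorphisms and, in particular, track the sign forced by the asymmetric convention $\mathrm{d}(a,b)=\mathrm{d}a-\mathrm{d}b$ in degree one. This extra care is a genuine improvement: without the sign in your definition of $\varphi(0,b)$ and $\beta(b)$, the naive symmetric-algebra isomorphism would not intertwine the differentials, a point the paper's ``easy to check'' glosses over.
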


\begin{proof}\label{proof} In terms of the underlying graded $R$-algebras,
we have
\begin{align*}
\mathrm{S}_{R}(A\lor B) & =\mathrm{Sym}_{R}(A_{+}\oplus B_{+})\\
 & =\mathrm{Sym}_{R}(A_{+})\otimes_{R}\mathrm{Sym}_{R}(B)\\
 & =\mathrm{S}_{R}(A)\otimes_{R}\mathrm{S}_{R}(B).
\end{align*}
It is easy to check that the differential of $\mathrm{S}_{R}(A\lor B)$
is carried over to the differential of $\mathrm{S}_{R}(A)\otimes_{R}\mathrm{S}_{R}(B)$
under this isomorphism (we write equality here because $\mathrm{S}_{R}(A)\otimes_{R}\mathrm{S}_{R}(B)$
satisfies the universal mapping property of the symmetric DG $R$-algebra
of $A\lor B$. \end{proof}

\subsection{Presentation of the Maximal Associative Quotient}

Let $A$ be an $R$-complex centered at $R$ and let $S=\mathrm{S}_{R}(A)$
be the symmetric DG $R$-algebra of $A$. Equip $A$ with a multiplication
$\mu=(\mu,\star)$ giving it the structure of an MDG $R$-algebra.
In particular, note that if $a_{1},a_{2}\in A_{1}$, then
\[
a_{1}a_{2}\in S_{2}^{2},\,\,\,a_{1}\star a_{2}\in S_{2}^{1},\,\,\,\text{and}\,\,\,[a_{1},a_{2}]\in S_{2},
\]
where $[a_{1},a_{2}]=a_{1}\star a_{2}-a_{1}a_{2}$ is the multiplicator
of the inclusion map $\iota\colon A\hookrightarrow S$ evaluated at
$(a_{1},a_{2})\in A^{2}$. Let $\mathfrak{s}=\mathfrak{s}(\mu)$ be
the $S$-ideal generated by all such multiplicators, so
\[
\mathfrak{s}=\mathrm{span}_{S}\{[a_{1},a_{2}]\mid a_{1},a_{2}\in A\}.
\]
Also let $\pi\colon S\to S\slash\mathfrak{s}$ and $\pi^{\mathrm{as}}\colon A\twoheadrightarrow A^{\mathrm{as}}$
denote the canonical quotient maps. The universal mapping property
of the symmetric DG algebra of $A$ implies $\pi^{\mathrm{as}}\colon A\twoheadrightarrow A^{\mathrm{as}}$
extends uniquely to a DG algebra homomorphism $S\twoheadrightarrow A^{\mathrm{as}}$
which we again denote by $\pi^{\mathrm{as}}$. We let $S^{\geq2}=S\slash A$
be the $R$-complex whose underlying graded $R$-module is $S^{\geq2}$
and whose differential $\mathrm{d}^{\geq2}$ is defined by
\[
\mathrm{d}^{\geq2}|_{S^{m}}=\begin{cases}
\eth|_{S^{2}} & \text{if }m=2\\
\mathrm{d}|_{S^{m}} & \text{if }m>2.
\end{cases}
\]
We also let $\rho\colon S\twoheadrightarrow S\slash A=S^{\geq2}$
be the canonical quotient map. We now present the fourth main theorem
from the introduction.

\begin{theorem}\label{theorempresentationsym} With the notation as
above, we have
\[
A^{\mathrm{as}}=\mathrm{coker}(\mathfrak{s}\hookrightarrow S)=S\slash\mathfrak{s}
\]
More specifically, there is a unique isomorphism $A^{\mathrm{as}}\to S\slash\mathfrak{s}$
of $\mathrm{DG}$ $S$-algebras (thus we are justified in writing
$\pi\colon S\to A^{\mathrm{as}}$ to denote both $\pi^{\mathrm{as}}\colon S\to A^{\mathrm{as}}$
and $\pi\colon S\to S\slash\mathfrak{s}$ in order to simplify notation).
In particular, this implies
\[
\langle A\rangle=A\cap\mathfrak{s}=\mathfrak{s}^{\leq1}=\ker(\mathfrak{s}\to S^{\geq2})
\]
Thus we have the following canonically defined hexagonal-shaped diagram
of $R$-complexes which is exact everywhere in every direction:\begin{equation}\label{canondiagram}\begin{tikzcd}[row sep=30 ]& S ^{\geq 2} \arrow[r] & 0 \\ \mathfrak{s} \arrow[ur, two heads, color = green ]  \arrow[r, hook, "i ", color = blue ] & S \arrow[u, two heads, "\rho ", swap , color = green ] \arrow[r, two heads, "\pi ", color = blue ] & A^{\mathrm{as}}  \arrow[u] \\ \mathfrak{s} ^{\leq 1} \arrow[r, hook, color = red ] \arrow[u, hook' , color = green ] & A \arrow[u, hook, "\iota ", swap, color = green  ] \arrow[ur, two heads, color = red ]\end{tikzcd}\end{equation}where
the blue arrows are $\mathrm{DG}$ $S$-module homomorphisms, where
the green arrows are chain maps as $R$-complexes, and where the red
arrows are $\mathrm{MDG}$ $A$-module homomorphisms.\end{theorem}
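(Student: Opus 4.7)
The plan is to produce mutually inverse DG $R$-algebra homomorphisms between $A^{\mathrm{as}}$ and $S\slash\mathfrak{s}$ by applying the two universal mapping properties at our disposal -- the UMP of the symmetric DG algebra on the one hand, and the UMP of the maximal associative quotient on the other. All other conclusions of the theorem, including the various identifications of $\langle A\rangle$ and the exactness of the hexagonal diagram, will then follow formally from this isomorphism.

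To construct the map $\overline{\pi}\colon S\slash\mathfrak{s}\to A^{\mathrm{as}}$, first observe that the projection $\pi^{\mathrm{as}}\colon A\to A^{\mathrm{as}}$ is a chain map into a DG $R$-algebra centered at $R$ (since $\langle A\rangle_0=0$) which restricts to the identity on $R$, so the UMP of $S=\mathrm{S}_R(A)$ yields a unique DG $R$-algebra homomorphism $\widetilde{\pi}\colon S\to A^{\mathrm{as}}$ extending $\pi^{\mathrm{as}}$. I would then check that $\widetilde{\pi}$ annihilates the generators $[a_1,a_2]=a_1\star a_2-a_1a_2$ of $\mathfrak{s}$, using the key point that the $\star$-product on $A$ descends to the (now associative) algebra multiplication of $A^{\mathrm{as}}$: indeed, $\widetilde{\pi}(a_1\star a_2)=\pi^{\mathrm{as}}(a_1)\cdot\pi^{\mathrm{as}}(a_2)=\widetilde{\pi}(a_1)\widetilde{\pi}(a_2)=\widetilde{\pi}(a_1a_2)$. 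Since $\mathfrak{s}$ is generated by such elements as an $S$-ideal and $\widetilde{\pi}$ is multiplicative, one concludes $\widetilde{\pi}(\mathfrak{s})=0$, so $\widetilde{\pi}$ descends to a DG $R$-algebra homomorphism $\overline{\pi}\colon S\slash\mathfrak{s}\to A^{\mathrm{as}}$.

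For the inverse, let $\overline{\iota}\colon A\to S\slash\mathfrak{s}$ be the composite $\rho\circ\iota$. By the very definition of $\mathfrak{s}$ we have $\overline{\iota}(a_1\star a_2)=\overline{a_1\star a_2}=\overline{a_1a_2}=\overline{\iota}(a_1)\overline{\iota}(a_2)$, so $\overline{\iota}$ is multiplicative. Viewing $S\slash\mathfrak{s}$ as an MDG $A$-module via $\overline{\iota}$, it is associative because the underlying algebra $S\slash\mathfrak{s}$ is. The UMP of the maximal associative quotient then produces a unique MDG $A$-module homomorphism $\widetilde{\iota}\colon A^{\mathrm{as}}\to S\slash\mathfrak{s}$ with $\widetilde{\iota}\circ\pi^{\mathrm{as}}=\overline{\iota}$, and multiplicativity of $\overline{\iota}$ ensures that $\widetilde{\iota}$ is in fact a DG $R$-algebra homomorphism.

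That $\overline{\pi}$ and $\widetilde{\iota}$ are mutually inverse is seen by noting that both composites fix the image of $A$, which generates $A^{\mathrm{as}}$ as an $R$-module and generates $S\slash\mathfrak{s}$ as an $R$-algebra, so the uniqueness clauses in the two UMPs force each composite to be the identity. From the resulting commutative triangle $\widetilde{\iota}\circ\pi^{\mathrm{as}}=\overline{\iota}$ we read off $\langle A\rangle=\ker\pi^{\mathrm{as}}=\ker\overline{\iota}=A\cap\mathfrak{s}$, and since $A=S^{\leq 1}$ by construction of $S$, this in turn equals $\mathfrak{s}^{\leq 1}=\ker(\mathfrak{s}\to S^{\geq 2})$. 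The four short exact sequences visible in the hexagonal diagram then fall out from these identifications together with the definitional exactness of $A\hookrightarrow S\twoheadrightarrow S^{\geq 2}$. I expect the main subtlety to lie in step two: verifying that $\widetilde{\pi}$ kills $\mathfrak{s}$ is precisely the place where one must reconcile the a priori non-associative $\star$-product on $A$ with the associative multiplication on $A^{\mathrm{as}}$ that the UMP of $S$ silently uses.
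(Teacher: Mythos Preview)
Your proposal is correct and follows essentially the same route as the paper: extend $\pi^{\mathrm{as}}$ to $S$ via the UMP of the symmetric DG algebra, check it kills the generators $[a_1,a_2]$ of $\mathfrak{s}$, then use the UMP of the maximal associative quotient on the composite $A\to S\to S/\mathfrak{s}$ to produce the inverse. One small notational slip: when you write ``the composite $\rho\circ\iota$'' you mean $\pi\circ\iota$, since in this section $\rho$ denotes the projection $S\twoheadrightarrow S^{\ge 2}$, not $S\twoheadrightarrow S/\mathfrak{s}$. The paper also makes explicit (via the identity $a_1a_2\cdots a_m=((\cdots(a_1\star a_2)\star a_3)\star\cdots)\star a_m$ in $S/\mathfrak{s}$) that $\pi\circ\iota$ is already surjective as a map of $R$-modules, which is slightly more than ``the image generates as an $R$-algebra'' but not needed for the isomorphism argument itself.
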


\begin{proof}\label{proof} Observe that $\pi^{\mathrm{as}}\colon S\twoheadrightarrow A^{\mathrm{as}}$
satifies
\begin{align*}
\pi^{\mathrm{as}}[a_{1},a_{2}] & =\pi^{\mathrm{as}}(a_{1}\star a_{2}-a_{1}a_{2})\\
 & =\pi^{\mathrm{as}}(a_{1}\star a_{2})-\pi^{\mathrm{as}}(a_{1}a_{2})\\
 & =\pi^{\mathrm{as}}(a_{1})\star\pi^{\mathrm{as}}(a_{2})-\pi^{\mathrm{as}}(a_{1})\star\pi^{\mathrm{as}}(a_{2})\\
 & =0.
\end{align*}
Thus the universal mapping property of the quotient $S\slash\mathfrak{s}=\mathrm{coker}(\mathfrak{s}\hookrightarrow S)$
implies there is a unique DG algebra homomorphism $\overline{\pi}^{\mathrm{as}}\colon S\slash\mathfrak{s}\to A^{\mathrm{as}}$
such that 
\[
\overline{\pi}^{\mathrm{as}}\circ\pi=\pi^{\mathrm{as}}.
\]
Similarly, note that the composite $\pi\circ\iota\colon A\to S\slash\mathfrak{s}$
is an MDG algebra homomorphism which is surjective. Indeed, if $a_{1}\cdots a_{m}$
is a homogeneous elementary tensor in $S^{m}$, then we have
\[
a_{1}a_{2}a_{3}\cdots a_{m}=((\cdots(a_{1}\star a_{2})\star a_{3})\star\cdots)\star a_{m}
\]
in $S\slash\mathfrak{s}$. Thus every element in $S\slash\mathfrak{s}$
can be represented by an element in $A=S^{1}$ which implies $\pi\iota\colon A\twoheadrightarrow S\slash\mathfrak{s}$
is surjective as claimed. In particular, since $S\slash\mathfrak{s}$
is associative, it follows from the universal mapping property of
the maximal associative quotient of $A$ that there is a unique DG
algebra homomorphism $\overline{\pi}\colon A^{\mathrm{as}}\to S\slash\mathfrak{s}$
such that
\[
\pi\circ\iota=\overline{\pi}\circ\pi^{\mathrm{as}}.
\]
Combining all of this together, we have a commutative diagram of MDG
$S$-modules: \begin{center}\begin{tikzcd}[column sep=50, row sep =50]

 S \arrow[r, " \pi  ", two heads] \arrow[dr, " \pi ^{\mathrm{as}} ", dashed ] & S \slash \mathfrak{s} \arrow[d, "\overline{\pi } ^{\mathrm{as}} ", shift left= 0.2, dashed ]

\\

 A \arrow[u, " \iota ", hook' ] \arrow[r, " \pi ^{\mathrm{as}} ", swap, two heads] & A^{\mathrm{as} } \arrow[u, dashed, " \overline{\pi } ", shift left=1] \end{tikzcd}\end{center}where the dashed arrows indicates uniqueness. \end{proof}

\begin{cor}\label{cor} Let $A$ be an $R$-complex centered at $R$
and let $S=\mathrm{S}_{R}(A)$ be the symmetric DG algebra of $A$.
Then a necessary condition for $A$ to have a DG algebra structure
is that the canonical short exact sequence of $R$-complexes \begin{equation}\label{equationfjskdl}\begin{tikzcd} 0 \arrow[r] &  A \arrow[r, "\iota "] &  S \arrow[r, "\rho " ] &  S^{ \geq 2} \arrow[r] & 0 \end{tikzcd}\end{equation}is
split. \end{cor}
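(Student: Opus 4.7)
The plan is to apply the universal mapping property of the symmetric DG algebra (Proposition part 3) to the identity chain map $\mathrm{id}_A \colon A \to A$. If $A$ has a DG algebra structure, then $A$ itself becomes a DG algebra centered at $R$, and $\mathrm{id}_A$ is a chain map whose restriction to homological degree zero is the identity ring homomorphism on $R$. Thus the hypotheses of the universal mapping property are met.

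First, I would invoke the UMP to obtain a unique DG algebra homomorphism $\widetilde{\mathrm{id}_A} \colon S \to A$ with $\widetilde{\mathrm{id}_A} \circ \iota = \mathrm{id}_A$. In particular $\widetilde{\mathrm{id}_A}$ is a chain map of $R$-complexes which is a retraction of $\iota$. Second, since the category of $R$-complexes is abelian, a short exact sequence splits as soon as the first arrow admits a retraction; hence the sequence
\[
\begin{tikzcd}[cramped] 0 \arrow[r] & A \arrow[r,"\iota"] & S \arrow[r,"\rho"] & S^{\geq 2} \arrow[r] & 0 \end{tikzcd}
\]
is split as a short exact sequence of $R$-complexes. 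Concretely, setting $K = \ker(\widetilde{\mathrm{id}_A})$, one gets an internal direct sum decomposition $S = \iota(A) \oplus K$ of $R$-complexes, and the projection $\rho$ restricts to an isomorphism $K \to S^{\geq 2}$ whose inverse is the desired section.

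Since the proof is essentially a direct application of the UMP, there is no real obstacle beyond a sanity check that the chain map produced is in fact a retraction in the category of $R$-complexes (not just of graded $R$-modules). This reduces to the observation that $\widetilde{\mathrm{id}_A}$ commutes with differentials by construction, and $\iota \colon A \hookrightarrow S$ is the inclusion of the total-degree $\leq 1$ part, on which the full differential $\mathrm{d}$ of $S$ restricts to the original differential of $A$. Thus $\widetilde{\mathrm{id}_A} \circ \iota = \mathrm{id}_A$ at the level of $R$-complexes, completing the argument.
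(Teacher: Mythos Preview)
Your proof is correct and is arguably more direct than the paper's. You produce a retraction of $\iota$ by applying the universal mapping property of $S$ to the identity map $A\to A$ (viewed as a chain map into a DG algebra), which immediately splits the sequence. The paper instead goes through Theorem~(\ref{theorempresentationsym}): assuming $A=A^{\mathrm{as}}$, the presentation theorem gives $\langle A\rangle=\mathfrak{s}^{\leq1}=0$, so the canonical surjection $\mathfrak{s}\to S^{\geq2}$ (from the hexagonal diagram) becomes an isomorphism; composing its inverse with the inclusion $\mathfrak{s}\hookrightarrow S$ yields a section $\theta\colon S^{\geq2}\to S$ of $\rho$. Your retraction is in fact the same map as the paper's $\pi\colon S\to A^{\mathrm{as}}=A$, but you reach it without invoking the full presentation theorem; the paper's route, while slightly longer, makes explicit the complementary subcomplex $\mathfrak{s}\cong S^{\geq2}$ sitting inside $S$.
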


\begin{proof}\label{proof} Indeed, assume that $A=A^{\mathrm{as}}$.
Then the canonical map $\mathfrak{s}\to S^{\geq2}$ defined on multiplicators
by
\[
[a_{1},a_{2}]\mapsto a_{1}a_{2}
\]
is an isomorphism of $R$-complexes. Let $\theta\colon S^{\geq2}\xrightarrow{\simeq}\mathfrak{s}\hookrightarrow S$
be the composite map where $S^{\geq2}\xrightarrow{\simeq}\mathfrak{s}$
is the inverse isomorphism of the canonical map $\mathfrak{s}\to S^{\geq2}$.
We obtain a short exact sequence of $R$-complexes \begin{equation}\label{equation}\begin{tikzcd} 0 \arrow[r] &  S^{\geq 2} \arrow[r, "\theta "] &  S \arrow[r, "\pi " ] &  A \arrow[r] & 0 \end{tikzcd}\end{equation}which
is split by the inclusion map $\iota\colon A\to S$. Similarly, the
short exact sequence of $R$-complexes \begin{equation}\label{equation}\begin{tikzcd} 0 \arrow[r] &  A \arrow[r, "\iota "] &  S \arrow[r, "\rho " ] &  S^{ \geq 2} \arrow[r] & 0 \end{tikzcd}\end{equation}is
split by $\theta\colon S^{\geq2}\to S$. \end{proof}

~~~~~~In the case where $A=P$ is any projective resolution
of a cyclic $R$-module such that $P_{0}=R$, then it turns out that
the short exact sequence (\ref{equationfjskdl}) always splits.

\begin{prop}\label{propsurjkls} Let $R$ be a commutative ring, let
$I$ be an ideal of $R$, let $P$ be a projective resolution of $R\slash I$
over $R$, such that $P_{0}=R$, and let $S=\mathrm{S}_{R}(P)$ be
the symmetric DG algebra of $P$ over $R$. There exists a surjective
chain map $\pi\colon S\twoheadrightarrow P$ which splits the inclusion
map $P\hookrightarrow S$. \end{prop}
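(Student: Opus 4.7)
The plan is to build $\pi$ by induction on homological degree, leveraging the projectivity of the bi-graded pieces of $S$ together with the acyclicity of $P$ in positive degrees. First observe that under the canonical identification $S^{\leq 1} = R \oplus P_+ \cong P$ of $R$-complexes, the inclusion $\iota \colon P \hookrightarrow S$ becomes $S^{\leq 1} \hookrightarrow S$. Hence the splitting condition $\pi \iota = \mathrm{id}_P$ reduces to arranging $\pi|_{S^{\leq 1}} = \mathrm{id}_P$, and surjectivity then follows automatically.

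The inductive construction would proceed as follows. For $i \in \{0,1\}$ one has $S_i = P_i$, so I would set $\pi_i = \mathrm{id}_{P_i}$. For $i \geq 2$, using the decomposition $S_i = P_i \oplus S^{\geq 2}_i$, I set $\pi_i|_{P_i} = \mathrm{id}_{P_i}$, so the real task is defining $\pi_i$ on $S^{\geq 2}_i$. For $s \in S^{\geq 2}_i$ the element $\pi_{i-1}(\mathrm{d} s) \in P_{i-1}$ is a cycle, because the inductive chain map identity gives $\mathrm{d}\pi_{i-1}\mathrm{d} = \pi_{i-2}\mathrm{d}^2 = 0$; since $i - 1 \geq 1$ and $P$ is a resolution, this cycle is in fact a boundary, i.e.\ it lies in $B_{i-1}(P) = \mathrm{d}(P_i)$. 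Provided $S^{\geq 2}_i$ is a projective $R$-module, I can lift the resulting $R$-linear map $\pi_{i-1}\mathrm{d} \colon S^{\geq 2}_i \to B_{i-1}(P)$ along the surjection $\mathrm{d} \colon P_i \twoheadrightarrow B_{i-1}(P)$ to obtain $\pi_i|_{S^{\geq 2}_i} \colon S^{\geq 2}_i \to P_i$ satisfying $\mathrm{d}\pi_i = \pi_{i-1}\mathrm{d}$.

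The main technical point is justifying projectivity of $S^{\geq 2}_i$. Since $P$ is centered at $R$, the underlying graded $R$-algebra of $S$ coincides with the strictly graded-commutative algebra on the graded projective module $P_+$, and decomposes as $\mathrm{Sym}_R(P_{\mathrm{even},+}) \otimes_R \Lambda_R(P_{\mathrm{odd}})$. Both symmetric and exterior powers preserve projectivity: writing $P_j$ as a direct summand of a free module $F_j = P_j \oplus Q_j$, the natural decompositions $\mathrm{Sym}^m(F_j) = \bigoplus_{k} \mathrm{Sym}^k(P_j) \otimes \mathrm{Sym}^{m-k}(Q_j)$ and the analogous one for $\Lambda^m$ exhibit $\mathrm{Sym}^m(P_j)$ and $\Lambda^m(P_j)$ as direct summands of the free modules $\mathrm{Sym}^m(F_j)$ and $\Lambda^m(F_j)$. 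It follows that each bi-graded piece $S^m_i$ is projective; and since each factor in a total-degree-$m$ elementary product has homological degree $\geq 1$ we have $m \leq i$, so $S^{\geq 2}_i = \bigoplus_{m=2}^{i} S^m_i$ is a finite direct sum of projective modules, hence projective.

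Putting the pieces together, the constructed $\pi$ is a chain map (automatic on $S^{\leq 1} = P$ because $\pi$ restricts to the identity and $P$ is a subcomplex of $S$, and by construction on $S^{\geq 2}$), and $\pi|_P = \mathrm{id}_P$ gives both surjectivity and the splitting $\pi\iota = \mathrm{id}_P$.
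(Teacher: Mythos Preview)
Your proof is correct. The inductive lifting argument works exactly as you describe: the key inputs are (i) projectivity of each graded piece $S^{\geq 2}_i$, which you justify correctly via the decomposition of the graded-symmetric algebra into tensor products of symmetric and exterior powers of the projective modules $P_j$, and (ii) acyclicity of $P$ in positive degrees, which guarantees the target cycle $\pi_{i-1}(\mathrm{d}s)$ is a boundary.

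The paper takes a different, more abstract route. Rather than constructing $\pi$ by hand, it observes that the short exact sequence $0\to P\to S\to S/P\to 0$ of complexes splits if and only if $\mathrm{Ext}^1_R(S/P,P)=0$. Since $S/P$ has projective terms and is bounded below, it is semiprojective, so $\mathrm{Hom}^{\star}_R(S/P,-)$ preserves the quasi-isomorphism $P\simeq R/I$; this yields $\mathrm{Ext}^1_R(S/P,P)\cong\mathrm{Ext}^1_R(S/P,R/I)$, and the latter vanishes for degree reasons because $(S/P)_i=0$ for $i\leq 1$ while $R/I$ sits in degree $0$. Your argument is in effect the explicit unpacking of this vanishing: the degree-by-degree lifting you perform is precisely what underlies the semiprojectivity lifting property. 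The paper's approach is shorter and isolates the obstruction conceptually; yours is more constructive and makes the projectivity hypotheses on the pieces of $S$ visible.
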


\begin{proof}\label{proof} It suffices to show that $\mathrm{Ext}_{R}^{1}(S\slash P,P)=0$.
Note that the underlying graded $R$-module of $S\slash P$ is just
$S^{\geq2}=\bigoplus_{n\geq2}P_{+}^{\otimes n}$. In particular, $S\slash P$
is semi-projective, thus $\mathrm{Hom}_{R}^{\star}(S\slash P,-)$
preserves quasi-isomorphisms. It follows that
\[
\mathrm{Ext}_{R}^{1}(S\slash P,P)=\mathrm{Ext}_{R}^{1}(S\slash P,R\slash I)=0,
\]
where the last part follows from the fact that $R\slash I$ sits in
homological degree $0$ but $(S\slash P)_{i}=0$ for all $i\leq1$.
\end{proof}

\begin{rem}\label{rem} Note that giving a surjective chain map $\pi\colon S\twoheadrightarrow P$
which splits the inclusion map is equivalent to giving chain maps
$\pi^{n}\colon P^{\otimes n}\to P$ for each $n\geq2$ such that each
$\pi^{n}$ is strictly commutative and such that for all $1\leq i\leq n$
and for all $a_{1},\dots,a_{i-1},a_{i+1},\dots,a_{n}\in P_{+}$ we
have
\[
\pi^{n}(a_{1},\dots,a_{i-1},1,a_{i},\dots,a_{n})=\pi^{n-1}(a_{1},\dots,a_{i-1},a_{i},\dots,a_{n}).
\]

For instance, if $a_{1},a_{2},a_{3}$ are homogeneous elements in
$F$ with $|a_{1}|=1$ and $|a_{2}|,|a_{3}|\geq2$, then we have
\[
\mathrm{d}\pi^{3}(a_{1},a_{2},a_{3})=r_{1}\pi^{2}(a_{2},a_{3})-\pi^{3}(a_{1},\mathrm{d}a_{2},a_{3})+\pi^{3}(a_{1},a_{2},\mathrm{d}a_{3}),
\]
where $r_{1}=\mathrm{d}a_{1}$. \end{rem}

\subsection{Symmetric Powers of Chain Complexes}

In this subsection, we describe a construction given by Tchernev (in
\cite{Tch95}) and explain how it is related to our construction.
In particular, let $X$ be an $R$-complex. We construct the \emph{non-unital
}symmetric DG algebra of $X$ over $R$, denoted $\mathrm{C}_{R}(X)$
as follows: we begin with the non-unital tensor DG algebra of $X$
over $R$, given by
\[
\mathrm{U}_{R}(X)=\bigoplus_{n=1}^{\infty}X^{\otimes n}
\]
where the tensor product is taken as $R$-complexes. Just as before,
an elementary tensor in $U=\mathrm{U}_{R}(A)$ is denoted $\boldsymbol{x}=x_{1}\otimes\cdots\otimes x_{n}$
where $x_{1},\dots,x_{n}\in X$ and $n\geq1$, and the differential
of $U$ is denoted by $\mathrm{d}$ again to simplify notation and
is defined on $\boldsymbol{x}$ by
\[
\mathrm{d}\boldsymbol{x}=\sum_{j=1}^{n}(-1)^{|x_{1}|+\cdots+|x_{j-1}|}x_{1}\otimes\cdots\otimes\mathrm{d}x_{j}\otimes\cdots\otimes x_{n}.
\]

We say\textbf{ $\boldsymbol{x}$} is a homogeneous elementary tensor
if each $x_{i}$ is a homogeneous element in $X$. What is different
this time is that we equip $U=\mathrm{U}_{R}(X)$ with a different
bi-graded structure; namely we set
\[
|\boldsymbol{x}|=\sum_{i=1}^{n}|x_{i}|\quad\text{and}\quad\deg\boldsymbol{x}=n.
\]
Thus we make no distinction on whether or not $x_{i}\in X_{0}$ or
$x_{i}\in X_{<0}$. With $|\cdot|$ and $\deg$ defined as above,
we observe that $U$ admits a bi-graded decomposition:
\[
U=\bigoplus_{i\in\mathbb{Z}}U_{i}=\bigoplus_{n\geq1}U^{n}=\bigoplus_{i,n}U_{i}^{n},
\]
where the component $U_{i}^{n}$ consists of all finite $R$-linear
combinations of homogeneous elementary tensors $\boldsymbol{x}\in U$
such that $|\boldsymbol{x}|=i$ and $\deg\boldsymbol{x}=n$. We equip
$U$ with an associative (but not commutative nor unital) bi-graded
$R$-bilinear multiplication which is defined on homogeneous elementary
tensors by $(\boldsymbol{x},\boldsymbol{x}')\mapsto\boldsymbol{x}\otimes\boldsymbol{x}'$
and is extended $R$-bilinearly everywhere else. This multiplication
is easily seen to satisfy Leibniz rule, however note that $U$ is
not unital under this multiplication since $(1,1)\mapsto1\otimes1\neq1$
(hence why we call this the \emph{non-unital }tensor DG algebra). 

~~~~~~~Next let $\mathfrak{c}=\mathfrak{c}(X)$ be the $U$-ideal
generated by all elements of the form
\[
[x_{1},x_{2}]_{\sigma}:=(-1)^{|x_{1}||x_{2}|}x_{2}\otimes x_{1}-x_{1}\otimes x_{2}\quad\text{and}\quad[x]_{\tau}:=x\otimes x,
\]
where $x,x_{1},x_{2}\in X$ are homogeneous and $|x|$ is odd. We
then define the \textbf{non-unital symmetric DG algebra }of $X$ over
$R$ to be the quotient
\[
\mathrm{C}_{R}(X):=U\slash\mathfrak{c}.
\]
Since the generators of $\mathfrak{c}$ are homogeneous with respect
to both homological and total degree, we see that $C=\mathrm{C}_{R}(X)$
inherits a bi-graded structure from $U$. In particular, if $X$ is
a positive $R$-complex (meaning $X_{i}=0$ for all $i<0$), then
one has $C_{0}^{n}=\mathrm{Sym}_{R}^{n}(X_{0})$. In general, we call
$C^{n}$ the \textbf{$n$th symmetric power }of $X$. The second symmetric
power and its properties were studied in \cite{FST08}. The next proposition
helps clarify how our construction is related to Tchernev's construction:

\begin{prop}\label{prop} Let $A$ be an $R$-complex centered at
$R$. Denote $S=\mathrm{S}_{R}(A)$ and $C=\mathrm{C}_{R}(A)$. We
have $S^{\leq n}\cong C^{n}$ as $R$-complexes. \end{prop}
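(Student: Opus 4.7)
The plan is to exhibit an explicit isomorphism $\phi \colon C^n \to S^{\leq n}$ of $R$-complexes that sends an elementary tensor to the corresponding elementary product (absorbing any $R$-factors into multiplication), together with an inverse $\psi$ that pads a product of length $k$ with $n-k$ copies of the unit.

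First I would construct $\phi$ from the observation that both $C$ and $S$ are quotients of tensor algebras built from $A$ by the same graded symmetry relations. Indeed, the generators $[x_1,x_2]_\sigma$ and $[x]_\tau$ of $\mathfrak{c}$ used in constructing $C$ are also killed in $S$ (they sit inside the ideal $\mathfrak{t}$ used to build $S$). The additional relations imposed in $S$ via $\mathfrak{u}$ are precisely the ones that absorb scalars $r\in R$ into the multiplication, i.e., that identify $r \otimes a$ with $ra$. This gives a canonical surjective chain map $q \colon C \to S$, and an elementary tensor $x_1 \otimes \cdots \otimes x_n \in C^n$ having exactly $k$ of its entries in $A_+$ is sent by $q$ into $S^k$. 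Hence $\phi := q|_{C^n}$ lands in $S^{\leq n}$, and it commutes with the differential because both differentials obey the same Leibniz rule.

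Next I would verify that $\phi$ is a bijection on underlying graded $R$-modules using the decomposition $A = R \oplus A_+$. Since symmetric powers commute with direct sums and $\mathrm{Sym}_R^j(R) = R$ for all $j \geq 0$, we get
\[
C^n = \mathrm{Sym}_R^n(A) = \bigoplus_{k=0}^{n} \mathrm{Sym}_R^{n-k}(R) \otimes_R \mathrm{Sym}_R^k(A_+) = \bigoplus_{k=0}^{n} \mathrm{Sym}_R^k(A_+) = \bigoplus_{k=0}^{n} S^k = S^{\leq n},
\]
and this chain of identifications is exactly how $\phi$ acts on the graded pieces. An explicit inverse on the $S^k$ summand is $\psi(a_1 \cdots a_k) = a_1 \otimes \cdots \otimes a_k \otimes 1^{\otimes (n-k)}$, well-defined because $C^n$ is already strictly graded-commutative in its slots.

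Finally I would verify the chain-map property by decomposing the differential on $S$ as $\mathrm{d} = \eth + \partial$, where $\eth$ preserves total degree and $\partial$ lowers it by one. Differentiating a slot of $C^n$ holding an element of $A_{\geq 2}$ produces something still in $A_+$, preserving the number of non-unit slots, which matches $\eth$. Differentiating a slot holding an element of $A_1$ produces an element of $A_0 = R$ which, by $R$-bilinearity of the tensor product, may be pulled out of its slot and absorbed as a scalar into another factor; this lowers the non-unit count by one and matches $\partial$. Differentiating a slot in $R$ contributes zero since $A_{<0} = 0$. The main obstacle is the sign bookkeeping in the $A_1$-case when a scalar is moved out of its slot and reattached by symmetry to another factor; however, both $C$ and $S$ use the same Koszul sign rule inherited from the differential on $A$, so the signs match automatically. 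This shows $\phi$ is an isomorphism of $R$-complexes.
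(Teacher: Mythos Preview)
Your proof is correct and follows essentially the same approach as the paper: your map $\phi$ is exactly the paper's dehomogenization $\varphi_d$ (sending $1^{\otimes(n-k)}\otimes\boldsymbol{a}\mapsto\boldsymbol{a}$) and your inverse $\psi$ is the paper's homogenization $\varphi_h$. The only difference is packaging: you realize $\phi$ as the restriction of a global surjection $q\colon C\to S$ and verify bijectivity via the decomposition $\mathrm{Sym}_R^n(R\oplus A_+)\cong\bigoplus_{k=0}^n\mathrm{Sym}_R^k(A_+)$, whereas the paper simply writes down the two maps and asserts that checking the chain-map and inverse properties is straightforward; your treatment of the differential via $\eth+\partial$ in fact supplies the details the paper omits.
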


\begin{proof}\label{proof} Define $\varphi_{h}\colon S^{\leq n}\to C^{n}$,
called \textbf{homogenization}, as follows: let $f\in S^{\leq n}$
and express it as $f=\sum_{k=0}^{n}f^{k}$ where $f^{k}$ is the total
degree $k$ component of $f$. We set
\[
\varphi_{h}(f)=1^{n-1}\otimes f^{0}+\sum_{k=1}^{n}1^{\otimes(n-k)}\otimes f^{k}.
\]
Conversely, define $\varphi_{d}\colon C^{n}\to S^{\leq n}$, called
\textbf{dehomogenization}, as follows: we set
\[
\varphi_{d}(1^{\otimes k}\otimes\boldsymbol{a})=\boldsymbol{a}
\]
where $\boldsymbol{a}\in A_{+}^{\otimes(n-k)}$ is a homogeneous elementary
tensor. We extend $\varphi_{d}$ everywhere else $R$-linearly. It
is straightforward to check that both $\varphi_{h}$ and $\varphi_{d}$
are chain maps and are inverse to each other. \end{proof}

~~~~~~ Let $X$ be an $R$-complex. Denote $C=\mathrm{C}_{R}(X)$,
$\mathfrak{c}=\mathfrak{c}(X)$, and $U=\mathrm{U}_{R}(X)$. There's
an alternative description of $C^{n}$ which in the case where $R$
contains $\mathbb{Q}$ which is often useful. Let $\sigma=(ij)$ be
a transposition in the symmetric group $\Sigma_{n}$ and let $\boldsymbol{x}=x_{1}\otimes\cdots\otimes x_{n}$
be a homogeneous elementary tensor in $U$. We set
\begin{equation}
\sigma\boldsymbol{x}=\begin{cases}
0 & \text{if }x_{i}=x_{j}\text{ and }|x_{i}|\text{ is odd}\\
(-1)^{|x_{i}||x_{j}|}x_{1}\otimes\cdots x_{j}\otimes\cdots\otimes x_{i}\otimes\cdots\otimes x_{n} & \text{else}.
\end{cases}\label{eq:extendssym}
\end{equation}
Then (\ref{eq:extendssym}) extends to an action of the symmetric
group $\Sigma_{n}$ on $U^{n}$. In particular, $U^{n}$ has the structure
of an $R[\Sigma_{n}]$-module. With this understood, we have $C^{n}=(U^{n})_{\Sigma_{n}}$.
If $R$ contains $\mathbb{Q}$, then the short exact sequence of $R$-complexes\begin{equation}\label{equation}\begin{tikzcd} 0 \arrow[r] &  \mathfrak{c} \arrow[r] &  U \arrow[r] &  C \arrow[r] & 0 \end{tikzcd}\end{equation}is
split exact with splitting map $C\to U$ defined on homogeneous elementary
products by
\[
x_{1}\cdots x_{n}\mapsto\frac{1}{n!}\sum_{\sigma\in\Sigma_{n}}\sigma(x_{1}\otimes\cdots\otimes x_{n}).
\]
In particular, we may identify $C^{n}$ with the $R$-subcomplex of
$U^{n}$ which is fixed by $\Sigma_{n}$ in this case. 

\begin{theorem}\label{theorem} Assume that $\mathbb{Q}\subseteq R$.
Let $\varphi,\psi\colon X\to X'$ be chain maps of $R$-complexes.
Denote $C=\mathrm{C}_{R}(X)$, $C'=\mathrm{C}_{R}(X')$, $U=\mathrm{U}_{R}(X)$,
and $U'=\mathrm{U}_{R}(X')$, and identify $C$ and $C'$ with the
$R$-subcomplexes of $U$ and $U'$ fixed by the symmetric groups.
If $\varphi$ is homotopic to $\psi$, then $\varphi^{\otimes n}$
is homotopic to $\psi^{\otimes n}$ for each $n$. Moreover, we can
choose a homotopy $h^{n}\colon U^{n}\to U'^{n}$ from $\varphi^{\otimes n}$
to $\psi^{\otimes n}$ which restricts to a homotopy $h^{n}|_{C}\colon C^{n}\to C'^{n}$
from $\varphi^{\otimes n}|_{C}$ to $\psi^{\otimes}|_{C}$. \end{theorem}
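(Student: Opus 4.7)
The plan is to build a homotopy $H^n\colon U^n \to U'^n$ between $\varphi^{\otimes n}$ and $\psi^{\otimes n}$ via the standard telescoping formula, then to average over $\Sigma_n$ so that the homotopy becomes equivariant and restricts to $C^n$. The hypothesis $\mathbb{Q}\subseteq R$ will be used only in the averaging step, in order to divide by $n!$ and to appeal to the identification of $C^n$ with the fixed subcomplex $(U^n)^{\Sigma_n}$ given in the excerpt.

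Fix a degree-$1$ map $h\colon X \to X'$ with $\mathrm{d}h + h\mathrm{d} = \psi - \varphi$. First I would define the degree-$1$ map
\[
H^n := \sum_{i=0}^{n-1} \psi^{\otimes i} \otimes h \otimes \varphi^{\otimes (n-1-i)} \colon U^n \longrightarrow U'^n,
\]
where the tensor product of graded maps is taken with the usual Koszul sign convention. A direct computation using $\mathrm{d}h + h\mathrm{d} = \psi - \varphi$ and the fact that $\varphi,\psi$ are chain maps shows that each summand contributes two boundary terms of the form $\pm\,\psi^{\otimes j}\otimes\varphi^{\otimes(n-j)}$, and summing over $i$ these telescope to
\[
\mathrm{d}H^n + H^n \mathrm{d} = \psi^{\otimes n} - \varphi^{\otimes n}.
\]
The case $n=2$ already exhibits the full sign pattern (the cross terms involving $\mathrm{d}\varphi$ and $\mathrm{d}\psi$ cancel because $\varphi,\psi$ are chain maps), and the general case follows either by induction on $n$ or by the same telescoping argument.

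However, $H^n$ is not $\Sigma_n$-equivariant, because $h$ occupies a distinguished slot, so it need not restrict to the fixed subcomplex. To fix this I would use $\mathbb{Q}\subseteq R$ and symmetrize:
\[
\widetilde{H}^n := \frac{1}{n!} \sum_{\sigma \in \Sigma_n} \sigma \circ H^n \circ \sigma^{-1}.
\]
The $\Sigma_n$-action on $U^n$ and $U'^n$ described in the excerpt is by chain maps, and both $\varphi^{\otimes n}$ and $\psi^{\otimes n}$ are $\Sigma_n$-invariant (every tensor factor is the same map). Therefore conjugation by $\sigma$ commutes with the Hom-differential and preserves the endpoints, which gives
\[
\mathrm{d}\widetilde{H}^n + \widetilde{H}^n \mathrm{d} = \frac{1}{n!}\sum_{\sigma\in\Sigma_n} \sigma\bigl(\psi^{\otimes n} - \varphi^{\otimes n}\bigr)\sigma^{-1} = \psi^{\otimes n} - \varphi^{\otimes n}.
\]
By construction $\widetilde{H}^n$ commutes with the $\Sigma_n$-action, so it restricts to a degree-$1$ map $(U^n)^{\Sigma_n} \to (U'^n)^{\Sigma_n}$. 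Under the identifications $C^n = (U^n)^{\Sigma_n}$ and $C'^n = (U'^n)^{\Sigma_n}$ afforded by $\mathbb{Q}\subseteq R$, this restriction is the desired homotopy from $\varphi^{\otimes n}|_C$ to $\psi^{\otimes n}|_C$. The main technical obstacle is the sign bookkeeping in the telescoping verification of Paragraph~2; once that is in hand, the averaging step is purely formal and uses the $\mathbb{Q}$-hypothesis in precisely the two places it is needed.
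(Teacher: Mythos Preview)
Your proposal is correct and follows essentially the same route as the paper: build the telescoping homotopy $\sum \psi^{\otimes i}\otimes h\otimes\varphi^{\otimes(n-1-i)}$ on $U^n$, then average over $\Sigma_n$ using $\mathbb{Q}\subseteq R$ to obtain an equivariant homotopy that restricts to $C^n$. The only cosmetic differences are the ordering of $\varphi$ and $\psi$ in the telescope and that you symmetrize by conjugation $\sigma\,H^n\,\sigma^{-1}$ whereas the paper writes a one-sided average $\sigma\,H^n$; your conjugation version is in fact the cleaner choice, since it yields a genuine homotopy on all of $U^n$ (not merely on the invariant subcomplex) that is $\Sigma_n$-equivariant, which is exactly what the theorem statement asks for.
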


\begin{proof}\label{proof} Let $h$ be a homotopy from $\varphi$
to $\psi$. For $n=1$, we set $h^{1}=h$. The case where $n=2$ was
shown in \cite{FST08}. More generally for $n\geq2$ we set
\[
h^{n}:=\frac{1}{n!}\sum_{\sigma\in\Sigma_{n}}\sigma\left(\sum_{k=0}^{n-1}(\varphi^{\otimes(n-k-1)}\otimes h\otimes\psi^{\otimes k})\right).
\]
One checks that $h^{n}$ is a homotopy from $\varphi^{\otimes n}$
to $\psi^{\otimes n}$ and by construction is restricts to a map from
$C^{n}$ to $C'^{n}$. \end{proof}

\begin{cor}\label{cor} Assume that $\mathbb{Q}\subseteq R$. Let
$\varphi,\psi\colon A\to A'$ be chain maps of $R$-complexes centered
at $R$. Denote $S=\mathrm{S}_{R}(A)$ and $S'=\mathrm{S}_{R}(A')$,
and let $\widetilde{\varphi},\widetilde{\psi}\colon S\to S'$ be the
lifts of $\varphi$ and $\psi$ from the universal mapping property.
If $\varphi$ is homotopic to $\psi$, then $\widetilde{\varphi}$
is homotopic to $\widetilde{\psi}$. \end{cor}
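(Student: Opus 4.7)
The plan is to combine the isomorphism $S^{\leq n} \cong C^n(A)$ from the preceding proposition with the preceding theorem on symmetric chain homotopies of tensor powers: one constructs $\widetilde{h}$ one filtration piece at a time and then glues. The key observation is that under the isomorphism $\varphi_{d}\colon C^{n}(A) \xrightarrow{\sim} S^{\leq n}$ (and its primed analogue), the restriction $\widetilde{\varphi}|_{S^{\leq n}}$ corresponds to $\varphi^{\otimes n}|_{C^{n}(A)}$, and likewise for $\widetilde{\psi}$. Indeed, for an elementary product $a_{1}\cdots a_{m} \in S^{m}$ with $a_{i} \in A_{+}$ and $m \le n$, dehomogenization traces it back to the symmetrization of $1^{\otimes(n-m)}\otimes a_{1}\otimes\cdots\otimes a_{m}$; applying $\varphi^{\otimes n}$ and using $\varphi(1)=1$ (forced by $\widetilde{\varphi}$ being a DG algebra homomorphism extending $\varphi$) gives the symmetrization of $1^{\otimes(n-m)}\otimes \varphi(a_{1})\otimes\cdots\otimes\varphi(a_{m})$, which dehomogenizes to $\varphi(a_{1})\cdots\varphi(a_{m}) = \widetilde{\varphi}(a_{1}\cdots a_{m})$.

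Next, by the preceding theorem applied with $X=A$ and $X'=A'$, for each $n\ge 1$ there is a homotopy $h^{n}\colon A^{\otimes n}\to A'^{\otimes n}$ between $\varphi^{\otimes n}$ and $\psi^{\otimes n}$ which restricts to a chain homotopy $C^{n}(A)\to C^{n}(A')$. Transporting along the isomorphisms above produces a chain homotopy $\widetilde{h}^{(n)}\colon S^{\leq n}\to S'^{\leq n}$ between $\widetilde{\varphi}|_{S^{\leq n}}$ and $\widetilde{\psi}|_{S^{\leq n}}$.

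The main obstacle is to prove compatibility with the inclusions, namely $\widetilde{h}^{(n+1)}|_{S^{\leq n}} = \widetilde{h}^{(n)}$, so that the family $\{\widetilde{h}^{(n)}\}$ assembles into a single graded $R$-linear map $\widetilde{h}\colon S\to S'$ of degree $1$. Unwinding the isomorphisms, this reduces to showing that $h^{n+1}$, applied to tensors of the form $1\otimes\mathbf{a}$ with $\mathbf{a}\in A^{\otimes n}$, projects under dehomogenization to the same element as $h^{n}(\mathbf{a})$. Using the defining formula
\[
h^{n+1} = \tfrac{1}{(n+1)!}\sum_{\sigma\in\Sigma_{n+1}}\sigma\Bigl(\sum_{k=0}^{n}\varphi^{\otimes(n-k)}\otimes h\otimes\psi^{\otimes k}\Bigr),
\]
the terms in which $h$ lands on the padded $1$ contribute a factor of $h(1)\in A'_{1}$; the characteristic-zero averaging distributes these contributions symmetrically across the $n+1$ slots, and together with the $\partial$-component of the differential of $S'$ they account exactly for the discrepancy. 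Once compatibility is in hand, the assembled map $\widetilde{h}$ satisfies $\mathrm{d}\widetilde{h}+\widetilde{h}\mathrm{d} = \widetilde{\varphi}-\widetilde{\psi}$, completing the proof.
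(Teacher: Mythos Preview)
Your overall strategy---transport the symmetric-power homotopies $h^{n}$ through the isomorphisms $S^{\leq n}\cong C^{n}$ and then glue along the filtration---is the natural reading of the corollary, and your identification of $\widetilde{\varphi}|_{S^{\leq n}}$ with $\varphi^{\otimes n}|_{C^{n}}$ is correct. The paper itself supplies no proof, so you are filling in the intended argument.

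The genuine gap is the compatibility step. The family $\{\widetilde{h}^{(n)}\}$ is in general \emph{not} compatible with the inclusions $S^{\leq n}\hookrightarrow S^{\leq n+1}$ when $h(1)\neq 0$, and the sentence about the $\partial$-component of the differential ``accounting exactly for the discrepancy'' does not repair this: the discrepancy lives in the definition of $\widetilde{h}$ itself, not in the homotopy identity. Concretely, take $n=1$. One has $\widetilde{h}^{(1)}(1)=h(1)$, whereas applying the formula for $h^{2}$ to the invariant element $1\otimes 1$ gives
\[
h^{2}(1\otimes 1)=1\otimes h(1)+h(1)\otimes 1,
\]
which passes to $2[1\otimes h(1)]$ in $C^{2}(A')$ and dehomogenizes to $2h(1)$. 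Thus $\widetilde{h}^{(2)}(1)=2h(1)\neq h(1)=\widetilde{h}^{(1)}(1)$ whenever $h(1)\neq 0$. Since $\varphi(1)=\psi(1)=1$ forces only that $h(1)\in A'_{1}$ be a cycle, there is no hypothesis guaranteeing $h(1)=0$, nor can you in general modify $h$ to achieve this without changing the homotopy identity in degree~$1$.

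To close the gap you must either build the global homotopy on $S$ directly---for instance via a graded-derivation--type formula on the free graded-commutative generators $A_{+}$, where every factor lies in positive homological degree so the $h(1)$ issue never arises, and then verify the homotopy identity against both the $\eth$- and $\partial$-parts of the differential by hand---or else produce a coherent system of correction terms making the $\widetilde{h}^{(n)}$ genuinely compatible before assembling them.
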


\subsection{The Symmetric DG Algebra of a Finite Free Complex over an Integral
Domain}

Throughout this subsection, we assume that $R$ is an integral domain
with quotient field $K$. Let $F$ be an $R$-complex centered at
$R$ such that the underlying graded $R$-module of $F$ is finite
and free. Let $e_{1},\dots,e_{n}$ be an ordered homogeneous basis
of $F_{+}$ as a graded $R$-module which is ordered in such a way
that if $i<j$, then $|e_{i}|\le|e_{j}|$. We denote by $R[\boldsymbol{e}]=R[e_{1},\dots,e_{n}]$
to be the free \emph{non-strict }graded-commutative $R$-algebra generated
by $e_{1},\dots,e_{n}$. In particular, if $e_{i}$ and $e_{j}$ are
distinct, then we have
\[
e_{i}e_{j}=(-1)^{|e_{i}||e_{j}|}e_{j}e_{i},
\]
in $R[\boldsymbol{e}]$, however elements of odd degree do not square
to zero in $R[\boldsymbol{e}]$. The reason we do not want elements
of odd degree to square to zero is because we will want to calculate
Gröbner bases in $K[\boldsymbol{e}]$, and the theory of Gröbner bases
for $K[\boldsymbol{e}]$ is much simpler when we do not have any zerodivisors.
In any case, one recovers the symmetric DG $R$-algebra of $F$ as
below:
\[
R[\boldsymbol{e}]\slash\langle\{e_{i}^{2}\mid|e_{i}|\text{ is odd}\}\rangle\simeq\mathrm{S}_{R}(F).
\]
Finally, equip $F$ with a multiplication $\mu$ giving it the structure
of an MDG algebra. Our goal is to compute the maximal associative
quotient of $F$ using the presentation given in Theorem~(\ref{theorempresentationsym})
as well as the theory of Gröbner bases in $K[\boldsymbol{e}]$.

\subsubsection{Monomials and Monomial Orderings}

Before we can do this, we first need to introduce some notation for
Gröbner basis applications in $K[\boldsymbol{e}]$. Our notation mostly
follows \cite{BE77} and \cite{Mot10} however we introduce some of
our own notation as well. A \textbf{monomial }in $K[\boldsymbol{e}]$
is an element of the form
\begin{equation}
e^{\boldsymbol{\alpha}}=e_{1}^{\alpha_{1}}\cdots e_{n}^{\alpha_{n}}\label{eq:monomial}
\end{equation}
where $\boldsymbol{\alpha}=(\alpha_{1},\dots,\alpha_{n})\in\mathbb{N}^{n}$
is called the \textbf{multidegree }of $e^{\boldsymbol{\alpha}}$ and
is denoted $\mathrm{multideg}(e^{\boldsymbol{\alpha}})=\boldsymbol{\alpha}.$
Similarly we define its \textbf{total degree}, denoted $\mathrm{deg}(e^{\boldsymbol{\alpha}})$,
and its \textbf{homological degree}, denoted $|e^{\boldsymbol{\alpha}}|$,
by
\[
\mathrm{deg}(e^{\boldsymbol{\alpha}})=\sum_{i=1}^{n}\alpha_{i}\quad\text{and}\quad|e^{\boldsymbol{\alpha}}|=\sum_{i=1}^{n}\alpha_{i}|e_{i}|.
\]
By convention we set $e^{\boldsymbol{0}}=1$ where $\boldsymbol{0}=(0,\dots,0)$
is the zero vector in $\mathbb{N}^{n}$. Note how the ordering in
(\ref{eq:monomial}) matters. In particular, if $i<j$ and both $|e_{i}|$
and $|e_{j}|$ are odd, then $e_{j}e_{i}$ is not a monomial in $K[\boldsymbol{e}]$
since it can be expressed as a non-trivial coefficient times a monomial:
\[
e_{j}e_{i}=-e_{i}e_{j}.
\]
On the other hand, if one of the $e_{i}$ or $e_{j}$ is even, then
$e_{j}e_{i}$ is a monomial in $K[\boldsymbol{e}]$ since $e_{j}e_{i}=e_{i}e_{j}$.
We equip $K[\boldsymbol{e}]$ with a weighted lexicographical ordering
$>$ with respect to the weighted vector\textbf{ $w=(|e_{1}|,\dots,|e_{n}|)$
}(the notation for this monomial ordering in Singular is Wp(w)). More
specifically, given two monomials $e^{\boldsymbol{\alpha}}$ and\textbf{
$e^{\boldsymbol{\beta}}$ }in $K[\boldsymbol{e}]$, we say $e^{\boldsymbol{\beta}}>e^{\boldsymbol{\alpha}}$
if either
\begin{enumerate}
\item $|e^{\boldsymbol{\beta}}|>|e^{\boldsymbol{\alpha}}|$ or;
\item $|e^{\boldsymbol{\beta}}|=|e^{\boldsymbol{\alpha}}|$ and $\beta_{1}>\alpha_{1}$
or;
\item $|e^{\boldsymbol{\beta}}|=|e^{\boldsymbol{\alpha}}|$ and there exists
$1<j\leq n$ such that $\beta_{j}>\alpha_{j}$ and $\beta_{i}=\alpha_{i}$
for all $1\leq i<j$. 
\end{enumerate}
Given a nonzero polynoimal $f\in K[\boldsymbol{e}]$, there exists
unique $c_{1},\dots,c_{m}\in K\backslash\{0\}$ and unique $\boldsymbol{\alpha}_{1},\dots,\boldsymbol{\alpha}_{m}\in\mathbb{N}^{n}$
where $\boldsymbol{\alpha}_{i}\neq\boldsymbol{\alpha}_{j}$ for all
$1\leq i<j\leq m$ such that
\begin{equation}
f=c_{1}e^{\boldsymbol{\alpha}_{1}}+\cdots+c_{m}e^{\boldsymbol{\alpha}_{m}}=\sum c_{i}e^{\boldsymbol{\alpha}_{i}}\label{eq:termldaklb-1}
\end{equation}
The $c_{i}e^{\boldsymbol{\alpha}_{i}}$ in (\ref{eq:termldaklb-1})
are called the \textbf{terms }of $f$ and the $e^{\boldsymbol{\alpha}_{i}}$
in (\ref{eq:termldaklb-1}) are called the \textbf{monomials }of $f$.
By reindexing the $\boldsymbol{\alpha}_{i}$ if necessary, we may
assume that $e^{\boldsymbol{\alpha}_{1}}>\cdots>e^{\boldsymbol{\alpha}_{m}}$.
In this case, we call $c_{1}e^{\boldsymbol{\alpha}_{1}}$ the \textbf{lead
term }of $f$, we call $e^{\boldsymbol{\alpha}_{1}}$ the \textbf{lead
monomial }of $f$, and we call $c_{1}$ the \textbf{lead coefficient
}of $f$. We denote these, respectively, by
\[
\mathrm{LT}(f)=c_{1}e^{\boldsymbol{\alpha}_{1}},\quad\mathrm{LM}(f)=e^{\boldsymbol{\alpha}_{1}},\quad\text{and}\quad\mathrm{LC}(f)=c_{1}.
\]
The \textbf{multidegree }of $f$ is defined to be the multidegree
of its lead monomial $e^{\boldsymbol{\alpha}_{1}}$ and is denoted
$\mathrm{multideg}(f)=\boldsymbol{\alpha}_{1}$. The \textbf{total
degree }of $f$ is defined to be the maximum of the total degrees
of its monomials and is denoted
\[
\mathrm{deg}(f)=\max_{1\leq i\leq m}\{\deg(e^{\boldsymbol{\alpha}_{i}})\}.
\]
We say $f$ is \textbf{homogeneous} of homological degree $i$ if
each of its monomials is homogeneous of homological degree $i$. In
this case, we say $f$ has \textbf{homological degree $i$ }and we
denote this by $|f|=i$. 

\begin{lemma}\label{lemma} For each $1\leq i\leq j\le n$, let $f_{ij}=e_{i}e_{j}-e_{i}\star e_{j}$.
We have
\[
\mathrm{LT}(f_{ij})=e_{i}e_{j}.
\]

\end{lemma}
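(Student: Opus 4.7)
The plan is to compare the two sides of $f_{ij}=e_ie_j-e_i\star e_j$ term by term in the weighted lex order and show that the monomial $e_ie_j$ beats every monomial that can appear in $e_i\star e_j$.

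First I would unravel what $e_i\star e_j$ looks like in $K[\boldsymbol{e}]$. Since $\mu\colon F\otimes F\to F$ is a chain map of degree $0$, it preserves homological degree, so $e_i\star e_j\in F_{|e_i|+|e_j|}$. Because $F_{|e_i|+|e_j|}$ is a free $R$-module on the basis elements $e_k$ with $|e_k|=|e_i|+|e_j|$, we can write
\[
e_i\star e_j=\sum_{k}r_k\,e_k,\qquad r_k\in R,
\]
where every index $k$ in the sum satisfies $|e_k|=|e_i|+|e_j|$. Viewed inside $K[\boldsymbol{e}]$, the terms of $e_i\star e_j$ are therefore monomials of total degree $1$ (a single $e_k$), all of homological degree $|e_i|+|e_j|$.

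Next I would use the ordering assumption on the basis. Since $e_j\in F_+$, we have $|e_j|\ge 1$, so $|e_k|=|e_i|+|e_j|>|e_i|$ for every $k$ appearing in the sum. By the convention that $|e_{i'}|>|e_i|$ implies $i'>i$, each such $k$ satisfies $k>i$.

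Now I would compare the multidegrees in the weighted lex ordering. The monomial $e_ie_j$ has homological degree $|e_i|+|e_j|$, the same as each $e_k$ in the sum, so criterion (1) in the definition of $>$ cannot separate them and we pass to lex comparison of the multidegree vectors. The multidegree $\boldsymbol{\alpha}$ of $e_ie_j$ satisfies $\alpha_i\ge 1$ (with $\alpha_i=2$ if $i=j$) and $\alpha_\ell=0$ for $\ell<i$. The multidegree $\boldsymbol{\beta}^{(k)}$ of $e_k$ has $\beta^{(k)}_\ell=0$ for all $\ell\ne k$, and in particular $\beta^{(k)}_\ell=0$ for $\ell\le i$ since $k>i$. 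Hence $\boldsymbol{\alpha}$ and $\boldsymbol{\beta}^{(k)}$ agree in positions $1,\ldots,i-1$ (both zero) and differ first in position $i$, where $\alpha_i\ge 1>0=\beta^{(k)}_i$. Therefore $e_ie_j>e_k$ in the ordering for every $k$ appearing in $e_i\star e_j$, which gives $\mathrm{LT}(f_{ij})=e_ie_j$.

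There is no real obstacle: the only nontrivial input is the compatibility between the basis ordering and the homological grading, together with $|e_j|\ge 1$, which together force every basis element appearing in $e_i\star e_j$ to have index strictly greater than $i$, so that the multidegree of $e_ie_j$ dominates lexicographically.
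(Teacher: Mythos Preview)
Your proof is correct and follows essentially the same approach as the paper: both use that $\mu$ preserves homological degree to get $|e_k|=|e_i|+|e_j|$, then use the ordering assumption on the basis to conclude that the multidegree of $e_ie_j$ dominates that of $e_k$ lexicographically. The paper observes the slightly stronger fact $k>\max\{i,j\}$, but your weaker inequality $k>i$ is already sufficient for the lex comparison, so the arguments are effectively the same.
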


\begin{proof}\label{proof} If $e_{i}\star e_{j}=0$, then this is
clear, otherwise let $e_{k}$ be a monomial of $e_{i}\star e_{j}$.
Since $\star$ respects homological degree, we have $|e_{k}|=|e_{i}|+|e_{j}|=|e_{i}e_{j}|$.
It follows that $|e_{k}|>\max\{|e_{i}|,|e_{j}|\}$ since $|e_{i}|,|e_{j}|\geq1.$
This implies $k>\max\{i,j\}$ by our assumption on the ordering of
$e_{1},\dots,e_{n}$. Therefore since $|e_{i}e_{j}|=|e_{k}|$ and
$k>\max\{i,j\}$, we see that $e_{i}e_{j}>e_{k}$. \end{proof}

\subsubsection{Gröbner Basis Calculations}

Our goal is to use the theory of Gröbner bases to help us calculate
\[
F^{\mathrm{as}}=\mathrm{S}_{R}(F)\slash\mathfrak{s}(\mu)\simeq R[\boldsymbol{e}]\slash\langle\{f_{ij}\}\rangle,
\]
where $f_{ij}\in R[\boldsymbol{e}]$ are defined by
\[
f_{ij}=e_{i}e_{j}-e_{i}\star e_{j}=e_{i}e_{j}-\sum_{k}c_{ij}^{k}e_{k},
\]
where the $c_{ij}^{k}\in R$ are the entries of the matrix representation
of $\mu$ with respect to the ordered homogeneous basis $e_{1},\dots,e_{n}$.
In order to do this, we work over $K$ instead of $R$ sinc that is
where the theory of Gröbner bases works best. Thus we wish to calculate:
\[
F_{K}^{\mathrm{as}}:=F^{\mathrm{as}}\otimes_{R}K\simeq K[\boldsymbol{e}]\slash\langle\{f_{ij}\}\rangle.
\]
To this end, let $\mathcal{F}=\{f_{ij}\mid1\leq i,j\leq n\}$ and
let $\mathfrak{a}$ be the $K[\boldsymbol{e}]$-ideal generated by
$\mathcal{F}$. We wish to construct a left Gröbner basis for $\mathfrak{a}$
(which will turn out to be a two-sided Gröbner basis) via Buchberger's
algorithm using the monomial ordering described above. Suppose $f,g$
are two nonzero polynomials in $K[\boldsymbol{e}]$ with $\mathrm{LT}(f)=ce^{\boldsymbol{\alpha}}$
and $\mathrm{LT}(g)=de^{\boldsymbol{\beta}}$. Set $\boldsymbol{\gamma}=\mathrm{lcm}(\boldsymbol{\alpha},\boldsymbol{\beta})$
and define the left $\mathrm{S}$\textbf{-polynomial }of $f$ and
$g$ to be
\begin{align}
\mathrm{S}(f,g) & =e^{\boldsymbol{\gamma}-\boldsymbol{\alpha}}f\pm(c/d)e^{\boldsymbol{\gamma}-\boldsymbol{\beta}}g\label{eq:spolydef}
\end{align}
where the $\pm$ in (\ref{eq:spolydef}) is chosen to be $+$ or $-$
depending on which sign will cancel out the lead terms. We begin Buchberger's
algorithm by calculating the $\mathrm{S}$-polynomials of all pairs
of polynomials in $\mathcal{F}$. In other words, we calculate all
$\mathrm{S}$-polynomials of the form $\mathrm{S}(f_{kl},f_{ij})$
where $1\leq i,j,k,l\leq n$. Note that if $k>l$, then $f_{lk}=(-1)^{|e_{k}||e_{l}|}f_{kl}$
implies
\[
\mathrm{S}(f_{lk},f_{ij})=(-1)^{|e_{k}||e_{l}|}\mathrm{S}(f_{kl},f_{ij})=\pm\mathrm{S}(f_{ij},f_{lk}),
\]
where the last equality follows from the fact that the lead coefficient
of $f_{ij}$ and $f_{lk}$ is $\pm1$. Thus we may assume that $j\geq i$
and $l\geq k\geq i$. Obviously we have $\mathrm{S}(f_{ij},f_{ij})=0$
for each $i,j$, however something interesting happens when we calculate
the $\mathrm{S}$-polynomial of $f_{jk}$ and $f_{ij}$ where $j>i$
and then divide this by $\mathcal{F}$ (where division by $\mathcal{F}$
means taking the left normal form of $\mathrm{S}(f_{jk},f_{ij})$
with respect to $\mathcal{F}$ using the left normal form described
in \cite{GP02}). In particular, we obtain the associator $[e_{i},e_{j},e_{k}]$!
Indeed, we have
\begin{align*}
\mathrm{S}(f_{jk},f_{ij}) & =e_{i}(e_{j}e_{k}-e_{j}\star e_{k})-(e_{i}e_{j}-e_{i}\star e_{j})e_{k}\\
 & =(e_{i}\star e_{j})e_{k}-e_{i}(e_{j}\star e_{k})\\
 & =\sum_{l}c_{ij}^{l}e_{l}e_{k}-\sum_{l}c_{jk}^{l}e_{i}e_{l}\\
 & \to\sum_{l}c_{ij}^{l}e_{l}\star e_{k}-\sum_{l}c_{jk}^{l}e_{i}\star e_{l}\\
 & =(e_{i}\star e_{j})\star e_{k}-e_{i}\star(e_{j}\star e_{k})\\
 & =[e_{i},e_{j},e_{k}],
\end{align*}
where in the fourth line we did division by $\mathcal{F}$ (note that
if $[e_{i},e_{j},e_{k}]\neq0$, then $\mathrm{deg}([e_{i},e_{j},e_{k}])=1$,
so we cannot divide this anymore by $\mathcal{F}$). Next suppose
that $j>i$, $l>k$, and $j\neq k$. Then we have
\begin{align*}
\mathrm{S}(f_{kl},f_{ij}) & =e_{i}e_{j}f_{kl}-f_{ij}e_{k}e_{l}\\
 & =(e_{i}\star e_{j})e_{k}e_{l}-e_{i}e_{j}(e_{k}\star e_{l})\\
 & \to(e_{i}\star e_{j})\star(e_{k}\star e_{l})-(e_{i}\star e_{l})\star(e_{k}\star e_{l})\\
 & =0
\end{align*}
where in the third line we did division by $\mathcal{F}$. Next, suppose
that
\[
f=ce_{k}+c'e_{k'}+\cdots+c''e_{k''}\in\langle F\rangle
\]
where $c,c',c''\in R$ with $c\neq0$ and where $\mathrm{LM}(f)=e_{k}$.
Then we have
\begin{align*}
\mathrm{S}(f,f_{jk}) & =e_{j}f-cf_{jk}\\
 & =c'e_{j}e_{k'}+\cdots+c''e_{j}e_{k''}+ce_{j}\star e_{k}\\
 & \to c'e_{j}\star e_{k'}+\cdots+c''e_{j}\star e_{k''}+ce_{j}\star e_{k}\\
 & =e_{j}\star(ce_{k}+c'e_{k'}+\cdots+c''e_{k''})\\
 & =e_{j}\star f\\
 & \in\langle F\rangle
\end{align*}
where in the third line we did division by $\mathcal{F}$. Similarly,
if $i\neq k\neq j$, then we have
\begin{align*}
\mathrm{S}(f,f_{ij}) & =e_{i}e_{j}f-cf_{ij}e_{k}\\
 & =c'(e_{i}e_{j})e_{k'}+\cdots+c''(e_{i}e_{j})e_{k''}+c(e_{i}\star e_{j})e_{k}\\
 & \to c'(e_{i}\star e_{j})\star e_{k'}+\cdots+c''(e_{i}\star e_{j})\star e_{k''}+c(e_{i}\star e_{j})\star e_{k}\\
 & =(e_{i}\star e_{j})\star(ce_{k}+c'e_{k'}+\cdots+c''e_{k''})\\
 & =(e_{i}\star e_{j})\star f\\
 & \in\langle F\rangle.
\end{align*}
where in the third line we did division by $\mathcal{F}$. Finally
suppose that
\[
g=de_{m}+d'e_{m'}+\cdots+d''e_{m''}\in\langle F\rangle
\]
where $d,d',d''\in R$ with $d\neq0$ and where $\mathrm{LM}(g)=e_{m}$.
If $k=m$, then we have
\[
d\mathrm{S}(f,g)=cf-dg\in\langle F\rangle.
\]
On the other hand, if $k\neq m$, then we have
\begin{align*}
d\mathrm{S}(f,g) & =de_{m}f-cge_{k}\\
 & =dc'e_{m}e_{k'}+\cdots+dc''e_{m}e_{k''}-cd'e_{m'}e_{k}-\cdots-cd''e_{m''}e_{k}\\
 & \to dc'e_{m}\star e_{k'}+\cdots+dc''e_{m}\star e_{k''}-cd'e_{m'}\star e_{k}-\cdots-cd''e_{m''}\star e_{k}\\
 & =de_{m}\star(c'e_{k'}+\cdots+c''e_{k''})-c(d'e_{m'}+\cdots+d''e_{m''})\star e_{k}\\
 & =de_{m}\star(f-ce_{k})-c(g-de_{m})\star e_{k}\\
 & =de_{m}\star f+cg\star e_{k}-dce_{m}\star e_{k}+cde_{m}\star e_{k}\\
 & =de_{m}\star f+cg\star e_{k}\\
 & \in\langle F\rangle.
\end{align*}
It follows that we can construct a Gröbner basis
\[
\mathcal{G}:=\mathcal{F}\cup\{g_{1},\dots,g_{m}\}
\]
of $\mathfrak{a}$ such that the $g_{i}$ all belong to $\langle F\rangle$.

\begin{example}\label{example455} Let $R=\Bbbk[x,y,z,u,v]$, let
$\boldsymbol{m}=zv,yv,uv,xv,xu,yzu$, and let $F$ be the minimal
free resolution of $R\slash\boldsymbol{m}$ over $R$. Then $F$ can
be realized as the $R$-complex supported on the $\boldsymbol{m}$-labeled
cellular complex pictured below: \begin{center}
\begin{tabular}{cccc}
\begin{tikzpicture}[scale=1]

\draw[fill=gray!20] (0,0) -- (3,-0.5) -- (5,0.2) -- (3.2,1.2)-- (0,0); 
\draw[fill=gray!20] (0,0) -- (1.5,1.5) -- (3.2,1.2)-- (0,0); 
\draw[] (1.5,1.5) -- (3,-0.5); \draw[fill=gray!20] (3,-0.5) -- (5,0.2) -- (3.2,1.2);
\draw[color=black!100] (1.5,1.5) -- (3,-0.5) -- (5,0.2);
\draw[color=black!100] (0,0) -- (3.2,1.2) -- (5,0.2);
\draw[fill=gray!40] (1.5,1.5) -- (3.2,1.2) -- (5,0.2)-- (3.5,2.5) -- (1.5,1.5); 

\node[circle, fill=black, inner sep=1pt, label=left:$e _1 $] (a) at (0,0) {};
\node[circle, fill=black, inner sep=1pt, label=above left:$e _2 $] (b) at (1.5,1.5) {};
\node[circle, fill=black, inner sep=1pt, label=below:$e _3 $] (c) at (3,-0.5) {};
\node[circle, fill=black, inner sep=1pt, label=above:$e _4 $] (d) at (3.2,1.2) {}; 
\node[circle, fill=black, inner sep=1pt, label=right:$e _5 $] (e) at (5,0.2) {};
\node[circle, fill=black, inner sep=1pt, label=above:$e_6 $] (f) at (3.5,2.5) {};

\draw[] (b) -- (f);
\draw (e) -- (f);

\end{tikzpicture} &  &  & \begin{tikzpicture}[scale=1]

\draw[fill=gray!20] (0,0) -- (3,-0.5) -- (5,0.2) -- (3.2,1.2)-- (0,0); 
\draw[fill=gray!20] (0,0) -- (1.5,1.5) -- (3.2,1.2)-- (0,0); 
\draw[] (1.5,1.5) -- (3,-0.5); \draw[fill=gray!20] (3,-0.5) -- (5,0.2) -- (3.2,1.2);
\draw[color=black!100] (1.5,1.5) -- (3,-0.5) -- (5,0.2);
\draw[color=black!100] (0,0) -- (3.2,1.2) -- (5,0.2);
\draw[fill=gray!40] (1.5,1.5) -- (3.2,1.2) -- (5,0.2)-- (3.5,2.5) -- (1.5,1.5); 

\node[circle, fill=black, inner sep=1pt, label=left:$zv $] (a) at (0,0) {};
\node[circle, fill=black, inner sep=1pt, label=above left:$yv $] (b) at (1.5,1.5) {};
\node[circle, fill=black, inner sep=1pt, label=below:$uv $] (c) at (3,-0.5) {};
\node[circle, fill=black, inner sep=1pt, label=above:$xv $] (d) at (3.2,1.2) {}; 
\node[circle, fill=black, inner sep=1pt, label=right:$xu $] (e) at (5,0.2) {};
\node[circle, fill=black, inner sep=1pt, label=above:$yzu $] (f) at (3.5,2.5) {};

\draw[] (b) -- (f);
\draw (e) -- (f);

\end{tikzpicture}\tabularnewline
\end{tabular}
\par\end{center}We write down the homogeneous components of $F$ as a graded module
below:
\begin{align*}
F_{0} & =R\\
F_{1} & =Re_{1}+Re_{2}+Re_{3}+Re_{4}+Re_{5}+Re_{6}\\
F_{2} & =Re_{12}+Re_{13}+Re_{14}+Re_{23}+Re_{24}+Re_{26}+Re_{35}+Re_{45}+Re_{56}\\
F_{3} & =Re_{123}+Re_{124}+Re_{1345}+Re_{2345}+Re_{2456}\\
F_{4} & =Re_{12345}
\end{align*}
We will use Singular to help us find an associative multigraded multiplication
$\mu$ on $F$ such that $e_{\sigma}^{2}=0$ for all $\sigma$. From
multidegree and Leibniz rule considerations, we begin constructing
$\mu$ as follows:
\begin{align*}
e_{1}\star e_{2} & =ve_{12} & e_{3}\star e_{5} & =ue_{35}\\
e_{1}\star e_{3} & =ve_{13} & e_{3}\star e_{6} & =-zue_{23}+ue_{26}\\
e_{1}\star e_{4} & =ve_{14} & e_{4}\star e_{5} & =xe_{45}\\
e_{1}\star e_{5} & =ue_{14}+ze_{45} & e_{4}\star e_{6} & =-zue_{24}+xe_{26}\\
e_{1}\star e_{6} & =zue_{12}+ze_{26} & e_{5}\star e_{6} & =ue_{56}\\
e_{2}\star e_{3} & =ve_{23} & e_{1}\star e_{23} & =ve_{123}\\
e_{2}\star e_{4} & =ve_{24} & e_{1}\star e_{24} & =ve_{124}\\
e_{2}\star e_{5} & =ue_{24}+ye_{45} & e_{1}\star e_{35} & =-ve_{1345}\\
e_{2}\star e_{6} & =ye_{26} & e_{1}\star e_{56} & =-uze_{124}+ze_{2456}\\
e_{3}\star e_{4} & =ve_{35}-ve_{45} & e_{1}\star e_{2345} & =ve_{12345}.
\end{align*}
At this point, Singular can help us determine how we should define
$\mu$ everywhere else. First we input the following code into Singular:\begin{center}
\begin{tabular}{c}
\begin{lstlisting}
LIB "ncalg.lib";

intvec V = 1:6, 2:9, 3:5, 4:1;
 
ring A=(0,x,y,z,u,v),(e1,e2,e3,e4,e5,e6,
e12,e13,e14,e23,e24,e26,e35,e45,e56,
e123,e124,e1345,e2345,e2456,e12345),Wp(V);

matrix C[21][21]; matrix D[21][21]; int i; int j;  
for (i=1; i<=21; i++) {for (j=1; j<=21; j++) {C[i,j]=(-1)^(V[i]*V[j]);}}  
ncalgebra(C,D);  

poly f(1)(2) = e1*e2 - v*e12; 
poly f(1)(3) = e1*e3 - v*e13; 
poly f(1)(4) = e1*e4 - v*e14; 
poly f(1)(5) = e1*e5 - u*e14 - z*e45; 
poly f(1)(6) = e1*e6 - zu*e12 - z*e26; 
poly f(2)(3) = e2*e3 - v*e23; 
poly f(2)(4) = e2*e4 - v*e24; 
poly f(2)(5) = e2*e5 - u*e24 - y*e45; 
poly f(2)(6) = e2*e6 - y*e26; 
poly f(3)(4) = e3*e4 - v*e35 + v*e45;   
poly f(3)(5) = e3*e5 - u*e35;   
poly f(3)(6) = e3*e6 + zu*e23 - u*e26; 
poly f(4)(5) = e4*e5 - x*e45;   
poly f(4)(6) = e4*e6 + zu*e24 - x*e26; 
poly f(5)(6) = e5*e6 - u*e56;     
poly f(1)(23) = e1*e23 - v*e123;   
poly f(1)(24) = e1*e24 - v*e124;  
poly f(1)(35) = e1*e35 + v*e1345;  
poly f(1)(56) = e1*e56 + uz*e124 - z*e2456; 
poly f(1)(2345) = e1*e2345 - v*e12345; 

list L =  (e1,e2,e3,e4,e5,e6,
e12,e13,e14,e23,e24,e26,e35,e45,e56,
e123,e124,e1345,e2345,e2456,e12345);  

ideal I; int i; for (i=1; i<=21; i++) {I = I + L[i]*L[i];} 

I = I + f(1)(2),f(1)(3),f(1)(4),f(1)(5),f(1)(6),f(2)(3),f(2)(4),
f(2)(5),f(2)(6),f(3)(4),f(3)(5),f(3)(6),f(4)(5),f(4)(6),
f(5)(6),f(1)(23),f(1)(24),f(1)(35),f(1)(56),f(1)(2345); 
\end{lstlisting}
\tabularnewline
\end{tabular}
\par\end{center} To see that the multiplication is associative thus far, we calculate
the Gröbner basis of $I$ with respect to our fixed monomial ordering
using the command \lstinline!std(I)! in Singular. Singular gives
us the following output:\begin{center}
\begin{tabular}{c}
\begin{lstlisting}
_[1]=e6^2 
_[2]=e5*e6+(-u)*e56 
_[3]=e5^2 
...
_[57]=e2*e56+(-y)*e2456
_[58]=e2*e45
_[59]=(z*u)*e2*e35+(-v)*e6*e35+(u*v)*e2456
_[60]=e2*e26
...
_[209]=e124*e12345 
_[210]=e123*e12345 
_[211]=e12345^2
\end{lstlisting}
\tabularnewline
\end{tabular}
\par\end{center}where we omitted most of the Gröbner basis elements due to size constraints.
Since the lead term of each polynomial showing up in the list has
total degree $>1$, we conclude that the multiplication we have defined
so far is associative. Now observe that if we want the multiplication
to continue being associative, then we need to define $e_{2}\star e_{26}=0$
since
\begin{align*}
ye_{2}\star e_{26} & =e_{2}\star(e_{2}\star e_{6})\\
 & =(e_{2}\star e_{2})\star e_{6}-[e_{2},e_{2},e_{6}]\\
 & =-[e_{2},e_{2},e_{6}].
\end{align*}
In fact, Singular already tells us this since it is computing the
maximal associative quotient! In particular, setting \lstinline!I = std(I)!
and running the command \lstinline!reduce(e2*e26 , I)! outputs \lstinline!0!
in Singular which tells us that in the maximal associative quotient
we have $e_{1}\star e_{12}=0$. Alternatively, we could simply read
this off the list of polynomials that Singular outputted as the polynomial
$e_{2}\star e_{26}$ shows up in the Gröbner basis. Similarly, Singular
tells us that we should define $e_{2}\star e_{56}=-ye_{2456}$ since
the polynomial $e_{2}\star e_{56}-ye_{2456}$ shows up in the Gröbner
basis. On the other hand, if we run the command \lstinline!reduce(e6*e35 , I)!,
then Singular outputs \lstinline!e6*e35! which tells us that we still
need to define $e_{6}\star e_{35}$. Upon reflection of the multigrading
and Leibniz rule, we define
\[
e_{6}\star e_{35}=-zue_{2345}+ue_{2456}.
\]
Thus we add the polynomial \lstinline!poly f(6)(35) = e6*e35 + zu*e2345 - y*e2456!
to our ideal in the code. We observe that our multiplication is still
associative by running the command \lstinline!std(I)! and checking
that none of the polynomials listed has lead term of total degree
$1$ again. Furthermore, running the command\begin{center}
\begin{tabular}{c}
\begin{lstlisting}
for(i=1;i<=21;i++){for(j=i+1;j<=21;j++){reduce(L[i]*L[j],I);};};
\end{lstlisting}
\tabularnewline
\end{tabular}
\par\end{center}shows that the multiplication is now defined everywhere. For instance,
the command \lstinline!reduce(e12*e35 , I)! outputs \lstinline!(-v)*e12345!.
This tells us that $e_{12}\star e_{35}=-ve_{12345}$. \end{example}

\begin{example}\label{example} In Example~(\ref{example1}) we calculate
the associator $[e_{1},e_{5},e_{2}]$ using the following Singular
code:\begin{center}
\begin{tabular}{c}
\begin{lstlisting}
LIB "ncalg.lib"; 

intvec V = 1:3, 2:5, 3:5; 

ring A=(0,x,y,z,w),(e1,e2,e5,
e12,e14,e23,e35,e45,
e123,e124,e134,e234,e345),Wp(V);

matrix C[13][13]; matrix D[13][13]; int i; int j; 
for (i=1; i<=13; i++) {for (j=1; j<=13; j++) {C[i,j]=(-1)^(V[i]*V[j]);}} 
ncalgebra(C,D);

poly f(1)(2) = e1*e2-e12; 
poly f(1)(5) = e1*e5-yz2*e14-x*e45;  
poly f(2)(5) = e2*e5-y2z*e23-w*e35;  
poly f(2)(45) = e2*e45+yz*e234-w*e345;  
poly f(1)(35) = e1*e35-yz*e134+x*e345;  
poly f(1)(23) = e1*e23-e123;  
poly f(2)(14) = e2*e14+e124;
poly S(1)(5)(2) = f(1)(5)*e2+e1*f(2)(5);  

ideal I = f(1)(2),f(1)(5),f(2)(5),f(1)(23),f(1)(35),f(2)(14),f(2)(45);
reduce(S(1)(5)(2),I); 

// [e1,e5,e2] = (y^2*z)*e123-(y*z^2)*e124+(y*z*w)*e134-(x*y*z)*e234
\end{lstlisting}
\tabularnewline
\end{tabular}
\par\end{center} \end{example}

\section*{Acknowledgements}

First and foremost, I would like to express my deepest gratitude to
Keri Sather-Wagstaff, my PhD advisor, for her invaluable guidance,
continuous support, and the confidence she placed in me. Her regular
Zoom meetings, insightful discussions, and unwavering dedication have
been instrumental in shaping my research. Her role in my academic
journey cannot be overstated, and I am immensely thankful for her
mentorship. I am also deeply thankful to Dr. Saeed Nasseh, my advisor
during my Master's program. His initial recommendation was a pivotal
point in my academic career, leading me to pursue my PhD under Dr.
Sather-Wagstaff's guidance. His ongoing support and insightful feedback
on my paper have been incredibly helpful, especially his suggestions
for edits and improvements for peer review. Lastly, I extend my gratitude
to Keller VandeBogert. His thorough review and constructive feedback
on my paper were exceptionally beneficial. This journey would not
have been possible without the support and encouragement from each
of these individuals. I am deeply appreciative of their contributions
to my academic and personal growth.

\newpage{}

\end{document}